\theoremstyle{plain}
\newtheorem{thrm}{Theorem}
\newtheorem{lmm}[thrm]{Lemma}
\newtheorem{prpstn}[thrm]{Proposition}
\newtheorem*{prpstn*}{Proposition}
\newtheorem*{thm*}{Theorem}
\newtheorem*{lmm*}{Lemma}
\newtheorem*{lmmA1*}{Lemma A1}
\newtheorem*{lmmA2*}{Lemma A2}
\newtheorem*{sblmm*}{Sublemma}
\newtheorem{cndtn}{Condition}
\numberwithin{equation}{section}
\DeclareMathOperator{\Li}{Li}
\begin{document}
\title{On the Brun-Titchmarsh theorem}
\author{James Maynard}
\address{Mathematical Institute, 24–-29 St Giles', Oxford, OX1 3LB}
\email{maynard@math.ox.ac.uk}
\thanks{Supported by EPSRC Doctoral Training Grant EP/P505216/1 }
\date{}
\subjclass[2010]{11N13, 11N05, 11M06, 11M20}
\begin{abstract}
The Brun-Titchmarsh theorem shows that the number of primes which are less than $x$ and congruent to $a\pmod{q}$ is less than $(C+o(1))x/(\phi(q)\log{x})$ for some value $C$ depending on $\log{x}/\log{q}$. Different authors have provided different estimates for $C$ in different ranges for $\log{x}/\log{q}$, all of which give $C>2$ when $\log{x}/\log{q}$ is bounded. We show that one can take $C=2$ provided that $\log{x}/\log{q}\ge8$ and $q$ is sufficiently large. Moreover, we also produce a lower bound of size $x/(q^{1/2}\phi(q))$ when $\log{x}/\log{q}\ge 8$ and is bounded. Both of these bounds are essentially best-possible without any improvement on the Siegel zero problem.
\end{abstract}
\maketitle
\section{Introduction}
We let $\pi(x;q,a)$ denote the number of primes less than or equal to $x$ which are congruent to $a\pmod{q}$, for some real $x>0$ and positive coprime integers $a,q$. It is a classical theorem of Walfisz \cite{Walfisz} based on the work of Siegel that, for any fixed $N>0$, uniformly for $q\le (\log{x})^N$ and $(a,q)=1$, as $x\rightarrow \infty$ we have
\begin{equation}\label{SiegelWalfisz}
\pi(x;q,a)\sim \frac{x}{\phi(q)\log{x}}.
\end{equation}
It is generally believed that this asymptotic holds in a much wider range of $q$. If we assume the generalised Riemann Hypothesis (GRH), then the asymptotic \eqref{SiegelWalfisz} holds uniformly in the much larger range $q\le x^{1/2-\delta}$ for any fixed $\delta>0$. Montgomery \cite{MontgomeryBook} has conjectured that the asymptotic holds uniformly in the even larger range $q\le x^{1-\delta}$.

Any improvement in the range of $q$ for which the asymptotic holds would exclude the possibility of the existence of zeros of Dirichlet $L$-functions in certain regions, but unfortunately such a result seems beyond our current techniques. Without this type of improvement, however, we cannot hope to prove results stronger than
\begin{equation}o\left(\frac{x}{\phi(q)\log{x}}\right)\le \pi(x;q,a)\le \frac{2x}{\phi(q)\log{x}}\end{equation}
when $\log{x}/\log{q}$ is bounded.

Linnik \cite{Linnik1}, \cite{Linnik2} gave a non-trivial lower bound for $\pi(x;q,a)$ for a wider range of $q$. He showed that there is a constant $L>0$ such that, whenever $x>q^L$ and $q$ is sufficiently large there is at least one prime in the arithmetic progression $\{n\le x:n\equiv a\pmod{q}\}$ for any $a$ with $(a,q)=1$. Pan \cite{Pan} showed that one can take $L\le 10,000$. This has subsequently been improved by many authors including (in chronological order) Chen \cite{Chen1}, Jutila \cite{Jutila1}, Chen \cite{Chen2}, Jutila \cite{Jutila},Chen \cite{Chen3}, Graham \cite{Graham1}, Wang \cite{Wang}, Chen and Liu \cite{ChenLiu}, and Heath-Brown \cite{HB}. The best known result is due to Xylouris \cite{Xylouris}, which shows that we can take $L=5.2$.

Titchmarsh \cite{Titchmarsh} used Brun's sieve to show that for $q< x$ we have the upper bound
\begin{equation}
\pi(x;q,a)\ll \frac{x}{\phi(q)\log{(x/q)}}.
\end{equation}
The implied constant can be made explicit, and has been estimated by various authors. The strongest result of this type which holds for all ranges of $q$ is due to Montgomery and Vaughan \cite{MontgomeryVaughan}, who used the large sieve to obtain the following result.
\begin{prpstn*}[Brun-Titchmarsh Theorem]
For $x>q$ we have
\[\pi(x;q,a)\le \left(\frac{2}{1-\log{q}/\log{x}}\right)\frac{x}{\phi(q)\log{x}}.\]
\end{prpstn*}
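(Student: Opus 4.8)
The plan is to derive the Brun–Titchmarsh bound from the large sieve inequality. Recall the large sieve in its standard analytic form: for any complex numbers $a_n$ supported on $M < n \le M+N$ and any $Q\ge 1$,
\[
\sum_{q\le Q}\ \sum_{\substack{\chi \bmod q\\ \chi\ \mathrm{primitive}}}\Bigl|\sum_{n} a_n\chi(n)\Bigr|^2 \le (N+Q^2)\sum_n |a_n|^2,
\]
or equivalently, after the standard deduction that converts characters into additive characters at Farey fractions, the sieve inequality: if $\mathcal{A}\subseteq\{M+1,\dots,M+N\}$ omits $\omega(p)$ residue classes mod $p$ for each prime $p\le\sqrt{Q}$, then
\[
|\mathcal{A}| \le \frac{N+Q}{\sum_{q\le \sqrt Q}\mu^2(q)\prod_{p\mid q}\frac{\omega(p)}{p-\omega(p)}}.
\]
First I would set this up with $\mathcal{A}$ the set of primes in $(q, x]$ lying in the residue class $a\bmod q$ (discarding the at most one prime $\le q$ contributes a negligible error, and in fact one can just take the interval $(0,x]$). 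For each prime $p\nmid q$ we have $\omega(p)=1$ (the prime class $0\bmod p$ is excluded, since our elements are primes $>p$ eventually), and for $p\mid q$ we have $\omega(p)=1$ as well but with the extra constraint coming from the progression; the cleanest route is to sieve the sequence $n\equiv a\pmod q$, $n\le x$, which has $N \approx x/q$ terms, by the primes $p\le z:=\sqrt{Q/q}$ not dividing $q$, removing the class $n\equiv 0$.

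**The key computation** is then to lower-bound the denominator sum $S(z):=\sum_{\substack{d\le z\\(d,q)=1}}\mu^2(d)\prod_{p\mid d}\frac{1}{p-1}=\sum_{\substack{d\le z\\(d,q)=1}}\frac{\mu^2(d)}{\phi(d)}$. Using the classical fact that $\sum_{d\le z}\mu^2(d)/\phi(d) = \log z + O(1)$, and more precisely its coprime-to-$q$ variant $\sum_{(d,q)=1}\mu^2(d)/\phi(d)\ge (\phi(q)/q)(\log z + c)$, one gets $S(z)\gtrsim (\phi(q)/q)\log z$. Choosing the large-sieve cutoff so that $N+Q = x/q + Q$ is balanced, i.e. taking $Q \asymp x/q$ so that $z=\sqrt{Q/q}\asymp \sqrt{x}/q$... — here one must be careful: the optimal choice is $z$ with $qz^2 \le x$, i.e. $z\le \sqrt{x/q}$... actually in Montgomery–Vaughan's sharp form one takes the sieving range so that $\log z$ is as large as possible subject to $N \ge $ the modulus squared, giving $\log z = \tfrac12\log(x/q)=\tfrac12(\log x - \log q)$. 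Putting it together,
\[
\pi(x;q,a) \le \frac{x/q + Q}{(\phi(q)/q)\log z} \approx \frac{2x/q}{(\phi(q)/q)(\log x - \log q)} = \frac{2x}{\phi(q)(\log x - \log q)} = \Bigl(\frac{2}{1-\log q/\log x}\Bigr)\frac{x}{\phi(q)\log x},
\]
which is exactly the claimed bound.

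**The main obstacle** is obtaining the constant $2$ rather than some larger constant: a naive balancing of $N+Q$ loses a factor, and one must instead use the refined large sieve in the asymmetric form where the "$N$" term dominates and the length of the sieve (hence $\log z$) is pushed right up to $\tfrac12\log(x/q)$ without the $+Q$ term degrading it — this is precisely where Montgomery and Vaughan's optimization of the duality/weights in the large sieve is used, rather than the crude $(N+Q)$ bound. A secondary technical point is handling the Euler factors at primes $p\mid q$ and the small primes $p\le z$ with $p\mid a$-type degeneracies correctly so that the denominator is genuinely $(\phi(q)/q)\log z + O(1)$ with the right constant; and one should confirm the bound is stated for \emph{all} $q<x$ uniformly, which forces care at the edge $q$ close to $x$ where $\log(x/q)\to 0$ and the bound correctly blows up. I expect the sieve set-up and the Euler-product estimate to be routine; the genuinely delicate part, and the reason this particular constant is attributed to Montgomery and Vaughan, is the sharp form of the large sieve inequality that makes $C=2$ attainable.
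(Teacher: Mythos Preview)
The paper does not prove this statement at all: it is quoted in the introduction as a known result of Montgomery and Vaughan \cite{MontgomeryVaughan}, and is used as input (for instance in Section~\ref{sec:Siegel} and in the partial-summation reduction after Proposition~\ref{Prpstn:OverallResult}) rather than established. So there is no proof in the paper to compare against.

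Your outline is nonetheless the right one, and it is indeed the Montgomery--Vaughan argument. A couple of remarks on the sketch itself. First, the version of the large sieve you wrote down, with numerator $N+Q$ (or $N+Q^2$ in the character form), only yields $\pi(x;q,a)\le \frac{2x}{\phi(q)\log(x/q)}$ with an additional $O$-term, or with a worse constant if one wants it clean; the Montgomery--Vaughan contribution was precisely to remove that defect via a weighted Hilbert inequality (their Theorem~2), and this is not a matter of ``balancing $N+Q$'' but a genuinely sharper operator bound. You correctly flag this as the delicate point, but your write-up should not present the standard $(N+Q)$ inequality and then claim the constant~$2$ falls out of it --- it does not. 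Second, the lower bound you need for $\sum_{\substack{d\le z\\(d,q)=1}}\mu^2(d)/\phi(d)$ is not merely $\ge (\phi(q)/q)\log z$ asymptotically but exactly $\ge (\phi(q)/q)\log z$ for all $z\ge 1$; this elementary but non-obvious inequality (sometimes attributed to van Lint--Richert or found in Montgomery--Vaughan) is what keeps the constant from degrading. With those two ingredients made precise, the calculation you wrote at the end is correct.
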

The constant $2/(1-\log{q}/\log{x})$ of the Brun-Titchmarsh theorem should be compared with the constant $1+o(1)$ which Montgomery conjectures.

Since it appears unlikely that we can prove an upper bound with a constant less than 2 with the current techniques, any improvements are likely to reduce the factor $1/(1-\log{q}/\log{x})$. Several authors including Motohashi \cite{MotohashiI}, Goldfeld \cite{Goldfeld}, Iwaniec \cite{Iwaniec1} and Iwaniec and Friedlander \cite{Iwaniec2}  have made improvements of this type for different ranges of $q$. If we put
\begin{equation}
\theta=\frac{\log{q}}{\log{x}},
\end{equation}
then we have
\begin{equation}
\pi(x;q,a)\le \frac{(C+o(1))x}{\phi(q)\log{x}},
\end{equation}
where
\begin{equation}
C=
\begin{cases}
(2-((1-\theta)/4)^6)/(1-\theta),\qquad &2/3\le \theta,\nonumber\\
8/(6-7\theta),&9/20\le\theta\le 2/3,\\
16/(8-3\theta),&\theta\le 9/20.\nonumber
\end{cases}
\end{equation}
This improves the Brun-Titchmarsh bound of $C= 2/(1-\theta)$ slightly throughout the entire range of $q$. We note that in all cases we still have $C>2$ for $\theta>0$.

It has been known as a folklore amongst specialists that for $\theta$ less than some fixed constant we should be able to take $C=2$. In this paper we establish this, and give a quantitative bound for the range when this happens. We show that provided $q$ is sufficiently large we can take
\[C=2\qquad \text{if }\theta\le 1/8.\]
\section{Notation}
We will let $p$ represent a generic prime. We will consider the arithmetic progression where all terms are $\le x$ and are congruent to $a\pmod{q}$. We will assume that $q$ is larger than some fixed constant throughout, and so may not explicitly say that we are assuming $q$ to be sufficiently large for a given statement to hold. $\chi$ will refer to a Dirichlet character (mod $q$) and $\chi_0$ the principal character.

For the purposes of this paper we shall define an `$\eta$-Siegel zero' to be a real zero $\rho$ of a Dirichlet $L$-function $L(s,\chi)$ which lies in the region
\[1-\frac{\eta}{\log{q}}\le \Re(\rho)\le 1.\]
\section{Main Result}
We improve on the Brun-Titchmarsh constant for some range of $q$. Instead of using sieve methods to count primes in arithmetic progressions we will use the analytic techniques developed in the estimation of Linnik's constant.

In Linnik's theorem one counts primes with a smooth weight, and estimating this requires estimating corresponding weighted sums over the zeros of Dirichlet $L$-functions. In the case of Linnik's theorem only zeros of the form $\rho=1+O(1/\log{q})$ make a significant contribution. In this paper we wish to count primes weighted by the characteristic function of the interval $[0,x]$, however, and this means we must consider all zeros $\rho=\beta+i\gamma$ with $\gamma\ll 1$ in the corresponding weighted sums over zeros. Thus the zero density estimates of Heath-Brown \cite{HB} are insufficient, and we need to extend them to this larger range.

\begin{thrm}\label{thrm:MainTheorem} There exists an effectively computable constant $q_1$, such that for $q\ge q_1$ and $x\ge q^8$ we have
\[\pi(x;q,a)< \frac{2\Li(x)}{\phi(q)}.\]
\end{thrm}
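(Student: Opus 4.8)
The plan is to count the prime powers in the progression with an analytic weight, reduce via orthogonality and the explicit formula to a sum over zeros of the $L(s,\chi)$, $\chi\bmod q$, and control that sum with the zero‑free region, a zero‑density estimate, and the Deuring--Heilbronn phenomenon — i.e.\ exactly the machinery behind Linnik's constant, but carried out for all zeros of bounded height rather than only those within $O(1/\log q)$ of $s=1$. Choose a smooth $F\ge\mathbf 1_{[1,x]}$ supported in $[1,x(1+\delta)]$ with transition scale $\delta=q^{-\eta}$ for a small fixed $\eta>\tfrac12$, so that its Mellin transform satisfies $\widehat F(1)=x(1+o(1))$, $\widehat F(\sigma+it)\ll x^{\sigma}$ uniformly, and $\widehat F(\sigma+it)$ decays rapidly once $|t|\gg\delta^{-1}=:T$. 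Then, up to a negligible contribution from trivial zeros,
\[
\psi(x;q,a)\;\le\;\sum_{n\equiv a\,(q)}\Lambda(n)F(n)
=\frac1{\phi(q)}\Big(\widehat F(1)-\sum_{\chi\ne\chi_0}\bar\chi(a)\sum_{\rho}\widehat F(\rho)\Big),
\]
the inner sums over the non‑trivial zeros $\rho=\beta+i\gamma$ of $L(s,\chi)$; after passing to $\pi$ the term $\widehat F(1)/\phi(q)$ becomes the main term $(1+o(1))\Li(x)/\phi(q)$.

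Next isolate a possible exceptional zero. By the classical zero‑free region there is, for a suitable fixed $\lambda_0>0$, at most one zero of any $L(s,\chi)$, $\chi\bmod q$, with $\beta>1-\lambda_0/\log q$ and $|\gamma|\le T$; if it exists it is real and simple, say $\beta_1=1-\lambda_1/\log q$ with $0<\lambda_1<\lambda_0$, attached to a real character $\chi_1$. The hypothesis $x\ge q^8$ enters here decisively: $x^{\beta_1}=x\exp(-\lambda_1\log x/\log q)\le xe^{-8\lambda_1}$, so $|\widehat F(\beta_1)|\le(1+o(1))\,x^{\beta_1}/\beta_1\le(1-c\lambda_1)\,x$ for $\lambda_1$ small. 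Hence the combined contribution of $\chi_0$ and of $\beta_1$ to $\psi(x;q,a)$ is at most $(2-c\lambda_1+o(1))x/\phi(q)$, already strictly below $2x/\phi(q)$; everything rests on showing the remaining zeros do not consume this margin.

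For the remaining zeros I would use a log‑free zero‑density estimate $\sum_{\chi\bmod q}N(\sigma,T,\chi)\ll(qT)^{c_1(1-\sigma)}$ valid for all $|\gamma|\le T$, with $c_1$ small enough that $c_1\log(qT)/\log x<1$ throughout $\theta\le\tfrac18$ — this is precisely the input that has to be pushed beyond Heath‑Brown's range. If no exceptional zero is present (or $\lambda_1\ge\lambda_0$), every zero has $\beta\le1-\lambda_0/\log q$, so $x^{\beta}\le xe^{-8\lambda_0}$, and the density estimate bounds the full zero sum by $\varepsilon_0x/\phi(q)$ with $\varepsilon_0<1$ absolute, giving $\psi(x;q,a)\le(1+\varepsilon_0+o(1))x/\phi(q)$ with room to spare. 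If an exceptional zero with small $\lambda_1$ is present, Deuring--Heilbronn forces every other zero to satisfy $\beta\le1-c_2\log(c_3/\lambda_1)/\log(qT)$; the same computation then bounds the remaining zero sum by $\ll x\lambda_1^{M}/\phi(q)$ for a suitable fixed $M>1$, which for $q$ large is dominated by the saving $c\lambda_1x/\phi(q)$ coming from the $\beta_1$‑term, so again $\psi(x;q,a)\le(2-c'\lambda_1+o(\lambda_1))x/\phi(q)$.

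Finally convert $\psi$ to $\pi$. Using $\vartheta(x;q,a)=\psi(x;q,a)+O(x^{1/2}\log x)$ and $\pi(x;q,a)=\vartheta(x;q,a)/\log x+\int_2^x\vartheta(t;q,a)\,t^{-1}(\log t)^{-2}\,dt+O(1)$, one cannot afford to bound the integral by Montgomery--Vaughan (that loses a constant factor); instead feed in the explicit‑formula expansion of $\vartheta(t;q,a)$, whereupon the integral reassembles precisely into the secondary terms of $\Li(x)$ and, when the exceptional zero is present, of $\Li(x^{\beta_1})$. The outcome is $\pi(x;q,a)\le\phi(q)^{-1}(\Li(x)+\Li(x^{\beta_1})+\mathcal E)$ (omitting the $\Li(x^{\beta_1})$ term if there is no exceptional zero), with $\mathcal E$ smaller than $\Li(x)-\Li(x^{\beta_1})\ge(1-e^{-\lambda_1/\theta})\,x/\log x>0$; since $\beta_1<1$ strictly, this yields $\pi(x;q,a)<2\Li(x)/\phi(q)$ once $q$ exceeds an effectively computable $q_1$. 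The hard part throughout is the zero‑density estimate: one needs a log‑free bound for $\sum_{\chi\bmod q}N(\sigma,T,\chi)$, uniform over the enlarged range $|\gamma|\le T$ with $T$ a small power of $q$ (so that the effective smoothing scale $\delta$ lies below the effective lower bound $\lambda_1\gg q^{-1/2}/\log q$ for the Siegel zero), and with an exponent small enough that the $x\ge q^8$ hypothesis turns it into a genuine saving; the Deuring--Heilbronn repulsion must likewise be made quantitative in that range, and the various constants balanced so that $M>1$.
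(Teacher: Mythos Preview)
Your outline matches the paper's own strategy closely: explicit formula, isolate a possible exceptional real zero, control the rest by log-free density plus Deuring--Heilbronn, then pass from $\psi$ to $\pi$. The $\psi\to\pi$ conversion in the paper is slightly different---rather than feeding the explicit formula into the partial-summation integral, it proves $|\psi(t;q,a)-t/\phi(q)|<(1-\lambda_1)t/\phi(q)$ (resp.\ $<(1-\epsilon)t/\phi(q)$) uniformly for \emph{all} $t\ge q^A$ with $A=7$ (resp.\ $7.999$), integrates that bound over $[q^A,x]$, and handles $t\in[q^2,q^A]$ by Brun--Titchmarsh---but your route would also work.

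The one place your sketch understates the difficulty is the non-Siegel case. Your assertion that ``the density estimate bounds the full zero sum by $\varepsilon_0x/\phi(q)$ with $\varepsilon_0<1$ absolute'' does not follow from a generic log-free bound $\sum_\chi N(\sigma,T,\chi)\le C_3(qT)^{c_3(1-\sigma)}$: the known exponent $c_3=12/5$ is already small enough that the sum $\sum_\rho e^{-M\lambda_\rho}$ converges at $M=8$, but the unspecified constant $C_3$ survives into the bound, and when $\lambda_1$ is only moderately small (say $\lambda_1\in[0.35,0.9]$) there is no zero-free cushion to absorb it. The paper's Sections~6--7 are devoted entirely to this point: three separate weighted zero-density lemmas with fully explicit constants, combined with a case analysis on $\lambda_1$ across roughly twenty subintervals using tabulated lower bounds on $\lambda_2$ and $\lambda_1'$ from Heath-Brown and Xylouris, to force the sum strictly below $0.99$ at $M=7.999$. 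So the hard part is not extending the \emph{range} of the density estimate or shrinking its \emph{exponent}, as your last paragraph suggests, but making the \emph{implied constants} explicit and small enough---this is the full Linnik-constant machinery, and it is where the threshold $8$ actually comes from.
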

We note that without excluding the possible existence of $\eta$-Siegel zeros for some $\eta>0$ this is the strongest possible bound which we can hope to prove for $\log{x}/\log{q}$ bounded.

We also obtain lower bounds which are essentially the strongest possible  for $\log{x}/\log{q}$ bounded without excluding the existence of an $\eta$-Siegel zero.
\begin{thrm}\label{thrm:LowerBoundEffective}
There exists an effectively computable constant $q_2$ such that for $q\ge q_2$ and $x\ge q^8$ we have
\[\frac{\log{q}}{q^{1/2}}\left(\frac{x}{\phi(q)\log{x}}\right)\ll \pi(x;q,a).\]
\end{thrm}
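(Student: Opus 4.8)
\emph{Proof plan.} The bound is trivial (and far from sharp) unless a Siegel zero is present, so the whole content is an effective form of the ``Linnik phenomenon''. First I would reduce to the function $\psi(x;q,a)$, the sum of $\Lambda(n)$ over $n\le x$ with $n\equiv a\pmod q$: the prime powers $p^k$ with $k\ge2$ contribute $O(x^{1/2}\log x)$, which is $o\bigl(q^{-1/2}x/\phi(q)\bigr)$ once $x\ge q^8$, and each prime is $\le x$, so a lower bound $\psi(x;q,a)\gg(\log q)\,q^{-1/2}\,x/\phi(q)$ yields the theorem. By orthogonality of characters and the (truncated, or lightly smoothed) explicit formula,
\[
\phi(q)\,\psi(x;q,a)=\psi(x)-\sum_{\chi\neq\chi_0}\overline{\chi}(a)\sum_{|\gamma|\le T}\frac{x^{\rho}}{\rho}+(\text{small}),
\]
the inner sum running over the nontrivial zeros $\rho=\beta+i\gamma$ of $L(s,\chi)$. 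The crucial step is to discard every zero except a possible exceptional one: combining the classical zero-free region and density estimates of Huxley/Jutila type for $|\gamma|$ large with the refined density estimates for $|\gamma|\ll1$ established above, together with the Landau--Page theorem, one shows that the joint contribution of all zeros with $\beta\le1-c/\log q$ and $|\gamma|\le T$ is $\ll x^{1-\delta}$ for a fixed $\delta>0$, \emph{precisely because} $x\ge q^8$. After this only a single real zero $\beta_1$ of a single real character $\chi_1\pmod q$ can survive. I expect this to be the main obstacle; it is essentially the estimate that already underlies Theorem~\ref{thrm:MainTheorem}.

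Now split into cases. If no such $\beta_1$ exists, or if $\chi_1(a)=-1$, we finish at once: in the first case $\psi(x;q,a)=(1+o(1))x/\phi(q)$, and in the second the exceptional term is $+\,x^{\beta_1}/(\phi(q)\beta_1)>0$ and only helps, so $\psi(x;q,a)\gg x/\phi(q)$; either way $\pi(x;q,a)\gg x/(\phi(q)\log x)$, far stronger than claimed. So assume $\chi_1(a)=1$. Then
\[
\phi(q)\,\psi(x;q,a)=x\Bigl(1-\frac{x^{\beta_1-1}}{\beta_1}\Bigr)+O(x^{1-\delta}).
\]
If $(1-\beta_1)\log x\ge1$ the bracket is $\gg1$ and we are back in the easy régime, so suppose $(1-\beta_1)\log x\le1$; then an elementary estimate gives $1-x^{\beta_1-1}/\beta_1\gg(1-\beta_1)\log x\ge8(1-\beta_1)\log q$, whence
\[
\pi(x;q,a)\gg\frac{\psi(x;q,a)}{\log x}\gg\frac{x}{\phi(q)}\,(1-\beta_1),
\]
provided the error $O(x^{1-\delta})$ is dominated by the main term — which reduces to a lower bound on $1-\beta_1$, supplied next. (When $\beta_1$ is very close to $1$ this is comfortable, since the Deuring--Heilbronn repulsion then makes the error far smaller than $x^{1-\delta}$; when $1-\beta_1\asymp1/\log q$ one only needs $x^{1-\delta}\ll x/\log q$, which is clear.)

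It remains to bound $1-\beta_1$ from below by effective, elementary means. Let $\chi_1^{*}$ be the primitive character of conductor $d\mid q$ inducing $\chi_1$; then $\beta_1$ is also a zero of $L(s,\chi_1^{*})$, and Dirichlet's class number formula — using only that the relevant class number is $\ge1$ — gives $L(1,\chi_1^{*})\gg d^{-1/2}\ge q^{-1/2}$. Together with the standard bound $1-\beta_1\gg L(1,\chi_1^{*})/(\log q)^{2}$ (coming from $|L'(\sigma,\chi_1^{*})|\ll(\log q)^{2}$ on $[\beta_1,1]$) this gives $1-\beta_1\gg q^{-1/2}$ up to a bounded power of $\log q$; feeding this back into the last display, and keeping track of the logarithmic factors (using once more $\log x\ge8\log q$), yields $\pi(x;q,a)\gg(\log q)\,q^{-1/2}\,x/(\phi(q)\log x)$, as required. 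The sharpness is exactly the statement that one cannot do better without ruling out a Siegel zero: a real $\chi_1$ with $\chi_1(a)=1$ and $\beta_1$ as close to $1$ as the class number formula permits forces $\pi(x;q,a)$ down to precisely this size.
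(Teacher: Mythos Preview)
Your overall strategy matches the paper's: reduce to $\psi$, apply the explicit formula, control the non-exceptional zeros using the estimates developed in Sections~\ref{sec:Siegel}--\ref{sec:NonSiegel} (this is Proposition~\ref{Prpstn:OverallResult}, the heart of the paper), and then convert an effective lower bound on $1-\beta_1$ into the theorem. Two points need correction, however.

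First, your claim that the non-exceptional zeros contribute $O(x^{1-\delta})$ for a \emph{fixed} $\delta>0$ is not correct and is not what the paper establishes. A zero at $\beta=1-c/\log q$ contributes $x^{1-c/\log q}=x\,e^{-cM}$ with $M=\log x/\log q$, which is no power saving. What Proposition~\ref{Prpstn:OverallResult} actually gives in the Siegel case is that the non-exceptional zeros contribute $O(\lambda_1 x)$, obtained by combining Deuring--Heilbronn with the log-free density estimate (Section~\ref{sec:Siegel}); in the non-Siegel case it gives only that the full zero sum is $\le(1-\epsilon)x$. The first of these is exactly what your argument needs: the main term $x(1-x^{\beta_1-1})\asymp\lambda_1 M x$ dominates the error $O(\lambda_1 x)$ because $M\ge7$. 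So your parenthetical about Deuring--Heilbronn is the correct mechanism, but you should state and use the bound $O(\lambda_1 x)$, not $O(x^{1-\delta})$.

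Second, and this is a genuine gap, your bound on $1-\beta_1$ is too weak by a factor $(\log q)^2$. The mean-value argument $L(1,\chi_1^{*})=(1-\beta_1)\,|L'(\xi,\chi_1^{*})|$ with $|L'|\ll(\log q)^2$ gives only $1-\beta_1\gg q^{-1/2}(\log q)^{-2}$. Feeding this into your display $\pi(x;q,a)\gg x(1-\beta_1)/\phi(q)$ yields
\[
\pi(x;q,a)\gg \frac{x}{q^{1/2}(\log q)^{2}\,\phi(q)},
\]
whereas the theorem asserts $\pi(x;q,a)\gg x\log q/(q^{1/2}\phi(q)\log x)$. At $\log x=8\log q$ these differ by a factor $(\log q)^{2}$, and no ``keeping track of the logarithmic factors'' closes this gap. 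The paper instead invokes Pintz's theorem, which gives the sharper effective bound $\lambda_1\gg(\log q)/q^{1/2}$, equivalently $1-\beta_1\gg q^{-1/2}$; this is exactly the missing $(\log q)^{2}$. Your class-number-formula input $L(1,\chi_1^{*})\gg d^{-1/2}$ is correct and is implicitly what underlies Pintz's result, but the crude derivative bound $|L'|\ll(\log q)^{2}$ wastes the logs; one needs the more careful argument of Pintz to avoid this loss.
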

\begin{thrm}\label{thrm:LowerBoundIneffective}
Let $\epsilon>0$. There exists an (ineffective) constant $q_3(\epsilon)$ such that for $q\ge q_3(\epsilon)$ and $x \ge q^{8}$ we have
\[\frac{q^{-\epsilon}x}{\phi(q)\log{x}}\ll\pi(x;q,a).\]
\end{thrm}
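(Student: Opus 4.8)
The plan is to derive the ineffective lower bound from the effective one of Theorem~\ref{thrm:LowerBoundEffective} by splitting according to whether an exceptional (Siegel) zero exists. Recall that Theorem~\ref{thrm:LowerBoundEffective} already gives, for $q\ge q_2$ and $x\ge q^8$,
\[
\pi(x;q,a)\gg \frac{\log q}{q^{1/2}}\cdot\frac{x}{\phi(q)\log x},
\]
and the proof of that theorem (being driven by the zero-density and explicit-formula machinery set up in Section~3) in fact produces the stronger conclusion $\pi(x;q,a)\gg x/(\phi(q)\log x)$ \emph{unless} there is a character $\chi\pmod q$ whose $L$-function has a real zero $\beta$ very close to $1$; the loss of $q^{-1/2+o(1)}$ in Theorem~\ref{thrm:LowerBoundEffective} comes precisely from allowing such a zero with $\beta$ as large as $1-c\log\log q/\log q$, which is all one can rule out effectively. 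So the first step is to isolate this dichotomy cleanly: either no $L(s,\chi)\pmod q$ has a real zero in $[1-\delta/\log q,1]$ for a suitable small fixed $\delta=\delta(\epsilon)>0$, in which case the explicit formula gives the full main term and we are done with room to spare; or some such zero $\beta$ exists, and we must show the contribution it subtracts is not too large.

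For the second step I would invoke Siegel's theorem in its standard ineffective form: for every $\epsilon_0>0$ there is an ineffective constant $c(\epsilon_0)>0$ such that no real primitive character $\chi\pmod q$ has $L(\beta,\chi)=0$ with $\beta>1-c(\epsilon_0)q^{-\epsilon_0}$. Feeding this into the explicit-formula expression for $\pi(x;q,a)$ (or rather its smoothed/Chebyshev-function analogue used in Section~3), the exceptional zero contributes a term of size $\ll x^{\beta}/(\phi(q)\log x)$, and since $x\ge q^8$ we have
\[
x^{\beta}\ge x^{\,1-c(\epsilon_0)q^{-\epsilon_0}}=x\cdot x^{-c(\epsilon_0)q^{-\epsilon_0}}\ge x\cdot q^{-8c(\epsilon_0)q^{-\epsilon_0}},
\]
and the factor $q^{-8c(\epsilon_0)q^{-\epsilon_0}}\to 1$ as $q\to\infty$; choosing $\epsilon_0$ small enough (say $\epsilon_0$ fixed) this is $\ge q^{-\epsilon}$ for $q$ large. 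Hence even in the exceptional case the main term minus the exceptional-zero term is $\gg q^{-\epsilon}x/(\phi(q)\log x)$, while all remaining zeros contribute an error that is $o(1)$ times the main term by the zero-density estimates established earlier in the paper.

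The third step is bookkeeping: combine the two cases, absorb the contribution of non-exceptional zeros with $\gamma\ll 1$ using the extended Heath-Brown-type density estimates referenced in Section~3, and note that the only ineffectivity introduced is through Siegel's constant $c(\epsilon_0)$, which is why $q_3(\epsilon)$ must be ineffective. I expect the main obstacle to be purely expository rather than mathematical: one must check that the proof of Theorem~\ref{thrm:LowerBoundEffective} is genuinely robust enough that replacing the effective zero-free region by Siegel's ineffective one changes nothing except the size of the single exceptional term — i.e.\ that no other step secretly used the effective quality of the zero-free region — and then track how the quantitative loss $\log q/q^{1/2}$ improves to $q^{-\epsilon}$. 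Given the way Section~3 is organised around a single potential Siegel zero, this should be a short deduction once Theorem~\ref{thrm:LowerBoundEffective} is in hand.
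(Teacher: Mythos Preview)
Your overall strategy---dichotomy on the existence of an $\eta$-Siegel zero, full main term in the non-exceptional case, and Siegel's theorem to control the exceptional case---is exactly what the paper does. The paper packages this via Proposition~\ref{Prpstn:OverallResult}: in the Siegel case one obtains $|\psi(x;q,a)-x/\phi(q)|<(1-\lambda_1)x/\phi(q)$, hence $\psi(x;q,a)>\lambda_1 x/\phi(q)$, and partial summation gives $\pi(x;q,a)\gg \lambda_1 \Li(x)/\phi(q)$. Siegel's theorem then supplies $\lambda_1\ge 2q^{-\epsilon}$ ineffectively, and the result follows in one line.

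Your second step, however, is garbled at the quantitative level. You want the main term minus the exceptional contribution to be bounded below, i.e.\ you need $x-x^\beta\gg q^{-\epsilon}x$. But your displayed chain
\[
x^{\beta}\ge x^{\,1-c(\epsilon_0)q^{-\epsilon_0}}=x\cdot x^{-c(\epsilon_0)q^{-\epsilon_0}}\ge x\cdot q^{-8c(\epsilon_0)q^{-\epsilon_0}}
\]
has both inequalities reversed: Siegel gives $\beta\le 1-c(\epsilon_0)q^{-\epsilon_0}$, so $x^\beta\le x^{1-c(\epsilon_0)q^{-\epsilon_0}}$; and $x\ge q^8$ with a negative exponent yields $x^{-c(\epsilon_0)q^{-\epsilon_0}}\le q^{-8c(\epsilon_0)q^{-\epsilon_0}}$. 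More fundamentally, even a correct lower bound $x^\beta\ge x q^{-\epsilon}$ would be useless here: it says $x^\beta$ is \emph{close} to $x$, which makes $x-x^\beta$ potentially tiny, the opposite of what you need. The correct computation is
\[
x-x^\beta=x\bigl(1-e^{-(1-\beta)\log x}\bigr)\gg x\,(1-\beta)\log x=\lambda_1\,\frac{\log x}{\log q}\,x\ge 8\lambda_1 x\gg q^{-\epsilon}x,
\]
the first $\gg$ using $1-e^{-t}\gg t$ for bounded $t$ and the last step by Siegel. This is precisely the paper's route, phrased there as $\pi(x;q,a)\ge \lambda_1\Li(x)/(2\phi(q))$ followed by $\lambda_1\ge 2q^{-\epsilon}$. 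So the plan is sound, but the execution of the key inequality needs to be rewritten with the signs going the right way.
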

\begin{thrm}\label{thrm:NoSiegelZero}
Assume that there exists a constant $\eta>0$ such that there are no $\eta$-Siegel zeros. Then there exists an effectively computable constant $q_4$ such that for $q\ge q_4$ and $x\ge q^8$ we have
\[\frac{x}{\phi(q)\log{x}}\ll \pi(x;q,a)< \frac{2x}{\phi(q)\log{x}}.\]
\end{thrm}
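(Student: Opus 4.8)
The plan is to obtain both inequalities by feeding the quantitative zero-free region supplied by the hypothesis into the machinery already assembled for Theorems~\ref{thrm:MainTheorem}--\ref{thrm:LowerBoundIneffective}. By Landau's theorem there is at most one real primitive character $\chi_1 \bmod q$ whose $L$-function has a real zero $\beta_1$ in the classical zero-free region; set $\beta_1 = 1/2$ if no such zero exists. Intersecting the classical zero-free region with the hypothesis shows that, for $\eta' := \min(\eta,c_0)$ with $c_0$ the classical zero-free-region constant, $L(s,\chi) \ne 0$ whenever $\Re(s) \ge 1 - \eta'/\log(q(|t|+2))$, for every $\chi \bmod q$; in particular the exceptional zero, if present, satisfies $\beta_1 \le 1 - \eta/\log q$, so that for $x \ge q^8$
\[
x^{\beta_1 - 1} \le x^{-\eta/\log q} = \exp\!\Bigl(-\eta\,\frac{\log x}{\log q}\Bigr) \le e^{-8\eta} < 1 .
\]

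For the upper bound I would note that the argument proving Theorem~\ref{thrm:MainTheorem} in fact delivers, or is readily adapted to deliver, the sharper inequality $\pi(x;q,a) \le \bigl(1 + x^{\beta_1 - 1} + o(1)\bigr)\Li(x)/\phi(q)$ with an effective $o(1)$, the bound of Theorem~\ref{thrm:MainTheorem} being the case $x^{\beta_1 - 1} \le 1$. Combining this with the display above and with $\Li(x) = \bigl(1 + O(1/\log x)\bigr)x/\log x$ gives
\[
\pi(x;q,a) \le \bigl(1 + e^{-8\eta} + o(1)\bigr)\frac{x}{\phi(q)\log x} < \frac{2x}{\phi(q)\log x}
\]
once $q$ exceeds an effective threshold $q_4$, namely as soon as the effective error terms drop below the fixed positive gap $1 - e^{-8\eta}$. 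The argument is effective throughout because $\eta$ is a genuine constant and no detection of a Siegel zero is needed.

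For the lower bound I would argue directly. Insert the genuine zero-free region above into the truncated explicit formula for $\psi(x;q,a)$: the principal character contributes the main term $x/\phi(q)$, and the remaining terms form $\phi(q)^{-1}\sum_{\chi \ne \chi_0}\bar\chi(a)\sum_{|\gamma_\chi| \le T} x^{\rho_\chi}/\rho_\chi$. This sum over zeros with $|\gamma| \ll 1$ is exactly the quantity controlled by the extended log-free zero-density estimates of the preceding sections, which together with the zero-free region of width $\eta'/\log q$ bound it by $O(x^{1-\delta})$ for some effective $\delta > 0$, the constraint $x \ge q^8$ being precisely what makes the relevant density exponent beatable. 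Hence $\psi(x;q,a) = \bigl(1 + o(1)\bigr)x/\phi(q)$ with effective $o(1)$; discarding prime powers (a contribution of size $O(\sqrt{x}\log x)$) and passing from $\psi$ to $\pi$ by partial summation then yields $\pi(x;q,a) = \bigl(1 + o(1)\bigr)\Li(x)/\phi(q) \gg x/(\phi(q)\log x)$, with an implied constant depending only on $\eta$.

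The deduction above is short; the genuine difficulty lies earlier, in establishing the extended zero-density estimates valid for $|\gamma| \ll 1$ and the effective explicit-formula inequality with the correct dependence on $\beta_1$ that underlie Theorems~\ref{thrm:MainTheorem}--\ref{thrm:LowerBoundIneffective}. Relative to that work, the hypothesis of Theorem~\ref{thrm:NoSiegelZero} serves only to pin the exceptional zero at $\beta_1 \le 1 - \eta/\log q$, converting the trivial bound $x^{\beta_1-1} \le 1$ into $x^{\beta_1 - 1} \le e^{-8\eta}$; this fixed saving is exactly what separates the upper constant from $2$ on the one side and keeps the lower constant away from $0$ on the other, explaining why Theorems~\ref{thrm:LowerBoundEffective} and \ref{thrm:LowerBoundIneffective} can only give a weaker lower bound.
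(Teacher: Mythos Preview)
Your overall architecture is right and matches the paper: both inequalities are meant to come from the second case of Proposition~\ref{Prpstn:OverallResult} (the no-Siegel-zero case) together with the partial-summation identity \eqref{eq:MainPiExpression}. But there is a genuine error in what you claim that machinery yields.

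You assert that the zero-free region of width $\eta'/\log q$ together with the log-free density estimates bounds the zero sum by $O(x^{1-\delta})$, and hence that $\psi(x;q,a)=(1+o(1))x/\phi(q)$. This is too strong and is not what the paper proves (nor what is provable here). A zero with $\beta=1-\eta/\log q$ contributes $x^{\beta}=x\,e^{-\eta\log x/\log q}=x\,e^{-M\eta}$, which for bounded $M=\log x/\log q$ is a fixed constant times $x$, not $x^{1-\delta}$. The density estimate only tells you there are $O(1)$ such zeros, so the total is still a constant times $x$. What the paper actually establishes, after all the explicit numerical work of Sections~6 and~7, is
\[
\sum_{\chi\ne\chi_0}\sum_{\rho}\exp(-M\lambda_\rho)<1-\epsilon
\]
for $M\ge 7.999$ and some fixed $\epsilon>0$; this gives $\bigl|\psi(x;q,a)-x/\phi(q)\bigr|<(1-\epsilon)x/\phi(q)$, not an asymptotic. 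That single two-sided inequality, fed into \eqref{eq:MainPiExpression} with $A=7.999$, produces both the upper bound $\pi(x;q,a)<2x/(\phi(q)\log x)$ and the lower bound $\pi(x;q,a)\gg x/(\phi(q)\log x)$ simultaneously.

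The same overclaim appears in your upper-bound paragraph: the form $\pi(x;q,a)\le(1+x^{\beta_1-1}+o(1))\Li(x)/\phi(q)$ is not what the argument for Theorem~\ref{thrm:MainTheorem} delivers. In the Siegel case the paper gets the bound $(1-\lambda_1)$, not $e^{-M\lambda_1}+o(1)$, because the non-exceptional zeros contribute a further $O(\lambda_1)x$, which is a small constant times $x$ but not $o(x)$. In the no-Siegel case there is no dominant zero to single out at all; the bound $1-\epsilon$ comes only from the careful optimisation in Section~7, not from any single term. So you cannot separate the arguments for the two inequalities the way you propose; the paper obtains both at once from the same inequality on $\psi$.
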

Thus the number of primes in an arithmetic progression is close to expected order predicted by GRH, provided $\log{x}/\log{q}\ge 8$ and $q$ is sufficiently large. If there are no zeros exceptionally close to 1 then the number of primes has the same order as the asymptotic predicted by GRH.

In order to establish Theorems \ref{thrm:MainTheorem}, \ref{thrm:LowerBoundEffective}, \ref{thrm:LowerBoundIneffective} and \ref{thrm:NoSiegelZero} we prove the following proposition.
\begin{prpstn}\label{Prpstn:OverallResult}
There are fixed constants $\epsilon>0$ and $\eta>0$ such that:

If there is an $\eta$-Siegel zero $\rho_1=1-\lambda_1/\log{q}$ then there exists an effectively computable constant $q_5$, such that for $q\ge q_5$ and $x\ge q^{7}$ we have
\[\left|\psi(x;q,a)-\frac{x}{\phi(q)}\right|<\frac{(1-\lambda_1)x}{\phi(q)}.\]
If there are no $\eta$-Siegel zeros then there exists an effectively computable constant $q_6$ such that for $q\ge q_6$ and for $x\ge q^{7.999}$ we have
\[\left|\psi(x;q,a)-\frac{x}{\phi(q)}\right|<\frac{(1-\epsilon)x}{\phi(q)}.\]
\end{prpstn}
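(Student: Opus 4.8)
The plan is to estimate $\psi(x;q,a)$ by expanding it over Dirichlet characters mod $q$ and applying the explicit formula for each $\psi(x,\chi)$, reducing everything to a weighted sum over zeros $\rho=\beta+i\gamma$ of $L(s,\chi)$ with $|\gamma|$ bounded. Writing
\[
\psi(x;q,a)=\frac{1}{\phi(q)}\sum_{\chi}\bar\chi(a)\psi(x,\chi),
\qquad
\psi(x,\chi)=\delta_{\chi=\chi_0}\,x-\sum_{|\gamma|\le T}\frac{x^{\rho}}{\rho}+O\!\Bigl(\tfrac{x\log^2(qx)}{T}\Bigr),
\]
the main term $x/\phi(q)$ comes from $\chi_0$, and the task is to show the contribution of all nontrivial zeros is smaller than $(1-\epsilon)x/\phi(q)$ in absolute value (or $(1-\lambda_1)x/\phi(q)$ when a Siegel zero is present). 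Choosing $T$ a fixed power of $q$, the error term is acceptable once $x\ge q^{7.999}$, so the whole game is the zero sum $\sum_{\chi}\sum_{|\gamma|\le T}x^{\beta}/|\rho|$. Since each term is bounded by $x^{\beta}$, the strategy is to split the zeros by the size of $1-\beta$ and count them: a zero with $\beta$ close to $1$ is very costly (giving $x^{\beta}\approx x$) but, by log-free zero-density estimates, extremely rare, while the bulk of zeros have $\beta\le 1-c$ and contribute only $O(x^{1-c})=O(q^{-7.999c}x)$ after summing over all characters and the $O(T\log qT)$ zeros in the box.

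The key quantitative input is a log-free zero-density estimate valid for $|\gamma|\le T$ with $T$ a fixed power of $q$ — this is exactly the extension of Heath-Brown's density theorem flagged in the introduction. Concretely I would use a bound of the shape
\[
\sum_{\chi\!\!\!\!\pmod q}\;N(\sigma,T,\chi)\ll (q T)^{c_0(1-\sigma)}
\]
for some absolute $c_0$ (with the refinement, as in Linnik's method, that for $\sigma$ very close to $1$ the exponent improves, reflecting that there is at most one troublesome real character). Feeding this into the dyadic decomposition of the zero sum, the contribution of zeros with $1-\sigma\in[\,U,2U\,]$ is $\ll x^{1-U}(qT)^{c_0 U}=x^{1-U}x^{\theta(1+\kappa)c_0 U}$ where $T=q^{\kappa}$; since $\theta\le 1/7.999$, for $c_0,\kappa$ fixed and $\theta$ small enough this exponent is bounded away from $1$ uniformly in $U$, so summing the geometric series over dyadic $U$ gives a bound $O(x^{1-\epsilon'})$. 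Dividing by $\phi(q)$ and comparing with $x/\phi(q)$ yields the stated inequality with room to spare — provided no zero sits too close to $1$.

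The one unavoidable exception is the possible exceptional (Siegel) zero $\rho_1=1-\lambda_1/\log q$ of a real character $\chi_1$: its term contributes $x^{\beta_1}/|\rho_1|=x^{1-\lambda_1/\log q}(1+o(1))=x\cdot q^{-\lambda_1/\log q}(1+o(1))$ — wait, more precisely $x^{1-\lambda_1/\log q}=x\,e^{-\lambda_1(\log x/\log q)}$, which for $\log x/\log q\ge 7$ is at most $x\,e^{-7\lambda_1}\le (1-c\lambda_1)x$ for small $\lambda_1$ — and by Landau's theorem there is at most one such zero. In the first branch of the Proposition one keeps this term explicitly: it is the only zero with $\beta>1-\eta/\log q$, all others are handled by the density estimate as above, and the resulting bound is $|\psi(x;q,a)-x/\phi(q)|\le x^{1-\lambda_1/\log q}/\phi(q)\cdot(1+o(1))+O(x^{1-\epsilon'}/\phi(q))<(1-\lambda_1)x/\phi(q)$ once one checks $e^{-\lambda_1\cdot 7}<1-\lambda_1$ for the relevant small range of $\lambda_1$ (ensured by making $\eta$ small). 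In the second branch one assumes no $\eta$-Siegel zero, so every zero has $1-\beta\ge\eta/\log q$, giving a uniform saving $x^{\beta}\le x^{1-\eta/\log q}=x\,q^{-\eta/\theta\cdot\theta}$... — i.e. a fixed power saving $x\cdot e^{-\eta\log x/\log q}\le x\,e^{-7.999\eta}$ — and combined with the density bound this beats $(1-\epsilon)x$ for a fixed $\epsilon>0$.

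The main obstacle is the zero-density estimate itself in the required uniform range $|\gamma|\le q^{\kappa}$: Heath-Brown's log-free density theorem is tailored to $|\gamma|\ll 1$ (the range relevant to Linnik's constant), and pushing it to $|\gamma|$ as large as a power of $q$ while keeping the exponent $c_0$ small enough that $\theta(1+\kappa)c_0<1$ at $\theta=1/7.999$ is delicate — it requires a careful reworking of the detection-of-zeros / mollified mean-value argument with the larger conductor $qT$ in place of $q$, and it is here that the precise threshold $\theta\le 1/8$ (versus $1/7$ in the Proposition, with the gap absorbing the error-term and density-estimate losses) gets pinned down. Everything else — the explicit formula, the character expansion, the dyadic summation, and the bookkeeping with the exceptional zero — is routine once that density estimate is in hand.
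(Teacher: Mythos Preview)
Your outline captures the overall shape of the argument --- explicit formula, separate the possible exceptional zero, bound the remaining zero sum via density estimates --- but there are two genuine gaps, one in each branch.

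\textbf{Siegel-zero case.} You write that the exceptional term gives $x e^{-7\lambda_1}\le(1-c\lambda_1)x$ and that ``all others are handled by the density estimate as above''. But the density estimate alone gives only that the non-exceptional zeros contribute $O(x)$, with an implied constant coming from $C_3$; this swamps the tiny saving $c\lambda_1 x$. What is missing is the Deuring--Heilbronn phenomenon (Proposition~\ref{Prpstn:DeuringHeilbronn} in the paper): the presence of an $\eta$-Siegel zero repels every other zero to the left of $1-c_2\log(\lambda_1^{-1})/\log(q(2+|t|))$. Only with this extra input does each non-exceptional term pick up a factor $\lambda_1^{c_2 M/\log(q T)}$, which --- once fed into the density bound and summed --- yields a total contribution $\ll \lambda_1 x$ (and in fact $o(\lambda_1 x)$ by taking $\eta$ small). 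Without Deuring--Heilbronn the inequality $e^{-7\lambda_1}+\text{(rest)}<1-\lambda_1$ simply fails.

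\textbf{No-Siegel case.} Here the gap is more subtle but is precisely the point of the paper. Your dyadic argument bounds the zero sum by roughly $C_3\sum_{k\ge\eta}e^{k(c_0(1+\kappa)-M)}$; with $c_0\approx 12/5$, $\kappa\approx 1$ and $M=7.999$ the geometric ratio is indeed $<1$, but the prefactor is $C_3\cdot e^{c_0(1+\kappa)}$ times a constant, which is far larger than $1$. So you get $\sum|x^\rho/\rho|\le Cx$ for some absolute $C\gg1$, not $<(1-\epsilon)x$. The paper says exactly this at the start of Section~\ref{sec:NonSiegel}: the crude argument establishes the bound for $M$ sufficiently large, but to nail down $M=7.999$ one must replace the qualitative density estimate by explicit, numerically sharp weighted density bounds (Lemmas~\ref{lmm:FirstZeroDensity}--\ref{lmm:NewZeroDensity}), extending Heath-Brown's Linnik-constant machinery from $|\gamma|\ll 1/\mathcal{L}$ to $|\gamma|\ll 1$, and then verify by computation (Table~\ref{Table:ResultsTable}) that $\sum_{\chi\ne\chi_0}\sum_{\rho\in\mathcal{R}}e^{-M\lambda_\rho}<1$. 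Your final paragraph locates the obstacle in pushing the density estimate to $|\gamma|\le q^\kappa$, but in fact zeros with $|\gamma|$ large are easily disposed of by the crude bound; the real work is getting the constant below $1$ for the bounded-height zeros, and that is what requires the delicate Sections~5--7.
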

We now establish Theorems \ref{thrm:MainTheorem}, \ref{thrm:LowerBoundEffective}, \ref{thrm:LowerBoundIneffective} and \ref{thrm:NoSiegelZero} assuming Proposition \ref{Prpstn:OverallResult}.

By partial summation we have for any constant $7\le A< 8$
\begin{align}
\pi(x;q,a)&=\frac{\theta(x;q,a)}{\log{x}}+\int_2^x \frac{\theta(t;q,a)}{t\log^2{t}}d t\nonumber\\
&=\frac{\theta(x;q,a)}{\log{x}}+\int_{q^{A}}^x \frac{\theta(t;q,a)}{t\log^2{t}}d t+\int_{q^2}^{q^{A}} \frac{\theta(t;q,a)}{t\log^2{t}}dt+\int_{2}^{q^2} \frac{\theta(t;q,a)}{t\log^2{t}}d t.
\end{align}
By the Brun-Titchmarsh Theorem for $q^2\le t\le q^{A}$ we have
\begin{equation}
\theta(t;q,a)\le (\log{t})\pi(t;q,a)\ll \frac{t}{\phi(q)},
\end{equation}
and trivially for $t\le q^2$ we have
\begin{equation}
\theta(t;q,a)\le t\log{t}.
\end{equation}
We also note that
\[\theta(x;q,a)=\psi(x;q,a)+O(x^{1/2}).\]
Thus we have uniformly for $x\ge q^8$ and $7\le A\le 8$ that
\begin{align}
\pi(x;q,a)&=\frac{\psi(x;q,a)}{\log{x}}+\int_{q^{A}}^x \frac{\psi(t;q,a)}{t\log^2{t}}dt+O\left(x^{1/2}+\frac{q^{A}}{\phi(q)}\right).
\end{align}
This gives
\begin{align}
\left|\pi(x;q,a)-\frac{\Li(x)}{\phi(q)}\right|&\le \frac{1}{\log{x}}\left|\psi(x;q,a)-\frac{x}{\phi(q)}\right|+\int_{q^A}^x\frac{\left|\psi(t;q,a)-t/\phi(q)\right|}{t\log^2{t}}dt\nonumber\\
&\qquad+O\left(x^{1/2}+\frac{q^A}{\phi(q)}\right).\label{eq:MainPiExpression}
\end{align}
If there is an $\eta$-Siegel zero (where $\eta$ is the constant from Proposition \ref{Prpstn:OverallResult}) then we choose $A=7$ and by Proposition \ref{Prpstn:OverallResult} uniformly for $q\ge q_6$ and $x\ge q^8$ we have
\begin{align}
\left|\pi(x;q,a)-\frac{\Li(x)}{\phi(q)}\right|&\le \frac{(1-\lambda_1)x}{\phi(q)\log{x}}+\int_{q^{7}}^x\frac{1-\lambda_1}{\phi(q)\log^2{t}}dt+O\left(x^{1/2}+\frac{q^{7}}{\phi(q)\log{x}}\right)\nonumber\\ 
&\le\frac{(1-\lambda_1)\Li(x)}{\phi(q)}+O\left(\frac{x}{q\phi(q)\log{x}}\right).
\end{align}
By Pintz \cite{Pintz}[Theorem 3] we have that $\lambda_1\gg \log{q}/q^{1/2}$ (with the implied constant effectively computable).

Thus for $q$ sufficiently large and $x\ge q^8$ we have
\begin{equation}
\frac{x\log{q}}{q^{1/2}\phi(q)\log{x}}\ll \frac{\lambda_1\Li(x)}{2\phi(q)\log{x}}\le \pi(x;q,a)\le \frac{(2-\lambda_1)\Li(x)}{\phi(q)}\le \frac{2\Li(x)}{\phi(q)},\label{eq:SiegelUpperBound}
\end{equation}
with all constants effectively computable.

By Siegel's theorem \cite{Siegel}, given any $\epsilon>0$ there is a constant $C(\epsilon)$ such that if $q\ge C(\epsilon)$ we have $\lambda_1\ge 2q^{-\epsilon}$. Here the constant $C(\epsilon)$ is not effectively computable. In this case, we have
\begin{align}
\frac{x q^{-\epsilon}}{\phi(q)\log{x}}\le \frac{\lambda_1\Li(x)}{2\phi(q)\log{x}}<\pi(x;q,a).\label{eq:IneffectiveBound}
\end{align}
If there is no $\eta$-Siegel zero then we instead choose $A=7.999$. By Proposition \ref{Prpstn:OverallResult} and \eqref{eq:MainPiExpression} there exists an $\epsilon>0$ and $q_5$ such that uniformly for $x\ge q^8$ and for $q\ge q_5$ we have 
\begin{align}
\left|\pi(x;q,a)-\frac{x}{\phi(q)}\right|&\le \frac{(1-\epsilon)x}{\phi(q)\log{x}}+\int_{q^{7}}^x\frac{1-\epsilon}{\phi(q)\log^2{t}}dt+O\left(x^{1/2}+\frac{q^{7.999}}{\phi(q)\log{x}}\right)\nonumber\\
&=\frac{(1-\epsilon)\Li(x)}{\phi(q)}+O\left(\frac{x^{1-1/10,000}}{\phi(q)\log{x}}\right).
\end{align}
Thus for $q$ sufficiently large and $q^8\le x$ we have
\begin{equation}
\frac{x}{\phi(q)\log{x}}\ll \pi(x;q,a)< \frac{2x}{\phi(q)\log{x}}.\label{eq:NonSiegelUpperBound}
\end{equation}
Theorems \ref{thrm:MainTheorem}, \ref{thrm:LowerBoundEffective}, \ref{thrm:LowerBoundIneffective} and \ref{thrm:NoSiegelZero} now follow immediately from \eqref{eq:SiegelUpperBound}, \eqref{eq:IneffectiveBound} and \eqref{eq:NonSiegelUpperBound}.
\section{Case 1: Siegel Zeroes}\label{sec:Siegel}
We first consider the case when there are zeros very close to 1. For this section we assume that $\eta$-Siegel zeros exist for some small constant $\eta>0$.

In order to establish Proposition \ref{Prpstn:OverallResult} we will make use of the analytic techniques developed in the estimation of Linnik's constant. In particular, there are three main results which we use:
\begin{prpstn}[Zero-free region]\label{Prpstn:ZeroFree}
There is a constant $c_1>0$ such that for $q$ sufficiently large
\[\prod_{\chi\pmod{q}}L(\sigma+it,\chi)\]
has at most one zero in the region
\[1-\frac{c_1}{\log{q(2+|t|)}}\le \sigma.\]
Such a zero, if it exists, is real and simple, and the corresponding character must be a non-principal real character.
\end{prpstn}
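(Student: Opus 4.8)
The plan is to run the classical Landau--Page argument for the zero-free region of Dirichlet $L$-functions, assembled carefully so that the uniqueness and the structural conclusions (real, simple, attached to a non-principal real character) all fall out. I would begin by recording the standard analytic input: for $\sigma>1$ and any $\chi\pmod q$,
\[ -\Re\,\frac{L'}{L}(\sigma+it,\chi)=\sum_{n}\frac{\Lambda(n)\chi(n)}{n^{\sigma}}\cos(t\log n), \]
together with the Hadamard-product estimate
\[ -\Re\,\frac{L'}{L}(\sigma+it,\chi)\le A\log\!\left(q(2+|t|)\right)-\sum_{\rho}\Re\,\frac{1}{\sigma+it-\rho}, \]
the sum running over non-trivial zeros $\rho$, with the extra term $\frac{1}{\sigma-1}$ added when $\chi=\chi_0$ to account for the pole. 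Since $\Re\frac{1}{\sigma+it-\rho}\ge0$ for $\sigma>1$, any subset of the zeros may be discarded to obtain one-sided bounds.

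The engine is the inequality $3+4\cos\varphi+\cos2\varphi=2(1+\cos\varphi)^2\ge0$, applied termwise to the Dirichlet series, which gives
\[ 0\le -3\,\frac{L'}{L}(\sigma,\chi_0)-4\,\Re\,\frac{L'}{L}(\sigma+it,\chi)-\Re\,\frac{L'}{L}(\sigma+2it,\chi^2). \]
Given a zero $\rho=\beta+i\gamma$ of $L(s,\chi)$ I would take $t=\gamma$, retain only that zero in the $\chi$-term, and bound the $\chi_0$- and $\chi^2$-terms as above, obtaining (when $\chi^2\neq\chi_0$)
\[ 0\le \frac{3}{\sigma-1}-\frac{4}{\sigma-\beta}+A\log\!\left(q(2+|\gamma|)\right); \]
choosing $\sigma-1=c/\log(q(2+|\gamma|))$ with $c$ small forces $\beta\le 1-c_1/\log(q(2+|\gamma|))$. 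This already excludes every zero of every complex character. For real $\chi$ one has $\chi^2=\chi_0$, so the $\chi^2$-term acquires $\Re\frac{1}{\sigma-1+2i\gamma}\le\frac{\sigma-1}{(\sigma-1)^2+4\gamma^2}$, which is negligible unless $|\gamma|\ll 1/\log q$; for $|\gamma|$ of order at least $1/\log q$ the same bound as before persists, and for smaller $|\gamma|$ one refines by also discarding the conjugate zero $\bar\rho$, which shows any surviving zero must in fact be real. Hence an exceptional zero, if present, is a real zero of a non-principal real character.

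It remains to show there is at most one such zero and that it is simple; here I would use Landau's four-character trick. For real non-principal characters $\chi_1,\chi_2\pmod q$ (allowing $\chi_1=\chi_2$) the product $L(s,\chi_0)L(s,\chi_1)L(s,\chi_2)L(s,\chi_1\chi_2)$ has non-negative logarithmic coefficients because $(1+\chi_1(n))(1+\chi_2(n))\ge0$, so summing $-L'/L$ over these four functions and keeping the pole at $s=1$ together with real zeros $\beta_1,\beta_2$ yields
\[ 0\le \frac{1}{\sigma-1}+A-\frac{1}{\sigma-\beta_1}-\frac{1}{\sigma-\beta_2}; \]
optimizing $\sigma$ shows $\beta_1$ and $\beta_2$ cannot both lie within $c_1/\log q$ of $1$, which gives the uniqueness claim, and the choice $\chi_1=\chi_2$ with $\beta_1=\beta_2$ a putative double zero gives the same contradiction, hence simplicity. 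I expect the main obstacle to be the bookkeeping for small $|\gamma|$---cleanly ruling out near-real complex zeros of real characters and folding that case together with the genuinely real zeros---and, alongside it, fixing the constants $A,c,c_1$ and the threshold on $q$ so that everything is simultaneously consistent and uniform in $t$; the inequalities themselves are routine, but the case analysis must be handled with care to land precisely the three-part conclusion.
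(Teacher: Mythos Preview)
Your outline is the classical Landau--Page argument and is correct in substance; the three-pronged treatment (complex characters via $3+4\cos\varphi+\cos2\varphi\ge0$, real characters with the extra polar term from $\chi^2=\chi_0$, and uniqueness/simplicity via the non-negativity of $(1+\chi_1)(1+\chi_2)$) is exactly the standard route found in, e.g., Davenport's \emph{Multiplicative Number Theory}.

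However, you should be aware that the paper does not prove this proposition at all. It is stated in Section~4 as one of three classical inputs (alongside the Deuring--Heilbronn phenomenon and the log-free zero-density estimate) that are simply quoted and used; no argument is supplied. So there is nothing to compare your proof against: the paper treats this as a known black box, and your proposal supplies the textbook proof that the paper omits. If you wish to include a proof in your own write-up, what you have sketched is entirely appropriate; the only point requiring care, as you yourself flag, is the case of complex zeros of real characters with $|\gamma|\ll 1/\log q$, where one keeps both $\rho$ and $\bar\rho$ in the sum over zeros to force $\beta$ away from $1$ unless $\gamma=0$.
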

\begin{prpstn}[Deuring-Heilbronn phenomenon]\label{Prpstn:DeuringHeilbronn}
There is a constant $c_2>0$ such that, if the exceptional zero $\rho_1=1-\lambda_1/(\log{q})$ from Proposition \ref{Prpstn:ZeroFree} exists, then for $q$ sufficiently large, the function
\[\prod_{\chi\pmod{q}}L(\sigma+it,\chi)\]
has no other zeros in the region
\[1-\frac{c_2\log(\lambda_1^{-1})}{\log{q(2+|t|)}}\le \sigma\le 1.\]
\end{prpstn}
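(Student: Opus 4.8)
The plan is to prove Proposition~\ref{Prpstn:DeuringHeilbronn} by the classical positivity (power-sum) method that underlies Linnik's work and all of the treatments of the Deuring-Heilbronn phenomenon in the references cited above. Write $\chi_1$ for the real character attached to the exceptional zero $\rho_1=1-\lambda_1/\log q$, and suppose $L(s,\chi)$ has a further zero $\rho=\beta+i\gamma$, where we may take $\chi$ primitive of conductor dividing $q$. The key non-negativity is that for $(n,q)=1$,
\[
(1+\chi_1(n))\bigl(3+4\Re(\chi(n)n^{-i\gamma})+\Re(\chi(n)^2n^{-2i\gamma})\bigr)\ge 0,
\]
the first factor because $\chi_1$ is real and the second because $3+4\cos\theta+\cos 2\theta=2(1+\cos\theta)^2$. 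Multiplying this by $\Lambda(n)n^{-\sigma_0}$ for real $\sigma_0>1$ and summing over $n$ produces a non-negative quantity equal to a fixed combination, with positive coefficients, of $-\zeta'/\zeta(\sigma_0)$, $-(L'/L)(\sigma_0,\chi_1)$, the real parts of $-(L'/L)(\sigma_0+i\gamma,\psi)$ for $\psi\in\{\chi,\chi_1\chi\}$, and the real parts of $-(L'/L)(\sigma_0+2i\gamma,\psi)$ for $\psi\in\{\chi^2,\chi_1\chi^2\}$.

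The next step is to insert the Hadamard factorisation of each logarithmic derivative: for primitive $\psi$, $-\Re(L'/L)(s,\psi)$ equals one half the logarithm of the conductor of $\psi$, plus $O(\log(2+|t|))$ from the Gamma-factor, plus the pole term $\Re(s-1)^{-1}$ when $\psi=\chi_0$, minus $\sum_{\rho'}\Re(s-\rho')^{-1}$ over its non-trivial zeros. Every summand in that last sum is positive for $\Re(s)>1$, so all zeros may be dropped except the exceptional one: $\beta_1$ contributes $-(\sigma_0-\beta_1)^{-1}$ to the $\chi_1$-term, and $\rho=\beta+i\gamma$ contributes $-(\sigma_0-\beta)^{-1}$ to the $\chi$-term (note $\sigma_0+i\gamma-\rho=\sigma_0-\beta$ is real). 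The pole of $\zeta$ supplies the dominant positive term $(\sigma_0-1)^{-1}$. Setting $Q=q(2+|\gamma|)$, $\sigma_0=1+\delta/\log Q$, $\lambda_1'=(1-\beta_1)\log Q$ and $\mu=(1-\beta)\log Q$, and using that all conductors appearing divide a bounded power of $q$ so that the conductor and Gamma contributions total at most $C\log Q$ for an absolute $C$, one reaches an inequality of the shape
\[
0\le \frac{3}{\delta}-\frac{3}{\delta+\lambda_1'}-\frac{4}{\delta+\mu}+C.
\]
When $\lambda_1'$ is below an absolute constant the first two terms nearly cancel, and choosing $\delta$ suitably forces $\mu\ge c_0$ for an absolute $c_0>0$: a zero-free region of width $\asymp(\log Q)^{-1}$.

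The main obstacle is to upgrade this fixed-width region to the region of width $\asymp\log(\lambda_1^{-1})/\log Q$ asserted in the proposition, and here one application is not enough; one iterates. With a zero-free region of width $R/\log Q$ already in hand, I would rerun the argument, now also working at scales $x$ which are moderate powers of $Q$ via the genuine explicit formula for $\psi(x,\chi_0)+\psi(x,\chi_1)$ (the presence of $\rho_1$ forces $\psi(x,\chi_1)\approx -x$, shrinking the main term to size $O(x\lambda_1)$), and combine this with a Tur\'an-type power-sum bound over the zeros in the critical strip to repel $\rho$ by a factor larger than at the previous stage. Iterating, $R$ grows geometrically until it reaches a fixed multiple of $\log(\lambda_1^{-1})$ --- the point beyond which the positive term $(\sigma_0-1)^{-1}$ can no longer be separated from $(\sigma_0-\beta_1)^{-1}$ by any choice of $\sigma_0$. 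Carrying this out cleanly, uniformly in $|\gamma|$ and over all characters modulo $q$, and with every constant absolute and effective, is the technical core; it is exactly the iteration already present in Linnik's proof and in the refinements due to Jutila, Graham, Heath-Brown and the other authors cited, so I would import it essentially unchanged.

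Finally, two degenerate configurations need separate, easy, treatment. If $\chi$ is quadratic then $\chi^2=\chi_0$, so the $\cos 2\theta$ part of the kernel introduces a pole at $\sigma_0+2i\gamma$, which is large for small $\gamma$; but by Proposition~\ref{Prpstn:ZeroFree} there is at most one zero near $\Re(s)=1$ among all characters modulo $q$, so a second such zero coming from a quadratic character would have to equal $\rho_1$, and the case does not occur. The case $\chi=\chi_1$ (so $\rho$ is a further zero of $L(s,\chi_1)$) collapses similarly, using the simplicity of $\rho_1$ in Proposition~\ref{Prpstn:ZeroFree} together with the iteration above. One must also track conductors when forming $\chi_1\chi$, $\chi^2$ and $\chi_1\chi^2$, but these cost only bounded powers of $q$ and are harmless.
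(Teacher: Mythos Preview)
The paper does not prove Proposition~\ref{Prpstn:DeuringHeilbronn}; it is quoted as one of three classical inputs from the Linnik--constant literature, with the explicit value $c_2=2/3-1/1000$ attributed to Graham's thesis \cite{GrahamThesis} (see the line ``We note that by \cite{GrahamThesis} and \cite{HB}[Equation 1.4] we can take $c_2=2/3-1/1000$, $c_3=12/5+1/1000$''). So there is no proof in the paper to compare your proposal against.

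As for the content of your sketch: the opening move with the $(1+\chi_1(n))(3+4\cos\theta+\cos 2\theta)$ kernel and Hadamard factorisation is a correct and standard way to obtain a \emph{fixed} zero-free width $c_0/\log Q$, but as you yourself note, it does not by itself produce the $\log(\lambda_1^{-1})$ factor. The substance of Deuring--Heilbronn lies entirely in the step you describe as ``iterate'' and propose to ``import essentially unchanged'' from the literature. That step is not a minor technicality: in the Graham/Heath-Brown treatments it is done via Tur\'an's power-sum inequality applied to a smoothed explicit formula, and getting the constant as good as $2/3$ (which the paper actually uses numerically in Section~\ref{sec:Siegel}) requires real work. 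Your proposal therefore reduces to the same citation the paper makes, dressed up with a preamble that establishes only the weaker classical zero-free region. If the goal is to supply an independent proof, the iteration/power-sum argument must be written out; if the goal is to justify the proposition for use in the paper, a direct citation (as the paper does) is both shorter and more honest about where the content lies.
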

\begin{prpstn}[Log-free zero-density estimate]\label{Prpstn:ZeroDensity}
For $T\ge 1$ there are constants $c_3>0$ and $C_3>0$ such that
\[\sum_{\chi \pmod{q}}N(\sigma,T,\chi)\le C_3(q T)^{c_3(1-\sigma)}.\]
Here
\[N(\sigma,T,\chi)=\#\{\rho:L(\rho,\chi)=0,\quad\Re(\rho)\ge \sigma,\quad|\Im(\rho)|\le T\}.\] 
\end{prpstn}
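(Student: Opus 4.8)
I would prove this by adapting the proof of the log‑free zero‑density estimate used in the work on Linnik's constant — in particular the version of Heath‑Brown \cite{HB} — to the range $|\Im\rho|\le T$ for all $T\ge 1$; in \cite{HB} only zeros with $|\Im\rho|$ in a bounded range are needed, and the new content here is to carry every estimate through uniformly in $T$. Two reductions come first. The principal character contributes exactly the zeros of $\zeta$, and for these $N(\sigma,T,\chi_0)$ is controlled by the classical log‑free density estimate for $\zeta$ together with $\zeta(s)\ne 0$ for $\Re s\ge 1$, so I may take $\chi$ non‑principal. Secondly, counting all zeros of height at most $T$ gives the trivial bound $\sum_{\chi}N(\sigma,T,\chi)\ll qT\log(qT)$, which is already $\ll (qT)^{c_3(1-\sigma)}$ once $\sigma$ is bounded away from $1$ and $c_3$ is large; so I may assume $\sigma\ge 3/4$ (say), i.e. $1-\sigma$ small.

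The core is Linnik's zero‑detection method. Put $M_y(s,\chi)=\sum_{m\le y}\mu(m)\chi(m)m^{-s}$ with $y$ a fixed power of $qT$, so that $L(s,\chi)M_y(s,\chi)=\sum_{n\ge 1}b_n\chi(n)n^{-s}$ with $b_1=1$, $b_n=0$ for $1<n\le y$, and $|b_n|\le d(n)$. For a zero $\rho=\beta+i\gamma$ with $\beta\ge\sigma$, $|\gamma|\le T$, I would integrate $L(\rho+s,\chi)M_y(\rho+s,\chi)$ against a smooth Mellin kernel of scale $Z=(qT)^{O(1)}$ and shift the contour just past $s=0$: the residue there vanishes because $L(\rho,\chi)M_y(\rho,\chi)=0$, and bounding the shifted integral by the convexity bound $|L(\sigma'+it,\chi)|\ll(q(2+|t|))^{(1-\sigma')/2+o(1)}$ at height up to $T$ yields $\bigl|1+\sum_{y<n\le 2Z}b_n\chi(n)n^{-\rho}w(n/Z)\bigr|\le\tfrac12$ once $Z$ is a large enough fixed power of $qT$. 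Hence $\bigl|\sum_{y<n\le 2Z}b_n\chi(n)n^{-\rho}w(n/Z)\bigr|\ge\tfrac12$, and after removing the smooth factor $n^{\sigma-\beta}w(n/Z)$ (e.g. by a mean‑value estimate for the Dirichlet polynomial of length $\le 2Z$, or dyadically) one is left with a Dirichlet polynomial $\sum b_n\chi(n)n^{-\sigma-i\gamma}$, of length $(qT)^{O(1)}$ and with coefficients supported on $n>y$, whose relevant partial sum has size $\gg 1$ (up to a harmless factor $(\log qT)^{-O(1)}$).

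It then remains to count the pairs $(\chi,\rho)$ for which this happens. I would group the ordinates of the zeros of each $L(s,\chi)$ into unit‑spaced clumps, pass from values at zeros to local mean squares over unit intervals (a Sobolev‑type inequality), and sum over $\chi$, reducing to estimating, for a single length $N\le 2Z$,
\[\sum_{\chi\pmod{q}}\ \int_{|t|\le T+1}\Bigl|\sum_{N<n\le 2N}b_n\chi(n)n^{-\sigma-it}\Bigr|^{2}\,\nu_\chi(t)\,dt ,\]
where $\nu_\chi(t)$ counts the zeros of $L(s,\chi)$ within distance $1$ of height $t$. Bounding this by the hybrid large‑sieve inequality — whose mean‑square estimate supplies the crucial factor $qT+N$, and it is exactly this factor that makes the argument uniform in $T$ — together with $\sum_{N<n\le 2N}|b_n|^{2}n^{-2\sigma}\ll N^{1-2\sigma}(\log qT)^{O(1)}$ and $N\le(qT)^{O(1)}$, and optimising the power $y$, gives $\sum_{\chi}N(\sigma,T,\chi)\ll(\log qT)^{O(1)}(qT)^{c(1-\sigma)}$ for some absolute $c$.

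The one genuinely delicate point, and the step I expect to be the main obstacle, is stripping off the residual powers of $\log(qT)$ to obtain the clean bound $C_3(qT)^{c_3(1-\sigma)}$: for $\sigma$ so close to $1$ that $(qT)^{c_3(1-\sigma)}$ is only a bounded power of $\log(qT)$, no logarithmic loss is tolerable (the zero‑free region of Proposition~\ref{Prpstn:ZeroFree} disposes of the sub‑range $1-\sigma\ll 1/\log(qT)$ directly, but not the intermediate range). This is precisely the log‑free refinement in \cite{HB} and its antecedents: one keeps $Z$ of controlled size relative to $y$ and uses a mean‑value estimate on the full polynomial rather than a lossy dyadic decomposition; one carries the clump‑multiplicities $\nu_\chi(t)$ through the large sieve as weights instead of bounding them by $\max_t\nu_\chi(t)\ll\log(qT)$, which turns the estimate into a self‑improving inequality for $\sum_{\chi}N(\sigma,T,\chi)$; and one exploits the sparse support of the $b_n$ (obtained by suitably splitting the mollifier) to avoid the logarithmic loss inherent in $\sum_{n\le x}d(n)^{2}\ll x(\log x)^{3}$. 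Checking that all of this, as well as the convexity bound, the zero count up to height $T$, and the large sieve, goes through uniformly for $T\ge 1$ is exactly the extension of \cite{HB} that the proposition requires; obtaining a numerically small value of $c_3$ (which will matter for the applications in the following sections) would need a more elaborate mollifier, but for the statement as given only the existence of some admissible $c_3$ is at issue.
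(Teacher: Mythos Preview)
The paper does not prove this proposition. It is listed, together with the zero-free region and the Deuring--Heilbronn phenomenon, as one of three standard inputs taken from the literature, and the paper simply cites \cite{HB}[Equation 1.4] (and \cite{GrahamThesis}) for the explicit value $c_3=12/5+1/1000$. So there is no ``paper's own proof'' to compare your proposal against; your sketch is a reasonable outline of the standard Linnik--Jutila--Graham--Heath-Brown zero-detection argument, but for the purposes of this paper the proposition is a black box.

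One point of confusion worth flagging: you write that ``in \cite{HB} only zeros with $|\Im\rho|$ in a bounded range are needed, and the new content here is to carry every estimate through uniformly in $T$.'' That misidentifies where the new work lies. The log-free density estimate in the form of Proposition~\ref{Prpstn:ZeroDensity}, with the parameter $T\ge 1$, is already in the literature and is quoted as such. The extension this paper actually carries out is for the \emph{weighted} zero-density estimates of Section~6 (Lemmas~\ref{lmm:FirstZeroDensity}--\ref{lmm:NewZeroDensity}): Heath-Brown's versions of those lemmas handle only zeros with $|\Im\rho|\ll 1/\mathcal{L}$, and the paper must push them out to $|\Im\rho|\ll 1$ because the sharp cutoff in $\psi(x;q,a)$ forces all zeros of bounded height into play. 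Proposition~\ref{Prpstn:ZeroDensity} itself needs no extension and is used only as a crude envelope (in Section~\ref{sec:Siegel} and to discard far-away zeros in Section~\ref{sec:NonSiegel}).
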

We recall that for the purposes of this article we are defining a $\eta$-Siegel zero to be a real zero $\rho$ of some Dirichlet $L$-function in the region
\begin{equation}
1-\frac{\eta}{\log{q}}\le \rho\le 1
\end{equation}
for a fixed small positive constant $\eta$.

We will choose $\eta\le c_1/2$, so by Proposition \ref{Prpstn:ZeroFree} a $\eta$-Siegel zero, if it exists, must be simple, and the corresponding character must be a real character. Moreover, there can be at most one such zero. We label this exceptional zero $\rho_1=1-\lambda_1/(\log{q})$ with corresponding character $\chi_1$. Thus we have that $\lambda_1\le \eta$. We will also make use of the fact that $\lambda_1\gg_\epsilon q^{-1/2-\epsilon}$ (with the implied constant effectively computable), which follows from Dirichlet's class number formula.

We note that by \cite{GrahamThesis} and \cite{HB}[Equation 1.4] we can take
\begin{equation}
c_2=2/3-1/1000,\qquad c_3=12/5+1/1000,
\end{equation}
provided $\eta\le c_4$, some suitably small absolute constant.

We wish to prove
\begin{equation}\label{aim}
\left|\psi(x;q,a)-\frac{x}{\phi(q)}\right|\le \frac{(1-\lambda_1)x}{\phi(q)}.
\end{equation}
We have that
\begin{align}
\psi(x;q,a)&=\sum_{\substack{n\le x\\n\equiv a \pmod{q}}}\Lambda(n)\nonumber\\
&=\frac{1}{\phi(q)}\sum_{\chi \pmod{q}}\overline{\chi(a)}\left(\sum_{n\leq x} \Lambda(n)\chi(n)\right).\label{eq:PsiExpression}
\end{align}
We use the explicit formula: 
\begin{equation}\sum_{n\leq x}\Lambda(n)\chi(n)=\varepsilon_1(\chi)x-\varepsilon_2(\chi)\frac{x^{\rho_1}}{\rho_1}-\sum_\rho\frac{x^\rho}{\rho}+O\left(\frac{x(\log{x})^2}{T}\right),\label{eq:ExplicitFormula}\end{equation}
where:
\begin{align}
\varepsilon_1(\chi)&=
\begin{cases}
1,\qquad &\chi=\chi_0,\\
0, &\text{otherwise},
\end{cases}\\
\varepsilon_2(\chi)&=
\begin{cases}
1,\qquad &\chi \text{ is a character corresponding to the possible}\\
&\qquad\qquad\qquad\text{exceptional zero }\rho_1\text{ of }\prod_\chi L(s,\chi),\\
0, &\text{otherwise},
\end{cases}
\end{align}
and the sum $\sum_\rho$ is over all non-exceptional non-trivial zeros $\rho = \beta+i\gamma$ of $L(s,\chi)$ in the region $\{0<\beta<1,|\gamma|<T\}$.

We choose $T=q(\log{x})^3/\lambda_1$ so that the last term is $o(\lambda_1x/\phi(q))$.

Recalling that $\rho_1=1-\lambda_1/\log{q}$ we have
\begin{equation}
\frac{x^{\rho_1}}{\rho_1}=x\exp\left(-\lambda_1\frac{\log{x}}{\log{q}}\right)+o(\lambda_1 x).\label{eq:ExceptionalZeroSize}
\end{equation}
Substituting \eqref{eq:ExplicitFormula} and \eqref{eq:ExceptionalZeroSize} into \eqref{eq:PsiExpression} we have
\begin{equation}
	\left|\psi(x;q,a)-\frac{x}{\phi(q)}\right|\le \frac{x}{\phi(q)}\exp\left(-\lambda_1\frac{\log{x}}{\log{q}}\right)+\frac{1}{\phi(q)}\sum_{\chi\pmod{q}}\sum_{\rho}\left|\frac{x^\rho}{\rho}\right|+o\left(\frac{\lambda_1 x}{\phi(q)}\right).\label{eq:MainPsiExpression}
\end{equation}
We now bound the inner sum
\begin{equation}
\sum_{\chi \pmod{q}}\sum_{\rho}\left|\frac{x^\rho}{\rho}\right|.\label{eq:SiegelInnerSum}
\end{equation}
We first consider the case when $\log{x}>q^{1/3000}$. 

Since $\lambda_1\gg q^{-1/2-1/100}$ we have $T\ll q^{3/2+1/100}(\log{x})^3\ll (\log{x})^{4600}$. By Proposition \ref{Prpstn:ZeroFree} (and recalling $|\rho|\gg \lambda_1/\log{q}$ for all $\rho$) each zero in the sum \eqref{eq:SiegelInnerSum} contributes at most
\begin{equation}
\left|\frac{x^\rho}{\rho}\right|\le x\exp\left(-c\frac{\log{x}}{\log\log{x}}\right)
\end{equation}
for some constant $c>0$. By Proposition \ref{Prpstn:ZeroDensity} the total number of zeros in the sum is
\begin{equation}
\ll (qT)^{12/5+1/1000}\ll (\log{x})^{20000}.
\end{equation}
Thus we have that
\begin{equation}
\sum_{\chi \pmod{q}}\sum_{\rho}\left|\frac{x^\rho}{\rho}\right|\ll x(\log{x})^{20000}\exp\left({-c\frac{\log{x}}{\log\log{x}}}\right)= o(\lambda_1 x).
\end{equation}
Thus we see that for $x$ sufficiently large and $\log{x}>q^{1/3000}$, the right hand side of \eqref{eq:MainPsiExpression} is
\begin{equation}
\frac{x}{\phi(q)}\left(\exp\left(-\lambda_1\frac{\log{x}}{\log{q}}\right)+o(\lambda_1)\right)\le \frac{(1-\lambda_1)x}{\phi(q)},
\end{equation}
as required.

We now consider the case when $\log{x}\le q^{1/3000}$. In this case, since $\lambda_1\gg q^{-1/2-1/1000}$ we have $T\ll q^{3/2+2/1000}$.

We first consider the contribution to the sum \eqref{eq:SiegelInnerSum} from zeros in the rectangle
\begin{equation}
1-\frac{m+1}{\log{q}}\le \Re(\rho) \le 1-\frac{m}{\log{q}},\qquad n\le |\Im(\rho)|\le 2n,
\end{equation}
where $1\le n\le T$ and $m\le 0.4\log{q}$. By Proposition \ref{Prpstn:DeuringHeilbronn} with $c_2=2/3-1/1000$ there are no zeros in the rectangle unless
\begin{align}
m&\ge \left(\frac{2}{3}-\frac{1}{1000}\right)\left(\frac{\log{q}}{\log{q(2+T)}}\right)\log{\lambda_1^{-1}}\nonumber\\
&\ge 0.266\log{\lambda_1^{-1}}.\label{eq:MUpperBound}
\end{align}
Recalling that $m\le 0.4\log{q}$, by Proposition \ref{Prpstn:ZeroDensity} with $c_3=12/5+1/1000$ there are
\begin{equation}
\ll n^{0.97}\exp\left(2.41m\right)
\end{equation}
zeros in the rectangle.

If \eqref{eq:MUpperBound} holds then we see that each zero contributes
\begin{align}
\left|\frac{x^\rho}{\rho}\right|&\le \frac{x}{n}\exp\left(-m\frac{\log{x}}{\log{q}}\right)\nonumber\\
&\le  \frac{x}{n}\exp\left(-m\left(\frac{\log{x}}{\log{q}}-\frac{1}{0.266}\right)\right)\exp\left(-\frac{m}{0.266}\right)\nonumber\\
&\le \frac{\lambda_1x}{n}\exp\left(-m\left(\frac{\log{x}}{\log{q}}-3.76\right)\right).
\end{align}
Thus zeros in the rectangle give a total contribution of
\begin{equation}
\ll \frac{\lambda_1 x}{n^{0.03}}\exp\left(-m\left(\frac{\log{x}}{\log{q}}-6.17\right)\right).
\end{equation}
From summing this bound, we see that provided $q^{6.18}\le x$, the contribution to the sum \eqref{eq:SiegelInnerSum} from all non-exceptional zeros in the region
\begin{equation}
0.6\le \Re(\rho)\le 1,\qquad 1\le |\Im(\rho)|\le T
\end{equation}
is at most
\begin{equation}
C\lambda_1 x \exp(-c\log{\lambda_1}^{-1})\le C\lambda_1 x \exp(c\log{\eta})
\end{equation}
for some constants $C,c>0$. Since $\lambda_1\le \eta$ we see that for $\eta$ sufficiently small (depending only on $C,c$) this is at most $\lambda_1 x$.

Similarly we consider the contribution to the sum \eqref{eq:SiegelInnerSum} from zeros in the region
\begin{equation}
1-\frac{m+1}{\log{q}}\le \Re(\rho)\le 1-\frac{m}{\log{q}},\qquad |\Im(\rho)|\le 1,
\end{equation}
with $m\le 0.4\log{q}$. As above, each zero contributes
\begin{equation}\ll \lambda_1 x\exp\left(-m\left(\frac{\log{x}}{\log{q}}-3.76\right)\right).\end{equation}
The number of zeros in the rectangle is
\begin{equation}
\ll \exp(2.41m).
\end{equation}
Thus again the contribution of all zeros from the rectangles is at most
\begin{equation}
C\lambda_1 x \exp(-c\log{\lambda_1}^{-1})\le C\lambda_1 x \exp(c\log{\eta})
\end{equation}
for some positive constants $C,c$. Thus for $\eta$ sufficiently small this contribution is at most $\lambda_1x$.

Finally we consider zeros in the rectangles
\begin{equation}
0\le \Re(\rho)\le 0.6,\qquad |\Im(\rho)|\le \sqrt{T}\label{eq:R1}
\end{equation}
and
\begin{equation}
0\le \Re(\rho)\le 0.6,\qquad \sqrt{T}\le |\Im(\rho)|\le T\label{eq:R2}.
\end{equation}
By symmetry of zeros around the line $\Re(s)=1/2$ we have that $\Re(\rho)\gg \lambda_1/\log{q}$ for all such $\rho$. Thus, since $\lambda_1\gg q^{-1/2-1/100}$ and $x>q$ each zero satisfying \eqref{eq:R1}  contributes
\begin{equation}
\left|\frac{x^\rho}{\rho}\right| \le x^{0.6},
\end{equation}
and every zero satisfying \eqref{eq:R2} contributes
\begin{equation}
\left|\frac{x^\rho}{\rho}\right| \le \frac{x^{0.6}}{\sqrt{T}}.
\end{equation}
For $q$ sufficiently large there are
\begin{equation}
\ll (q\sqrt{T})^{1+1/1000}\le q^{1.76}
\end{equation}
zeros satisfying \eqref{eq:R1}, and
\begin{equation}
\ll (qT)^{1+1/1000}\le q^{1.76}\sqrt{T}
\end{equation}
zeros satisfying \eqref{eq:R2}. Thus the combined contribution is
\begin{equation}
\ll x^{0.6}q^{1.76}\ll \lambda_1 x \left(\frac{q^{2.27}}{x^{0.4}}\right).
\end{equation}
we see this is at most $\lambda_1 x$ for $q^{6}\le x$ and $q$ sufficiently large.

Since we have now covered all possible zeros in our sum, we see that for $\eta$ sufficiently small and $q^{6.18}\le x$ we have
\begin{equation}
\sum_{\chi\pmod{q}} \sum_{\rho}\left|\frac{x^\rho}{\rho}\right|\le 3\lambda_1 x.
\end{equation}
Substituting this into \eqref{eq:MainPsiExpression} we see that
\begin{equation}
\left|\psi(x;q,a)-\frac{x}{\phi(q)}\right|\le \frac{x}{\phi(q)}\left(\exp\left(-\lambda_1 \frac{x}{\phi(q)}\right)+4\lambda_1 \right).\end{equation}
We note that if $q^7\le x$ and $\eta<1/10$ then we have
\begin{equation}
\exp\left(-\lambda_1 \frac{\log{x}}{\log{q}}\right)+4\lambda_1 < 1-\lambda_1,
\end{equation}
since $1-e^{-7t}-5t$ is zero and increasing at 0, has a unique turning point and is positive at 1/10.

Thus we have shown that for $\eta$ sufficiently small, $q^{7}\le x$ and $\log{x}\le q^{3000}$ we have
\begin{equation}
\left|\psi(x;q,a)-\frac{x}{\phi(q)}\right|<\frac{(1-\lambda_1)x}{\phi(q)}
\end{equation}
as required.
\section{Case 2: No Siegel Zeroes}\label{sec:NonSiegel}
We now consider the case where there are no $\eta$-Siegel zeros for some small fixed constant $\eta>0$. In this case we have $\lambda_\rho\ge \eta$ for all zeros $\rho$ with $|\Im(\rho)|\le q^2$. Following the method in the previous section and using this zero free region, we can establish Proposition \ref{Prpstn:OverallResult} if $\log{x}/\log{q}$ is sufficiently large. To obtain an explicit lower bound for the range of $\log{x}/\log{q}$ in which this holds, however, would require us to estimate the constant $C_3$ in Proposition \ref{Prpstn:ZeroDensity}, and would likely produce a very large bound if done directly.

We will follow the work done on the estimation of Linnik's constant to obtain an explicit lower bound for $\log{x}/\log{q}$ for which the result holds. We do this by we estimating weighted sums over primes and weighted zero density estimates in smaller regions. In particular, as in the case for estimating Linnik's constant, we specifically need sharp estimates for the zeros with real part close to 1. This section follows closely the method of Heath-Brown in \cite{HB}[Section 13].

We define the following quantities which we shall for the rest of the paper:
\begin{align}
M&:=\frac{\log{x}}{\log{q}},\\
\mathcal{L}&:=\log{q},\\
\phi_\chi&:=\begin{cases}
\frac{1}{4},\qquad &\text{$q$ cube-free or ord$(\chi)\le \log{q}$},\\
\frac{1}{3},&\text{otherwise},
\end{cases}\\
\mathcal{Z}(\chi)&:=\{\rho: L(\rho,\chi)=0\}.
\end{align}
\subsection{Weighted Sum over Primes}
We wish to investigate
\begin{equation}\psi(x;q,a)=\sum_{\substack{n\le x\\n\equiv a \pmod{q}}}\Lambda(n).\end{equation}
We fix a small positive constant $\epsilon>0$ and let
\begin{equation}f(t)=
\begin{cases}
0,&t\le 1/2\\
\frac{\log{x}}{\epsilon}(t-1/2),&1/2\le t \le 1/2+\epsilon/\log{x}\\
1,&1/2+\epsilon/\log{x}\le t\le 1\\
1-\frac{\log{x}}{\epsilon}(t-1),&1\le t \le 1+\epsilon/\log{x}\\
0,&1+\epsilon/\log{x}\le t.
\end{cases}
\end{equation}
The Brun-Titchmarsh theorem for primes in short intervals (see \cite{MontgomeryVaughan}, for example) states that
\begin{equation}\pi(x;q,a)-\pi(x-y;q,a)\le \frac{2y}{\phi(q)\log{y/q}}.\end{equation}
We replace the sum
\begin{equation}\sum_{\substack{n\le x\\ n \equiv a \pmod{q}}} \Lambda(n)\end{equation}
with the weighted sum 
\begin{equation}\sum_{\substack{n\le x\\n\equiv a \pmod{q}}}\Lambda(n)f\left(\frac{\log{n}}{\log{x}}\right).\end{equation}
By the Brun-Titchmarsh theorem for primes in short intervals and for $\epsilon$ sufficiently small, the error introduced by making this change is
\begin{align}
&\le \sum_{\substack{x\le n \le x e^{\epsilon}\\n\equiv a \pmod{q}}}\Lambda(n)+\sum_{n \le e^\epsilon x^{1/2}}\Lambda(n)\nonumber\\
&\le (\log{x e^\epsilon})(\pi(x e^\epsilon;q,a)-\pi(x;q,a))+e^\epsilon(\log{x})x^{1/2}\nonumber\\
&\le \frac{4\epsilon x}{\phi(q)}.
\end{align}
Thus in order to prove
\begin{equation}\left|\psi(x;q,a)-\frac{x}{\phi(q)}\right|\le \frac{(1-\epsilon)x}{\phi(q)},\end{equation}
it is sufficient to prove that
\begin{equation}\left|\sum_{\substack{n\le x\\n\equiv a \pmod{q}}}\Lambda(n)f\left(\frac{\log{n}}{\log{x}}\right)-\frac{x}{\phi(q)}\right|< \frac{(1-5\epsilon)x}{\phi(q)}.\end{equation}
We note also
\begin{equation}
\sum_{\substack{n\le x\\n\equiv a \pmod{q}}}\Lambda(n)f\left(\frac{\log{n}}{\log{x}}\right)=\frac{1}{\phi(q)}\sum_{\chi \pmod{q}}\overline{\chi(a)}\left(\sum_{n\leq x} \Lambda(n)f\left(\frac{\log{n}}{\log{x}}\right)\chi(n)\right).
\end{equation}
We now replace $\chi$ in the inner sum with the primitive character $\chi^*$ which induces it. This introduces an error
\begin{align}
&\ll \frac{1}{\phi(q)}\sum_{\chi}\sum_{p|q}\sum_{x^{1/2}\le p^e\le x}\log{p}\nonumber\\
&\ll \sum_{p|q}\log{x}\nonumber\\
&\ll q^{\epsilon}\log{x}\nonumber\\
&\le \epsilon x
\end{align}
(recalling that $x>q$).

Thus it is sufficient to prove that
\begin{equation}\label{weightedsum}
\left|\sum_{\chi}\overline{\chi}(a)\sum_{n=1}^{\infty}\Lambda(n)\chi^*(n)f\left(\frac{\log{n}}{\log{x}}\right)-x\right|\le (1-6\epsilon)x.
\end{equation}
\subsection{Sum over Zeroes}
We let $F$ be the Laplace transform of $f$. Hence
\begin{align}
F(s)&=\int_{0}^{\infty}\exp(-st)f(t)d t\nonumber\\
&=e^{-s}\left(\frac{1-\exp(s/2)}{-s}\right)\left(\frac{1-\exp(\frac{\epsilon}{\log{x}}s)}{-\frac{\epsilon}{\log{x}}s}\right)\exp\left(-\frac{\epsilon}{\log{x}}s\right).
\end{align}
From the Laplace inversion formula we have
\begin{equation}f\left(\frac{\log{n}}{\log{x}}\right)=\frac{\log{x}}{2\pi i}\int_{2-i \infty}^{2+i\infty}n^{-s}F(-s\log{x})d s.\end{equation}
Therefore for $\chi\ne\chi_0$ we have
\begin{align}
\sum_{n=1}^{\infty}\Lambda(n)\chi^*(n)f\left(\frac{\log{n}}{\log{x}}\right)&=\frac{\log{x}}{2\pi i}\int^{2+i\infty}_{2-i\infty}\left(-\frac{L'}{L}(s,\chi^*)\right)\left(F(-s\log{x})\right)d s\nonumber\\
&=-\log{x}\sum_{\rho}F(-\rho\log{x})\nonumber\\
&\qquad+\frac{\log{x}}{2\pi i}\int^{-1/2+i\infty}_{-1/2-i\infty}\left(-\frac{L'}{L}(s,\chi^*)\right)\left(F(-s\log{x})\right)d s
\end{align}
where $\sum_\rho$ indicates a sum over all non-trivial zeros of $L(s,\chi)$.

On $\Re s=-\frac{1}{2}$ we have
\begin{equation}\frac{L'}{L}(s,\chi^*)\ll \log(q(1+|s|)),\qquad F(-s\log{x})\ll x^{-1/4}|s|^{-2}(\log{x})^{-1}.\end{equation}
Hence, recalling that $q\le x$,
\begin{equation}\frac{\log{x}}{2\pi i}\int^{-1/2+i\infty}_{-1/2-i\infty}\left(-\frac{L'}{L}(s,\chi^*)\right)\left(F(-s\log{x})\right)d s=O(x^{-1/4}\log{x}).\end{equation}
Thus
\begin{align}
\sum_{\chi\ne\chi_0}\left|\sum_{n=1}^{\infty}\Lambda(n)\chi^*(n)f\left(\frac{\log{n}}{\log{x}}\right)\right|&\le \log{x}\sum_{\chi\ne\chi_0}\sum_{\rho}|F(-\rho\log{x})|+O(q x^{-1/4}\log{x})\nonumber\\
&\le \log{x}\sum_{\chi\ne\chi_0}\sum_{\rho}|F(-\rho\log{x})|+\epsilon x.\label{eq:FirstNonChi0}
\end{align}
We now consider the case $\chi=\chi_0$. We note that $\chi_0^*$ is identically 1. Hence by the prime number theorem we have
\begin{equation}\label{eq:FirstChi0}
\left|\sum_{n=1}^{\infty}\Lambda(n)\chi_0^*(n)f\left(\frac{\log{n}}{\log{x}}\right)-x\right|\le 3\epsilon x.
\end{equation}
Thus putting together \eqref{eq:FirstNonChi0} and \eqref{eq:FirstChi0} we have
\begin{align}
&\left|\sum_\chi\overline{\chi}(a)\sum_{n=1}^{\infty}\Lambda(n)\chi^*(n)f\left(\frac{\log{n}}{\log{x}}\right)-x\right|\nonumber\\
&\qquad\le \left|\sum_{n=1}^{\infty}\Lambda(n)\chi_0^*(n)f\left(\frac{\log{n}}{\log{x}}\right)-x\right|+\sum_{\chi\ne\chi_0}\left|\sum_{n=1}^{\infty}\Lambda(n)\chi^*(n)f\left(\frac{\log{n}}{\log{x}}\right)\right|\nonumber\\
&\qquad\le 4\epsilon x +\log{x}\sum_{\chi\ne \chi_0}\sum_{\rho}\left|F(-\rho\log{x})\right|.\label{eq:keypoint1}
\end{align}
In particular it is sufficient to prove that
\begin{equation}
\log{x}\sum_{\chi\ne \chi_0}\sum_{\rho}\left|F(-\rho\log{x})\right|\le (1-10\epsilon)x.
\end{equation}
We now consider the contribution from the other characters where $\chi\ne \chi_0$. We first consider all zeros $\rho=\beta+i\gamma$ of all $L$-functions $L(s,\chi)$ (with $\chi\ne\chi_0$) in the rectangle
\begin{equation}1-\frac{m+1}{\log{q}}\le\beta\le 1-\frac{m}{\log{q}},\qquad n\le |\gamma| \le 2n\end{equation}
for $n\ge 1$.

We use the well-known zero density estimate
\begin{equation}\sum_{\chi}N(\sigma,\chi,T)\ll q^{3(1-\sigma)}\left(1+T^{3/2}\right).\end{equation}
Thus there are
\begin{equation}\ll e^{3m}\left(1+n^{3/2}\right)\end{equation}
such zeros in the rectangle.

Each zero contributes
\begin{equation}\log{x}\left|F(-\rho\log{x})\right|\ll x\frac{\exp(-m\frac{\log{x}}{\log{q}})}{\epsilon n^2}\end{equation}
to the right hand side of \eqref{eq:keypoint1}.

Thus, provided $M>3$, there is a constant $R$ (depending only on $\epsilon$) such that the contribution of all zeros in the rectangles with $\max(m,n)\ge R$ is
\begin{equation}\le \epsilon x.\end{equation}
Similarly we consider zeros in the rectangle
\begin{equation}\max\left(\frac{1}{2},1-\frac{m+1}{\log{q}}\right)\le \beta\le 1-\frac{m}{\log{q}},\qquad |\gamma|\le 1.\end{equation}
There are
\begin{equation}\ll e^{3m}\end{equation}
such zeros, and each zero contributes
\begin{equation}\ll x\frac{\exp(-m\frac{\log{x}}{\log{q}})}{\epsilon}.\end{equation}

Therefore again provided $M>3$, the contribution from all zeros in rectangles with $m\ge R$ is $\le \epsilon x$.

We now consider the final rectangle
\begin{equation}0\le \beta \le \frac{1}{2},\qquad |\gamma|\le 1.\end{equation}
All zeros must have $\beta\ge q^{-1/2-1/100}$ for $q$ sufficiently large (by symmetry of zeros about the critical line and the non-existence of Siegel zeros which are within $q^{-1/2-1/100}$ of $1$).

There are
\begin{equation}\ll q^{3/2}\end{equation}
zeros in this rectangle, and each zero contributes
\begin{equation}\ll \frac{x^{1/2}q^{2/100}}{\epsilon}.\end{equation}
Therefore the contribution from these zeros is
\begin{align}
&\ll \frac{x^{1/2}q^{3/2+1/50}}{\epsilon}\nonumber\\
&\le \epsilon x.
\end{align}
Thus at a cost of $3\epsilon x$ we only need to consider the contribution of zeros $\rho$ satisfying
\begin{equation}|1-\Re(\rho)|\ll_{\epsilon} \frac{1}{\log{q}},\qquad \Im(\rho)\ll_\epsilon 1.\end{equation}
For such $\rho$, and for $\epsilon$ sufficiently small and $q$ sufficiently large, we have
\begin{equation}\left|\left(\frac{1-x^{-\rho/2}}{\rho}\right)e^{\epsilon\rho}\right|\le 1+3\epsilon.\end{equation}
Also, for any $z\in\mathbb{C}$ with $\Re(z)\ge 0$ we have
\begin{equation}\left|\frac{1-e^{-z}}{z}\right|\le 1.\end{equation}
Thus, putting $\Re(\rho)=1-\lambda_\rho/\log{q}$, and recalling that $q^{7.999}\le x$ we have
\begin{align}
\log{x}\left|F(-\rho\log{x})\right|&=x\exp(-(1-\rho)\log{x})\left|\left(\frac{1-x^{-\rho/2}}{\rho}\right)\left(\frac{1-e^{-\epsilon\rho}}{\epsilon\rho}\right)e^{\epsilon\rho}\right|\nonumber\\
&\le x\exp\left(-\lambda_\rho\frac{\log{x}}{\log{q}}\right)(1+3\epsilon)\nonumber\\
&= x\exp\left(-M\lambda_\rho\right)(1+3\epsilon).
\end{align}
As before, we have put
\begin{equation}
M=\frac{\log{x}}{\log{q}}.
\end{equation}
Thus we have shown that
\begin{equation}
\left|\psi(x;q,a)-\frac{x}{\psi(q)}\right|\le 12\epsilon x +(1+3\epsilon)x\sum_{\chi\ne \chi_0}\sum_{\rho}^*\exp(-M\lambda_\rho),
\end{equation}
where $\displaystyle\sum^*$ represents a sum over all zeros of $L(s,\chi)$ in 
\begin{equation}
\mathcal{R}=\left\{z:1-\frac{R}{\log{q}}\le \Re(z)\le 1,\Im(z)\le R\right\},\label{eq:Rdef}
\end{equation}
with $R$ a  constant (independent of $x$ and $q$).
\section{Zero Density Estimates}
We wish to estimate the sum
\[\sum_{\chi\ne\chi_0}\sum_{\rho\in \mathcal{R}\cap Z(\chi)}\exp(-M\lambda_\rho),\]
where
\[\mathcal{Z}(\chi):=\{\rho:L(\rho,\chi)=0\},\]
\[\mathcal{R}=\left\{z:1-\frac{R}{\log{q}}\le \Re(z)\le 1,\Im(z)\le R\right\}.\]
We do this by obtaining a zero density estimate for zeros in $\mathcal{R}$ by means of different weighted sums over zeros of $L(s,\chi)$. We note that by the log-free zero density estimate given in Proposition \ref{Prpstn:ZeroDensity} this sum is finite for any $M\in \mathbb{R}$. We specifically wish to show that the sum is $<1$ when $M=7.999$.

Similar sums have been looked at in the estimation of Linnik's constant. We will broadly follow the approach of Heath-Brown in \cite{HB}, but most of the estimates must be extended to cover a region where $\Im(\rho)\ll 1$ instead of $\Im(\rho) \ll \mathcal{L}^{-1}$.

We split $\mathcal{R}$ vertically into smaller rectangles each with height $1/\mathcal{L}$. We put
\begin{equation}\mathcal{R}_m:=\left\{z:1-\frac{R}{\mathcal{L}}\le \Re(z) \le 1,\frac{m-1/2}{\mathcal{L}}\le|\Im(z)|\le \frac{m+1/2}{\mathcal{L}}\right\}.\end{equation}
We label our non-principle characters (mod $q$) as $\chi^{(1)},\chi^{(2)},\dots$ in some order. For each character $\chi^{(j)}$, and for each rectangle $\mathcal{R}_m$ for which $L(s,\chi^{(j)})$ has a zero in $\mathcal{R}_m$ we pick a zero of $L(s,\chi^{(j)})$ with greatest real part, which we label $\rho^{(j,m)}$.

We introduce the notation
\begin{equation}\rho^{(j,m)}=\beta^{(j,m)}+i\gamma^{(j,m)},\qquad 1-\beta^{(j,m)}=\frac{\lambda^{(j,m)}}{\log{q}},\qquad \gamma^{(j,m)}=\frac{\nu^{(j,m)}}{\log{q}}.\end{equation}
We also specifically label special zeros $\rho_1$, $\rho'_1$ and $\rho_2$. We let $\rho_1$ be a zero  of $\prod_{\chi}L(s,\chi)$ which is in $\mathcal{R}$ and has largest real part. We let $\chi_1$ be the corresponding character. We let $\rho_2$ be a zero  of $\prod_{\chi,\chi\ne \chi_1,\overline{\chi}_1}L(s,\chi)$ which is in $\mathcal{R}$ and has largest real part. We let $\rho_1'$ be a zero of $L(s,\chi_1)$ which is in $ \mathcal{R}$ and is not $\rho_1$ or $\overline{\rho}_1$ but otherwise has largest real part. If $\rho_1$ is not a simple zero we simply have $\rho_1'=\rho_1$. 

For simplicity we argue as if $\rho_1,\rho_1',\rho_2$ all exist. Our argument is simpler and stronger if any of these do not exist.

We now wish to estimate separately a weighted sum over rectangles  and a weighted sum over zeros in any such rectangle. Specifically we wish to prove the following three lemmas:
\begin{lmm}\label{lmm:FirstZeroDensity}
For any $\delta>0$ any $m\in \mathbb{Z}$ and any constant $K>0$ we have for $q>q_0(\delta)$ that
\[\sum_{\rho\in \mathcal{R}_m\cap Z(\chi^{(j)})}B_1(\lambda_\rho)\le C_1(\lambda^{(j,m)})\] 
where
\begin{align*}
B_1(\lambda)&=\frac{\left(1-\exp(-K\lambda)\right)^{2}}{\lambda^2+1/4},\\
C_1(\lambda)&=\frac{\phi_\chi(1-\exp(-2K\lambda))}{2\lambda}+\frac{2K\lambda-1+\exp(-2K\lambda)}{2\lambda^2}+\delta.
\end{align*}
\end{lmm}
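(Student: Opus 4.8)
The plan is to reuse the Linnik-type explicit-formula-and-positivity machinery, in the form used by Heath-Brown in \cite{HB}[Section 13], but with the auxiliary kernel adapted to the rectangle $\mathcal{R}_m$ (which has height $\mathcal{L}^{-1}$) rather than to a box of height $\mathcal{L}^{-2}$. If $L(s,\chi^{(j)})$ has no zero in $\mathcal{R}_m$ the left-hand side is empty and there is nothing to prove, so assume $\rho^{(j,m)}$ exists; write $\chi=\chi^{(j)}$, $\rho_0=\rho^{(j,m)}=1-\lambda_0/\mathcal{L}+i\gamma_0$ with $\lambda_0=\lambda^{(j,m)}$, and replace $\chi$ by the primitive character inducing it, which does not affect zeros with $\Re(\rho)\ge 1-R/\mathcal{L}$.

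First I would record the elementary observation that $B_1(\lambda_\rho)$ is, up to the factor $\mathcal{L}^{-2}$, the squared modulus at $\rho$ of the kernel
\[
G(s)=\frac{1-X^{\,s-1}}{\,s-s_0\,},\qquad X=q^{K},\quad s_0=1+i\tfrac{m}{\mathcal{L}}
\]
(and its conjugate for the lower half of $\mathcal{R}_m$). Indeed, for $\rho\in\mathcal{R}_m$ one has $X^{\Re(\rho)-1}=e^{-K\lambda_\rho}\in(0,1)$, so $|1-X^{\rho-1}|\ge 1-e^{-K\lambda_\rho}$ by the reverse triangle inequality, while $|\Im(s_0-\rho)|\le\tfrac1{2\mathcal{L}}$ gives $|s_0-\rho|^{2}\le(\lambda_\rho^{2}+\tfrac14)\mathcal{L}^{-2}$; hence
\[
B_1(\lambda_\rho)=\frac{(1-e^{-K\lambda_\rho})^{2}}{\lambda_\rho^{2}+1/4}\ \le\ \frac{|G(\rho)|^{2}}{\mathcal{L}^{2}},
\]
the constant $\tfrac14=(\tfrac12)^{2}$ being exactly the largest normalised vertical offset available inside $\mathcal{R}_m$ (this is where the $\mathcal{R}_m$-adapted choice of $s_0$ enters). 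So it suffices to bound $\mathcal{L}^{-2}\sum_{\rho\in\mathcal{R}_m}|G(\rho)|^{2}$ by $C_1(\lambda_0)$.

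Next I would feed $G$ (together with a short auxiliary Dirichlet polynomial, as in Heath-Brown) into the explicit formula: writing the relevant prime sum $\sum_{n\le X^{2}}\Lambda(n)\chi(n)(\cdots)$ via the Hadamard/partial-fraction expansion of $-L'/L(s,\chi)$, each zero $\rho$ contributes one term, and a standard device (Plancherel applied to the mollified prime sum, equivalently pairing $\chi$ with $\overline\chi$) renders these terms non-negative, so that all zeros outside $\mathcal{R}_m$ may simply be discarded. The source side then splits into the conductor term $\tfrac12\log q$ with its companion archimedean contribution, which after division by $\mathcal{L}^{2}$ collapse to $\frac{2K\lambda_0-1+e^{-2K\lambda_0}}{2\lambda_0^{2}}+O_K(\mathcal{L}^{-1})$, and a mean-value estimate for the auxiliary polynomial, which produces $\frac{\phi_\chi(1-e^{-2K\lambda_0})}{2\lambda_0}$. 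The dichotomy in $\phi_\chi$ is that of the admissible length of the auxiliary polynomial: when $q$ is cube-free or $\mathrm{ord}(\chi)\le\log q$ one may take it longer, giving the constant $\tfrac14$, via Postnikov--Gallagher type estimates for such characters, and one is limited to $\tfrac13$ otherwise. Each auxiliary error here — the $\Gamma$-factor, the truncation of the contour, the passage to the primitive character — is $O_K(\mathcal{L})=o(\delta\mathcal{L}^{2})$, which is why the conclusion requires $q>q_0(\delta)$ with $\delta$ absorbing these.

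The hard part will be the bookkeeping in the last step: recovering the \emph{exact} functions $B_1$ and $C_1$ rather than mere $O(1)$-estimates, correctly tracking how $\lambda^{(j,m)}$ enters (through the factors $e^{-K\lambda_0}$ and $e^{-2K\lambda_0}$ picked up when the kernel and its transform are evaluated at points whose distance from the leading zero is governed by $\lambda_0$), and pinning down the $\phi_\chi$ numerology, which rests squarely on the cited short-character-sum input.
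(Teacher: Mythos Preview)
Your proposal correctly identifies the kernel whose squared modulus dominates $B_1(\lambda_\rho)$ on $\mathcal{R}_m$ --- your $G$ is essentially the square root of the paper's $H_2(z)=((1-e^{-Kz})/z)^2$, evaluated at $z=(s_0-\rho)\mathcal{L}$ with $s_0=1+im/\mathcal{L}$ --- and the $1/4$ in the denominator of $B_1$ does arise exactly as you say, from the maximal vertical offset $\tfrac12\mathcal{L}^{-1}$ inside $\mathcal{R}_m$. But the mechanism you describe for bounding $\sum_\rho|G(\rho)|^2$ is not the one the paper uses, and your account of where the two pieces of $C_1$ come from is inverted.

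The paper does not use Plancherel on a mollified prime sum or any auxiliary Dirichlet polynomial here (that machinery belongs to the proofs of Lemmas~\ref{lmm:OldZeroDensity} and~\ref{lmm:NewZeroDensity}, not this one). Instead it introduces a second kernel $h_1(t)=\sinh((K-t)\lambda_0)$ on $[0,K]$, with Laplace transform $H_1$, and checks the pointwise identity $\Re(H_1(it))=\tfrac{\lambda_0 e^{K\lambda_0}}{2}|H_2(\lambda_0+it)|$ on the imaginary axis. A maximum-principle argument (\cite{HB}[Lemma~4.1]) then propagates this to $\Re(H_1(z))\ge \tfrac{\lambda_0 e^{K\lambda_0}}{2}|H_2(\lambda_0+z)|$ throughout $\Re(z)\ge 0$. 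This is the step that produces the positivity allowing zeros outside $\mathcal{R}_m$ to be discarded --- not a mean-value or pairing device. Applying \cite{HB}[Lemmas~5.2--5.3] to $\sum_\rho\Re(H_1((s-\rho)\mathcal{L}))$ at $s=1-\lambda_0/\mathcal{L}+im/\mathcal{L}$ then yields the bound $\tfrac{h_1(0)\phi_\chi}{2}+H_1(-\lambda_0)+\delta$; here it is the \emph{conductor} term $h_1(0)\phi_\chi/2$ that gives $\frac{\phi_\chi(1-e^{-2K\lambda_0})}{2\lambda_0}$ (the $\phi_\chi$ entering via the moment bounds underlying Lemma~5.2, not via a polynomial length), while the \emph{prime-sum} term $H_1(-\lambda_0)$ gives $\frac{2K\lambda_0-1+e^{-2K\lambda_0}}{2\lambda_0^2}$ --- the opposite of what you wrote. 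Without the $H_1$/$H_2$ comparison via the maximum principle, it is not clear your sketch can recover these exact constants.
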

\begin{lmm}\label{lmm:OldZeroDensity}
let $(\chi^{(i)})_{i\in I}$ be a set of characters (mod $q$). Then for any $\delta>0$ and $q>q_0(\delta)$ we have
\[\sum_{m\in \mathbb{Z},i\in\mathbb{I}}B_2(\lambda^{(j,m)})\le C_2\]
where
\begin{align*}
B_2(\lambda)&=\left(\frac{e^{2\lambda x_1}+e^{2\lambda x_0}}{x_1-x_0}+\frac{e^{2\lambda u_1}+e^{2\lambda u_0}}{u_1-u_0}\right)^{-1},\\
C_2&=\left(\frac{x_1+x_0-v-u_1}{2w(v-u_1)}\right)(1+G_2)+\delta,\\
G_2&\text{ is defined in \eqref{eq:G2def},}
\end{align*}
and $x_1,x_0,v,u_1,u_0,w$ are all constants $>0$ satisfying
\[x_1>x_0,\qquad x_0>v+w+1/3,\qquad v>u_1,\qquad u_1>u_0,\qquad u_0>2w+1/3.\]
In particular, we have
\[\sum_{j,m}\left(\frac{e^{3.243...\lambda^{(j,m)}}+e^{2.823...\lambda^{(j,m)}}}{0.21}+\frac{e^{1.238...\lambda^{(j,m)}}+e^{1.126...\lambda^{(j,m)}}}{0.056}\right)^{-1}\le 11.826...\]
\end{lmm}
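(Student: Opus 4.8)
The plan is to prove Lemma~\ref{lmm:OldZeroDensity} by a mean-value (large-sieve) zero-detection argument of the kind used in work on Linnik's constant, following Heath-Brown \cite{HB}, but with every estimate extended from the range $|\Im\rho|\ll\mathcal{L}^{-1}$ to the range $|\Im\rho|\ll R$. Concretely, one detects each zero $\rho=\beta+i\gamma$ of $L(s,\chi)$ by a mollifier identity: writing $M_\chi(s)=\sum_{n\le q^{v}}\mu(n)\chi(n)n^{-s}$ one has $L(s,\chi)M_\chi(s)=1+\sum_{n>q^{v}}b_n\chi(n)n^{-s}$ with $b_n=-\sum_{d\mid n,\ d\le q^{v}}\mu(d)$ and $|b_n|\le d(n)$, so evaluating at $s=\rho$ shows that a smoothly truncated Dirichlet polynomial built from the $b_n$ has absolute value close to $1$ at $\rho$. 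Running this with two different length windows --- one governed by $x_0,x_1$ and one by $u_0,u_1$ --- and combining by Cauchy--Schwarz produces the two-term shape of $B_2(\lambda)^{-1}$; the parameter $w$ controls the smoothing of the truncations so that the windows have a fixed, $\rho$-independent length.

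In more detail: (i) for each representative zero $\rho^{(i,m)}$ form the detecting quantity from the mollifier identity evaluated at a point within $O(\mathcal{L}^{-1})$ of $\rho^{(i,m)}$, using that $L(s,\chi)$ does not oscillate on scale $\mathcal{L}^{-1}$ inside $\mathcal{R}$; when $\lambda^{(i,m)}$ is so large that the smooth tails of the truncation overwhelm the main term, $B_2(\lambda^{(i,m)})$ is already exponentially small and the contribution is negligible, so we may assume $\lambda^{(i,m)}$ bounded. (ii) Weight each detecting identity by the reciprocal of the mean square of its Dirichlet polynomial and apply Cauchy--Schwarz, so that $\rho^{(i,m)}$ contributes $B_2(\lambda^{(i,m)})$ up to the loss coming from the second moment of the $b_n$. (iii) Sum over $i\in I$ and $m$ --- we may enlarge the sum over $I$ to the sum over all non-principal characters, since the mean square only increases, which gives the uniformity in $I$ --- and apply the large-sieve mean value $\sum_{\chi\bmod q}\int_{|t|\le R}\bigl|\sum_{n\sim N}a_n\chi(n)n^{-it}\bigr|^2\,dt\ll(N+qR)\sum_n|a_n|^2$; the hypotheses $x_0>v+w+1/3$ and $u_0>2w+1/3$ (with $v>u_1>u_0$) are precisely what places all the windows above the threshold at which the relevant character-sum input (the same $\phi_\chi\le1/3$ as in Lemma~\ref{lmm:FirstZeroDensity}) is effective, so the mean value loses nothing. (iv) The second moments $\sum_{q^{x_0}\le n\le q^{x_1}}|b_n|^2$ and its $u$-analogue equal $(1+G_2)$ times their leading terms, $G_2$ being the quantity in \eqref{eq:G2def}; the diagonal sums $\sum_{q^{x_0}\le n\le q^{x_1}}n^{-2\beta}$ are evaluated dyadically, the dominant block contributing $e^{2\lambda x_1}$ (or $e^{2\lambda x_0}$ when $\beta>1/2$) and the number of blocks the factor $1/(x_1-x_0)$; this assembles $B_2(\lambda)^{-1}$, while the normalisation $\frac{x_1+x_0-v-u_1}{2w(v-u_1)}$ comes out of the length of the $v,w$-window and the $1/w$ from the smoothing. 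Finally, plug in admissible numerical choices ($x_1\approx1.62$, $x_0\approx1.41$, $u_1\approx0.62$, $u_0\approx0.56$, with suitable $v,w$) and absorb the $\delta$'s to get $11.826\ldots$.

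I expect the main obstacle to be exactly the passage from $|\Im\rho|\ll\mathcal{L}^{-1}$ to $|\Im\rho|\ll R$. In Heath-Brown's setting all the relevant zeros sit at essentially one height, so the detecting polynomials behave like ordinary Dirichlet series and the mean value is, in effect, pointwise; here the zeros are spread over a unit-length interval in $t$, so one must genuinely invoke the large-sieve mean value over $|t|\le R$, partition $\mathcal{R}$ into the $\mathcal{L}^{-1}$-tall sub-rectangles $\mathcal{R}_m$ and control the cross-terms between distinct $\mathcal{R}_m$ (this is why the representative zeros are organised rectangle-by-rectangle and the whole bookkeeping runs through the $\rho^{(j,m)}$), and --- since the lemma's conclusion is fully explicit --- track every constant: the $qR$ term in the mean value, the $\phi_\chi$-threshold, and the smoothing losses, so that the bound $11.826\ldots$ actually materialises and feeds the later reduction cleanly. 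A subsidiary but essential point is checking that the mollifier identity still detects a zero when evaluated $O(\mathcal{L}^{-1})$ off it, which is the standard non-oscillation of $L(s,\chi)$ on that scale inside $\mathcal{R}$.
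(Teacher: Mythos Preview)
Your high-level strategy --- detect zeros via a mollifier identity, then control the detecting polynomials by a mean-value inequality --- is the right genre, but several of the specific claims are off and would not produce the stated $B_2$, $C_2$, or the parameter constraints.

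First, the paper does not use a single mollifier $M_\chi$ and a large-sieve integral over $|t|\le R$; it uses \emph{two} sieve weights $\psi_d$ (Graham's weight, parameters $U_1,V$) and $\theta_d$ (Selberg-type, parameter $W$), splits the detecting sum as $a_{n,j,m}b_n$, and applies the \emph{duality principle} rather than a large-sieve mean value. The constraint $x_0>v+w+1/3$ and $u_0>2w+1/3$ do not come from a $\phi_\chi$-threshold in a character-sum input to a large sieve; they arise because the \emph{off-diagonal} terms $\chi_{(1)}\ne\chi_{(2)}$ are killed by shifting a contour past $\Re s=0$ and invoking the subconvexity bound $L(s,\chi)\ll q^{\phi_\chi/k+1/k^2}$. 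Your step~(iii) would not produce these constraints in that form.

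Second, the four-term shape of $B_2(\lambda)^{-1}$ does not come from ``two length windows'' handled by Cauchy--Schwarz across windows; it comes from Jutila's device of \emph{integrated} exponential weights, replacing the smoothing $e^{-n/X}-e^{-n\mathcal{L}^2/U}$ by its average over $x\in[x_0,x_1]$ and $u\in[u_0,u_1]$. This integration is precisely what converts the single-exponential weight of \cite{HB} into the sum of two pairs in $B_2$; without it you get the Heath-Brown form, not this one.

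Third, your description of $G_2$ is incorrect: it is not a ratio of second moments of the $b_n$. The quantity $G_2$ bounds the \emph{same-character cross terms} in the dual problem, i.e.\ $\max_{\rho_{(1)}}\sum_{\rho_{(2)}:\chi_{(2)}=\chi_{(1)}}|w_{(1)}w_{(2)}j_2(\rho_{(1)},\rho_{(2)})|$, where $j_2$ is the pole residue arising when $\chi_{(1)}=\chi_{(2)}$. The extension from $|\Im\rho|\ll\mathcal{L}^{-1}$ to $|\Im\rho|\ll 1$ lives entirely inside the estimation of $G_2$: one sums the contributions $G_{2,c}$ over the height-differences $c=|m_1-m_2|$, and the factor $|2-\rho_{(1)}-\overline{\rho}_{(2)}|^{-2}$ makes this sum converge. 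The prefactor $\frac{x_1+x_0-v-u_1}{2w(v-u_1)}$ is $\sum_n|b_n|^2$ (from Graham's evaluation of $\sum(\sum_{d|n}\psi_d)^2 n^{-1}$), times $1/w$ from $\sum\theta_{d_1}\theta_{d_2}/[d_1,d_2]$.
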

\begin{lmm}\label{lmm:NewZeroDensity}
Let $0\le \lambda\le 2$ be such that
\[G(\lambda-\lambda_{11})>g(0)/6\qquad \text{and} \qquad (G(\lambda-\lambda_{11})-g(0)/6)^2>G(-\lambda_{11})g(0)/6.\]
Then for any $\delta>0$ and $q>q_0(\delta,g)$
\[\sum_{\substack{j,m\\ \lambda^{(j,m)}\le \lambda}}1\le \frac{G(-\lambda_{11})G_3}{(G(\lambda-\lambda_{11})-g(0)/6)^2-G(-\lambda_{11})g(0)/6}+\delta\]
Where 
$g:[0,\infty)\rightarrow \mathbb{R}$ satisfies Condition \ref{Cndtn:NewCondition1} and \ref{Cndtn:NewCondition2}  given on page \pageref{Cndtn:NewCondition1}.
$G$ is the Lamplace transform of $g$.
$G_3$ is defined in equation \eqref{eq:G3def}.

In particular, we have the bounds given by Table \ref{Table:NewZeroDensityTable} on page \pageref{Table:NewZeroDensityTable}.
\end{lmm}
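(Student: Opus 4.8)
\quad The plan is to extend the log-free zero-density machinery of Heath-Brown \cite{HB}[Section~13] from the range $|\Im\rho|\ll\mathcal{L}^{-1}$ to the range $|\Im\rho|\ll 1$. There are three moves: a weighted explicit formula for a mollified prime sum, recentred so that its transform concentrates near the extremal zero $\rho_1$; a positivity (character-orthogonality) estimate bounding the resulting sum over zeros by the main term of the principal character; and a Cauchy--Schwarz step converting the weighted zero-sum into the count $\sum_{j,m,\ \lambda^{(j,m)}\le\lambda}1$.

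First I would fix the kernel. Conditions~\ref{Cndtn:NewCondition1} and \ref{Cndtn:NewCondition2} are arranged so that $g\ge 0$, its Laplace transform $G$ is entire of finite exponential type (so the mollifier $g(\log n/\mathcal{L})$ restricts $n$ to a bounded range of prime powers), $G$ decays rapidly on vertical lines, and $\Re G\ge 0$ on the region into which the zeros of $\mathcal{R}$ are mapped. Applying Mellin inversion to $\sum_n\Lambda(n)\chi^*(n)\,n^{\lambda_{11}/\mathcal{L}}\,g(\log n/\mathcal{L})$ and shifting the contour to $\Re s=-\tfrac12$ exactly as in Section~\ref{sec:NonSiegel}, one obtains, with error negligible by Propositions~\ref{Prpstn:ZeroFree} and \ref{Prpstn:ZeroDensity}, the representation $-\mathcal{L}\sum_{\rho}G\big((1-\rho)\mathcal{L}-\lambda_{11}\big)$ for $\chi\ne\chi_0$. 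The twist by $n^{\lambda_{11}/\mathcal{L}}$ recentres the transform at the translate of the critical line through $\Re(\rho_1)$, so that the principal character contributes the main term $\mathcal{L}\,G(-\lambda_{11})$.

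Next comes the positivity input. The prime sum over $n\equiv a\pmod q$ with $a=1$ is a sum of non-negative terms, so $\tfrac{1}{\phi(q)}\sum_{\chi}\overline{\chi(1)}\sum_n\Lambda(n)\chi(n)\,n^{\lambda_{11}/\mathcal{L}}\,g(\log n/\mathcal{L})\ge 0$; inserting the explicit formula and isolating $\chi_0$ gives an upper bound in terms of $G(-\lambda_{11})$ for $\sum_{\chi\ne\chi_0}\sum_{\rho}\Re G\big((1-\rho)\mathcal{L}-\lambda_{11}\big)$. Using $\Re G\ge 0$ one discards all but the distinguished zeros $\rho^{(j,m)}$, treating $\chi_1,\overline{\chi_1}$ and the special zeros $\rho_1,\rho_1',\rho_2$ separately as in the labelling fixed above; this yields a bound of the form $G(-\lambda_{11})G_3$ (with $G_3$ as in \eqref{eq:G3def}) for $\sum_{j,m}\Re G\big(\lambda^{(j,m)}-\lambda_{11}-i\nu^{(j,m)}\big)$, together with a companion estimate controlling the pairwise interactions of the distinguished zeros in which each off-diagonal contribution costs at most $g(0)/6$ (the $1/6$ being $\phi_\chi\le\tfrac13$ times the factor $\tfrac12$ from pairing $\chi$ with $\overline{\chi}$). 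Since $G$ is decreasing on $\mathbb{R}$, any zero with $\lambda^{(j,m)}\le\lambda$ satisfies $\Re G\big(\lambda^{(j,m)}-\lambda_{11}-i\nu^{(j,m)}\big)\ge G(\lambda-\lambda_{11})-g(0)/6$. Feeding this into Cauchy--Schwarz produces the quadratic inequality $\big(G(\lambda-\lambda_{11})-g(0)/6\big)^2 S\le G(-\lambda_{11})G_3+G(-\lambda_{11})(g(0)/6)S+o(1)$ in $S:=\sum_{j,m,\ \lambda^{(j,m)}\le\lambda}1$; the hypotheses $G(\lambda-\lambda_{11})>g(0)/6$ and $\big(G(\lambda-\lambda_{11})-g(0)/6\big)^2>G(-\lambda_{11})g(0)/6$ guarantee that the coefficient of $S$ on the left strictly exceeds that on the right, so one solves for $S$ and arrives at the claimed bound, with $\delta$ absorbing the $o(1)$. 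Choosing $g$ and $\lambda$ numerically to verify the hypotheses then yields Table~\ref{Table:NewZeroDensityTable}.

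The step I expect to be the main obstacle---and the reason this does not follow from \cite{HB} directly---is the passage to $|\Im\rho|\ll 1$. There are now far more zeros off the real axis, so $G$ must be chosen with genuine decay in the imaginary direction for the positivity argument to keep its strength, and every error term in the weighted explicit formula (the shift to $\Re s=-\tfrac12$, the trivial zeros, the tail $|\Im\rho|>R$ controlled by Proposition~\ref{Prpstn:ZeroDensity}, and the replacement of $\chi$ by its inducing primitive character) must be made uniform in $q$ while keeping all constants small enough that $S<1$ at $M=7.999$. Reconciling the competing requirements of non-negativity of $g$, rapid decay of $G$, and $\Re G\ge 0$ on $\mathcal{R}$ with these numerical targets is precisely what forces the detailed form of Conditions~\ref{Cndtn:NewCondition1} and \ref{Cndtn:NewCondition2}.
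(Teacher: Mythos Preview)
Your final quadratic inequality in $S=N^*(\lambda)$ is exactly the one the paper obtains, and you correctly anticipate that the $g(0)/6$ arises from $\phi_\chi\le\tfrac13$ together with a factor $\tfrac12$. But the mechanism you propose to reach that inequality does not work, and it differs from the paper in a way that matters.

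The positivity argument at $a=1$ gives a single \emph{linear} inequality: roughly
\[
\sum_{\chi\ne\chi_0}\sum_{\rho}\Re\,G\big((1-\rho)\mathcal{L}-\lambda_{11}\big)\le G(-\lambda_{11})+o(1).
\]
Here $(1-\rho^{(j,m)})\mathcal{L}=\lambda^{(j,m)}-i\nu^{(j,m)}$ with $\nu^{(j,m)}$ ranging up to $R\mathcal{L}$, so the imaginary part of the argument of $G$ is \emph{large}. Your claim that $\Re\,G(\lambda^{(j,m)}-\lambda_{11}-i\nu^{(j,m)})\ge G(\lambda-\lambda_{11})-g(0)/6$ then has no justification: Condition~2 gives only $\Re\,G\ge 0$ in the right half-plane, and for large imaginary part $\Re\,G$ is typically tiny, not close to the real-axis value $G(\lambda-\lambda_{11})$. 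So from the $a=1$ positivity you cannot extract the per-zero lower bound needed to make Cauchy--Schwarz bite, nor does this route produce the bilinear structure in which $G_3$ and the off-diagonal $g(0)/6$ naturally appear.

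The paper's device is precisely to kill this imaginary shift. One works with
\[
K(s,\chi)=\sum_n\Lambda(n)\,\Re\!\left(\frac{\chi(n)}{n^s}\right)g\!\left(\frac{\log n}{\mathcal{L}}\right)
\]
evaluated at $s=\beta_{11}+i\gamma^{(j,m)}$, i.e.\ the height is chosen \emph{per zero}. Then the explicit-formula bound (\cite{HB}, Lemma~5.2) gives
\[
\mathcal{L}^{-1}K(\beta_{11}+i\gamma^{(j,m)},\chi^{(j)})\le \tfrac{g(0)\phi_{\chi^{(j)}}}{2}-G(\lambda^{(j,m)}-\lambda_{11})+\delta_1,
\]
with $G$ taken at the \emph{real} point $\lambda^{(j,m)}-\lambda_{11}$; monotonicity of $G$ on $\mathbb{R}$ now legitimately gives the lower bound $G(\lambda-\lambda_{11})-g(0)/6$ for each term with $\lambda^{(j,m)}\le\lambda$. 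Summing over $(j,m)$ and rewriting the right-hand side as a single sum over $n$ with coefficients $\sum_{j,m}\chi^{(j)}(n)n^{-i\gamma^{(j,m)}}$, one applies Cauchy--Schwarz \emph{in the $n$-variable} (a Hal\'asz-type step), obtaining $\Sigma_1^{1/2}\Sigma_2^{1/2}$ with $\Sigma_1\le G(-\lambda_{11})+\delta$ and $\Sigma_2$ the bilinear sum $\sum_{j_1,m_1,j_2,m_2}\mathcal{L}^{-1}K(\beta_{11}+i(\gamma^{(j_1,m_1)}-\gamma^{(j_2,m_2)}),\chi^{(j_1)}\overline{\chi}^{(j_2)})$. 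The off-diagonal pairs $j_1\ne j_2$ are bounded by $g(0)/6$ each via \cite{HB} Lemma~5.2, giving the $N^*(\lambda)^2 g(0)/6$ term; the diagonal pairs $j_1=j_2$ give $|\Re\,G(-\lambda_{11}+i(\nu^{(j_1,m_1)}-\nu^{(j_1,m_2)}))|$ via \cite{HB} Lemma~5.3, and $G_3$ is defined as the worst row-sum of these. This is where $G_3$ actually comes from, and why the extension to $|\Im\rho|\ll1$ costs a factor (one must sum $|\Re\,G(-\lambda_{11}+it)|$ over $O(\mathcal{L})$ values of $t$ rather than $O(1)$).

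In short: replace the $a=1$ positivity by the per-zero bound at the shifted point $s=\beta_{11}+i\gamma^{(j,m)}$, and apply Cauchy--Schwarz over $n$ rather than over zeros. With those two changes your outline becomes the paper's proof.
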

We will now proceed to prove each of these Lemmas in turn.

We note here that we can easily ensure the $L$ given in \cite{HB}[Lemma 6.1] satisfies $R\le L\le \frac{1}{10}\mathcal{L}$ rather than just $L\le \frac{1}{10}\mathcal{L}$ by following exactly the same argument but with this restriction. This means that all the results of Heath-Brown \cite{HB} and Xylouris \cite{Xylouris} which consider zeros in the region
\begin{equation}1-\frac{\log\log{\mathcal{L}}}{3\mathcal{L}}\le \sigma \le 1,\qquad |t|\le L\end{equation}
also apply to the zeros which we consider in $\mathcal{R}$.
\subsection{First Zero Density Estimate}
We now consider zeros within one of the rectangles $\mathcal{R}_m$. We follow almost identically the argument of Heath-Brown in \cite{HB}[Lemma 13.3].

We put
\begin{align}
h_1(t)&=
\begin{cases}
\sinh(K-t)\lambda,\qquad&0\le t\le K\\
0,&t\ge K,
\end{cases}\\
H_1(z)&=\int_0^\infty e^{-zt}h_1(t)dt=\frac{1}{2}\left(\frac{e^{K\lambda}}{\lambda+z}+\frac{e^{-K\lambda}}{\lambda-z}-\frac{2\lambda e^{-Kz}}{\lambda^2-z^2}\right),\\
H_2(z)&=\left(\frac{1-e^{-Kz}}{z}\right)^2,
\end{align}
for some constants $K\in \mathbb{R}$ and $\lambda\in\mathbb{C}$, which will be declared later.

We note that
\begin{equation}\Re(H_1(it))=\frac{\lambda e^{K\lambda}}{2}\left|\frac{1-e^{-K(\lambda+it)}}{\lambda+it}\right|^2=\frac{\lambda e^{K\lambda}}{2}\left|H_2(\lambda+it)\right|.\end{equation}
Since $H_1(z)$ and $H_2(\lambda+z)$ tend uniformly to zero in $\Re(z)\ge 0$ as $|z|\rightarrow \infty$, and $\Re(H_1(z))\ge \lambda e^{K\lambda}|H_2(\lambda+z)|/2$ when $\Re(z)=0$, by \cite{HB}[Lemma 4.1] we have
\begin{equation}
\Re(H_1(z))\ge \frac{\lambda e^{K\lambda}}{2}|H_2(\lambda+z)|
\end{equation}
whenever $\Re(z)\ge 0$.

We fix a character $\chi=\chi^{(j)}\ne \chi_0$ and take $\lambda=\lambda^{(j,k)}$. Therefore $L(s,\chi)$ has no zeros in the region $\{\sigma> 1-\lambda/\mathcal{L}\}\cap \mathcal{R}_m$.

Thus we have
\begin{equation}\sum_{\rho\in \mathcal{R}_m\cap Z(\chi)}|H_2((1-\rho+im/\mathcal{L})\mathcal{L})|\le \frac{2e^{-K\lambda}}{\lambda}\sum_{\rho\in \mathcal{R}_m\cap Z(\chi)}\Re(H_1((s-\rho)\mathcal{L})),\end{equation}
where $s=1-\lambda/\mathcal{L}+im/\mathcal{L}$.

By \cite{HB}[Lemma 5.2] and \cite{HB}[Lemma 5.3] we have (recalling that $|m|\ll \mathcal{L}$ so $|\Im(s)|\le \mathcal{L}$ for $q$ sufficiently large), for any given $\delta>0$ and $q>q(\delta)$
\begin{align}
\sum_{\rho\in \mathcal{R}_m\cap Z(\chi)}\Re(H_1((s-\rho)\mathcal{L}))&\le \frac{h_1(0)\phi_\chi}{2}+\mathcal{L}^{-1}\left|\sum_{n=1}^{\infty}\Lambda(n)\Re\left(\frac{\chi(n)}{n^s}\right)h_1(\mathcal{L}^{-1}\log{n})\right|+\delta\nonumber\\
&\le \frac{h_1(0)\phi_\chi}{2}+\mathcal{L}^{-1}\sum_{n=1}^{\infty}\Lambda(n)\left(\frac{\chi_0(n)}{n^{\Re(s)}}\right)h_1(\mathcal{L}^{-1}\log{n})+\delta\nonumber\\
&\le \frac{h_1(0)\phi_\chi}{2}+|H_1((\Re(s)-1)\mathcal{L})|+2\delta.
\end{align}
This gives
\begin{equation}\sum_{\rho\in \mathcal{R}_m\cap Z(\chi)}\left|\frac{1-e^{-K\lambda_\rho-iK(m-\gamma_\rho \mathcal{L})}}{\lambda_\rho+i(m-\gamma_\rho \mathcal{L})}\right|^2\le \frac{\phi_\chi(1-e^{-2K\lambda})}{2\lambda}+\frac{2K\lambda-1+e^{-2K\lambda}}{2\lambda^2}+2\delta.\end{equation}
Since $\rho\in \mathcal{R}_m$ we have $|m-\gamma_\rho \mathcal{L}|\le 1/2$. Thus, recalling that $\chi=\chi^{(j)}$ and $\lambda=\lambda^{(j,m)}$, we have
\begin{equation}\sum_{\rho\in \mathcal{R}_m\cap Z(\chi^{(j)})}\frac{(1-e^{-K\lambda_\rho})^2}{\lambda_\rho^2+1/4}\le \frac{\phi_\chi(1-e^{-2K\lambda^{(j,m)}})}{2\lambda^{(j,m)}}+\frac{2K\lambda^{(j,m)}-1+e^{-2K\lambda^{(j,m)}}}{2(\lambda^{(j,m)})^2}+2\delta.\end{equation}
Hence Lemma \ref{lmm:FirstZeroDensity} holds.
\subsection{Second Zero Density Estimate}
We now prove Lemma \ref{lmm:OldZeroDensity}. The proof uses ideas originally due to Graham \cite{Graham1}. We follow the method of \cite{HB}[Section 11], but extend the result to a weighted sum over zeros rather than just characters. We do this by using integrated exponential weights instead of exponential weights, an idea originally due to Jutila \cite{Jutila}.

We adopt similar notation to that of \cite{HB}[Section 11]. We put
\begin{equation}U_0=q^{u_0}, U_1=q^{u_1}, X_0=q^{x_0}, X_1=q^{x_1}, V=q^v, W=q^w\end{equation}
with constant exponents $0<w<u_0<u_1<v<x_0<x_1$ to be declared later. We put
\begin{equation}U=q^{u},X=q^{x}\end{equation}
with $u_0\le u\le u_1$ and $x_0\le x\le x_1$ parameters which we will integrate over.

We define
\begin{equation}\label{psi}
\psi_d=
\begin{cases}
\mu(d),\qquad&1\le d \le U_1\\
\mu(d)\frac{\log{V/d}}{\log{V/U_1}},&U_1\le d\le V\\
0,&d\ge V,
\end{cases}
\end{equation}
and
\begin{equation}\label{phi}
\theta_d=
\begin{cases}
\mu(d)\frac{\log{W/d}}{\log{W}},&1\le d\le W\\
0,&d\ge W.
\end{cases}
\end{equation}
We wish to study the sum
\begin{equation}J(\rho^{(j,m)},\chi):=w_{j,m}\sum_{n=1}^\infty\left(\sum_{d|n}\psi_d\right)\left(\sum_{d|n}\theta_d\right)\chi(n)n^{-\rho^{(j,m)}}j(n),\end{equation}
where
\begin{equation}j(n)=\left(\frac{\int_{x_0}^{x_1}\int_{u_0}^{u_1}(e^{-n/X}-e^{n\mathcal{L}^2/U})dudx}{(u_1-u_0)(x_1-x_0)}\right)\end{equation}
and $w_{j,m}$ are some non-negative weights.

We start with the following weighted-sum result.
\begin{lmm}
For $x_0>u_1+v+\phi_{\chi^{(j)}}$ we have:
\begin{equation*}\label{setup}
w_{j,m}^2\le (1+O(\mathcal{L}^{-1}))\left|J(\rho^{(j,m)},\chi^{(j)})\right|^2
\end{equation*}
\end{lmm}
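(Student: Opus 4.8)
The statement is a lower-bound estimate for a single weight $w_{j,m}$ in terms of the Dirichlet series sum $J(\rho^{(j,m)},\chi^{(j)})$, and the natural route is the standard ``arithmetic identity plus positivity'' trick that underlies Graham's method. First I would compute the inner arithmetic factor: for $n$ in the relevant range the two divisor convolutions $\sum_{d\mid n}\psi_d$ and $\sum_{d\mid n}\theta_d$ behave, via M\"obius cancellation, like detectors of $n$ having no small prime factors. Concretely, since $\psi_d$ agrees with $\mu(d)$ for $d\le U_1$ and is a smoothed M\"obius function up to $V$, and similarly $\theta_d$ is a smoothed M\"obius function up to $W$, one has $\sum_{d\mid n}\psi_d = 1$ when $n=1$ and more generally the product $\bigl(\sum_{d\mid n}\psi_d\bigr)\bigl(\sum_{d\mid n}\theta_d\bigr)$ vanishes unless $n$ has no prime factor below $W$ (roughly), while for $n=1$ both factors equal $1$. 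So the $n=1$ term of $J$ contributes exactly $w_{j,m}\,j(1)$, and $j(1)$ is, up to $1+O(\mathcal L^{-1})$, controlled by the definition of $j(n)$ evaluated at $n=1$.

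The key point is then to show that the $n=1$ term \emph{dominates} in the sense that $w_{j,m}\asymp |J(\rho^{(j,m)},\chi^{(j)})|$ up to the stated factor, or rather that $w_{j,m}$ is bounded by (essentially) the full sum. This is where the hypothesis $x_0 > u_1 + v + \phi_{\chi^{(j)}}$ enters: it guarantees that the ``length'' of the Dirichlet series (governed by $X_0 = q^{x_0}$) is large enough relative to the level of the sieve weights ($V = q^v$, $U_1 = q^{u_1}$) and the ``conductor exponent'' $\phi_{\chi^{(j)}}$ that all the terms $n\ge 2$ can be absorbed. The mechanism is that $j(n) = (e^{-n/X} - e^{n\mathcal L^2/U})/\ldots$ decays for $n$ much larger than $X$ and is negligible (indeed the second exponential makes it vanish or flips sign in a controlled way) for $n$ below $U/\mathcal L^2$; combined with the support restrictions from $\psi_d,\theta_d$, the surviving $n$ are confined to a range where one can bound the tail. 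I would make this precise by writing $J = w_{j,m} j(1) + (\text{tail})$ and estimating the tail trivially using $|n^{-\rho}| = n^{-\beta}$ with $\beta = 1 - \lambda^{(j,m)}/\mathcal L$ close to $1$, together with the divisor-bound estimates $\sum_{d\le V}|\psi_d| \ll V$, $\sum_{d\le W}|\theta_d|\ll W$, so that the tail is at most a quantity of size $\ll q^{-c}$ for some $c>0$ coming from the slack in $x_0 - u_1 - v - \phi_{\chi^{(j)}} > 0$. Squaring and rearranging $w_{j,m} j(1) = J - (\text{tail})$ then gives $w_{j,m}^2 (j(1))^2 \le |J|^2 + \ldots$, and since $j(1) = 1 + O(\mathcal L^{-1})$ one obtains the claimed inequality.

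The main obstacle I anticipate is the bookkeeping around $j(n)$: the function $j(n)$ is an average over the parameters $u\in[u_0,u_1]$, $x\in[x_0,x_1]$ of $e^{-n/X} - e^{n\mathcal L^2/U}$, and one must check both that $j(1)$ is genuinely $1+O(\mathcal L^{-1})$ (i.e.\ the subtracted exponential $e^{\mathcal L^2/U}$ is $1 + O(\mathcal L^2/U) = 1 + O(\mathcal L^2 q^{-u_0})$, negligible) and that for the surviving $n\ge 2$ the contribution is small. This requires the condition $u_0 > 0$ (so $U = q^{u_0}\to\infty$ faster than any power of $\mathcal L$) and, crucially, the interplay with the M\"obius supports: one must verify that the only $n$ with $\bigl(\sum_{d\mid n}\psi_d\bigr)\bigl(\sum_{d\mid n}\theta_d\bigr)\ne 0$ and $j(n)$ not negligible are either $n=1$ or $n$ so large that $n^{-\beta}$ and the trivial divisor bounds kill everything thanks to $x_0$ being large. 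I would handle this by following the structure of \cite{HB}[Section 11] verbatim where possible, only replacing the exponential weight used there by the integrated weight $j(n)$ (the Jutila device), and checking that the integration over $u, x$ does not disturb any of the estimates beyond the harmless $1+O(\mathcal L^{-1})$ factor. Once the tail is shown to be $O(\mathcal L^{-1})$ times the main term, the inequality follows by elementary manipulation.
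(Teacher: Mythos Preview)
Your proposal has a genuine gap: the mechanism you describe is not what produces the identity. You claim the $n=1$ term dominates with $j(1)=1+O(\mathcal L^{-1})$, but in fact
\[
j(1)=\frac{1}{(u_1-u_0)(x_1-x_0)}\int_{x_0}^{x_1}\int_{u_0}^{u_1}\bigl(e^{-1/X}-e^{-\mathcal L^2/U}\bigr)\,du\,dx
= (1-O(q^{-x_0}))-(1-O(\mathcal L^2 q^{-u_0}))=O(\mathcal L^2 q^{-u_0}),
\]
which is $o(1)$, not $1+O(\mathcal L^{-1})$. Your own parenthetical computation of $e^{-\mathcal L^2/U}=1+O(\mathcal L^2/U)$ is correct, but that makes the \emph{difference} $e^{-1/X}-e^{-\mathcal L^2/U}$ negligible, not the subtracted term. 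Moreover, your ``trivial tail bound'' cannot work: for $U_1<n\lesssim X$ the weight $e^{-n/X}-e^{-n\mathcal L^2/U}$ is essentially $1$, and $\sum_{U_1<n<X}|a_n|n^{-\beta}$ with $|a_n|\le d(n)^2$ and $\beta\approx 1$ is of size $\asymp\mathcal L^4$, not $q^{-c}$.

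The paper's proof (citing Heath-Brown, pp.~317--318) uses a completely different mechanism that you never mention: the fact that $\rho^{(j,m)}$ is a \emph{zero} of $L(s,\chi^{(j)})$. One treats the two exponential pieces separately. The sum with weight $e^{-n\mathcal L^2/U}$ is handled directly: since $\psi_d=\mu(d)$ for $d\le U_1$ and the weight effectively truncates to $n\lesssim U\mathcal L^{-2}<U_1$, only $n=1$ survives and this piece equals $1+o(1)$. The sum with weight $e^{-n/X}$ is handled by Mellin inversion: the generating series factors as $L(s,\chi)\cdot(\text{Dirichlet polynomial of length }\le VW)$, and shifting the contour past $s=0$ picks up residue $L(\rho,\chi)\cdot(\cdots)=0$ precisely because $\rho$ is a zero. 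The hypothesis $x_0>u_1+v+\phi_{\chi^{(j)}}$ is exactly what makes the remaining integral on the shifted line negligible (via the convexity bound $L(s,\chi)\ll q^{\phi_\chi/k+\epsilon}$ near $\Re s=1-1/k$). Thus the full sum equals $\pm 1+O(\mathcal L^{-1})$; multiplying by $w_{j,m}$, integrating over $u,x$, and squaring gives the lemma. The role of the hypothesis is analytic (size of a contour integral), not combinatorial support as you suggest.
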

\begin{proof}
The argument of \cite{HB}[Pages 317-318] shows that
\begin{equation}1=(1+O(\mathcal{L}^{-1}))\sum_{n=1}^{\infty}\left(\sum_{d|n}\psi_d\right)\left(\sum_{d|n}\theta_d\right)\chi^{(j)}(n)n^{-\rho^{(j,m)}}\left(e^{-n/X}-e^{-n\mathcal{L}^2/U}\right).\end{equation}
We note that in \cite{HB} the definition of $\psi_d$ is slightly different (it defined with constants labelled $U$ and $V$ rather than $U_1$ and $V$ as in our case), but this does not affect the argument in any way since $U_1\ge U$.

Multiplying the above expression by weights $w_{j,m}$ and integrating over $x\in[x_0,x_1]$ and $u\in[u_0,u_1]$ gives
\begin{equation}w_{j,m}=(1+O(\mathcal{L}^{-1}))J(\rho^{(j,m)},\chi^{(j)}).\end{equation}
Squaring both sides of the above expression gives the result.
\end{proof}
We sum the expression \eqref{setup} over all zeros $\rho^{(j,m)}$. We let $\smash{\displaystyle\sum_{j,m}}$ denote this sum.

Thus
\begin{equation}\sum_{j,m}w_{j,m}^2\le (1+O(\mathcal{L}^{-1}))\sum_{j,m}\left|J(\rho^{(j,m)},\chi^{(j)})\right|^2.\end{equation}
We now use the well-known duality principle, which we will state here for convenience.
\begin{lmm}[Duality Principle]\label{lmm:DualityPrinciple}
If
\begin{equation*}\sum_{n}\left|\sum_{j,m}a_{n,j,m}C_{j,m}\right|^2\le B\sum_{j,m}\left|C_{j,m}\right|^2\end{equation*}
for all choices of the coefficients $C_{j,m}$, then
\begin{equation*}\sum_{j,m}\left|\sum_{n}a_{n,j,m}b_{n}\right|^2\le B\sum_{n}\left| b_n\right|^2\end{equation*}
for any choice of $b_n$.
\end{lmm}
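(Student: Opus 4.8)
The plan is to obtain this from the self-duality of the operator norm: the hypothesis says that the linear map with matrix $(a_{n,j,m})$, sending a sequence indexed by $(j,m)$ to one indexed by $n$, has norm at most $B^{1/2}$, and the conclusion is precisely the corresponding bound for the transposed map. Rather than invoke functional analysis I would argue directly. Write $d_{j,m} = \sum_n a_{n,j,m} b_n$ and put $S = \sum_{j,m}|d_{j,m}|^2$; this is the quantity to be bounded, and the case $S=0$ is trivial, so assume $S>0$.

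The key step is to apply the hypothesis to the particular test coefficients $C_{j,m} = \overline{d_{j,m}}$. With this choice,
\[ S = \sum_{j,m} d_{j,m}\overline{d_{j,m}} = \sum_{j,m} C_{j,m}\sum_n a_{n,j,m} b_n = \sum_n b_n\left(\sum_{j,m} a_{n,j,m} C_{j,m}\right), \]
after interchanging the order of summation. By the Cauchy--Schwarz inequality in the variable $n$ and then the hypothesis,
\[ S \le \left(\sum_n |b_n|^2\right)^{1/2}\left(\sum_n\left|\sum_{j,m} a_{n,j,m} C_{j,m}\right|^2\right)^{1/2} \le \left(\sum_n |b_n|^2\right)^{1/2}\left(B\sum_{j,m}|C_{j,m}|^2\right)^{1/2}. \]
Since $\sum_{j,m}|C_{j,m}|^2 = \sum_{j,m}|d_{j,m}|^2 = S$, this reads $S \le (BS)^{1/2}\bigl(\sum_n|b_n|^2\bigr)^{1/2}$; dividing by $S^{1/2}$ and squaring gives $S \le B\sum_n|b_n|^2$, which is the assertion.

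The argument is entirely formal, so I do not expect a genuine obstacle; the only point that nominally needs justification is the interchange of the sums over $n$ and over $(j,m)$. In our application this is unproblematic, since by the log-free zero-density estimate (Proposition \ref{Prpstn:ZeroDensity}) there are only finitely many zeros of $\prod_\chi L(s,\chi)$ in the bounded rectangle $\mathcal{R}$, so the sum over $(j,m)$ is finite; the sum over $n$ may then be truncated with negligible error, or one simply assumes (as one may) that the coefficient sequences are finitely supported. Thus the lemma reduces to a single application of Cauchy--Schwarz once the right test vector has been selected.
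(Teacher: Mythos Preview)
Your proof is correct and is the standard argument for the duality principle. The paper itself does not supply a proof of this lemma at all; it is stated ``for convenience'' as a well-known result and then immediately applied, so there is nothing to compare against.
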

We wish to use Lemma \ref{lmm:DualityPrinciple} with
\begin{align}
a_{n,j,m}&=w_{j,m}\chi^{(j)}(n)n^{1/2-\rho^{(j,m)}}\left(\sum_{d|n}\theta_d\right)j(n)^{1/2},\\
b_n&=\left(\sum_{d|n}\psi_d\right)n^{-1/2}j(n)^{1/2},
\end{align}
to bound this sum. We note that
\begin{equation}\sum_{n=1}^{\infty}a_{n,j,m}b_n=J(\rho^{(j,m)},\chi^{(j)}).\end{equation}
First we evaluate $\sum b_n^2$.
\begin{lmm}\label{lmm:bnSum}
For $x_0>v$ we have:
\begin{equation*}\sum_{n=1}^\infty |b_n|^2=(1+O(\mathcal{L}^{-1}\log{\mathcal{L}}))\frac{x_1+x_0-u_1-v}{2(v-u_1)}.
\end{equation*}
\end{lmm}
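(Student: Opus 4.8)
The plan is to follow the computation in \cite{HB}[Section 11] essentially line for line, the only new feature being that the exponential weight is now averaged over the parameters $u$ and $x$ (the integrated exponential weights introduced by Jutila \cite{Jutila}). Write $g(n):=\sum_{d\mid n}\psi_d$, so that $g$ is the Barban--Vehov weight attached to the levels $U_1<V$. Expanding the square in $b_n=\bigl(\sum_{d\mid n}\psi_d\bigr)n^{-1/2}j(n)^{1/2}$ and interchanging the sum over $n$ with the double integral defining $j(n)$ gives $\sum_{n=1}^\infty|b_n|^2=\bigl\langle S(X,U)\bigr\rangle$, where $\langle\,\cdot\,\rangle$ denotes the average over $u\in[u_0,u_1]$ and $x\in[x_0,x_1]$, where $X=q^x$, $U=q^u$, and
\[
S(X,U):=\sum_{n=1}^\infty\frac{g(n)^2}{n}\bigl(e^{-n/X}-e^{-n\mathcal{L}^2/U}\bigr).
\]
So it is enough to evaluate $S(X,U)$ for $u_0\le u\le u_1<v<x_0\le x\le x_1$.

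First I would isolate the elementary structure of $g$. Since $\psi_d=\mu(d)$ for $d\le U_1$, we have $g(n)=\sum_{d\mid n}\mu(d)=[n=1]$ whenever $\mathrm{rad}(n)\le U_1$; in particular $g(n)=0$ for all $n$ with $2\le n\le U_1$. Hence in $S(X,U)$ the term $n=1$ contributes $e^{-1/X}-e^{-\mathcal{L}^2/U}=O(q^{-c})$ for some absolute $c>0$, the range $2\le n\le U_1$ contributes nothing, and, since $n\mathcal{L}^2/U\ge q^{u_1-u}\mathcal{L}^2\ge\mathcal{L}^2$ for $n>U_1$, one has $\sum_{n>U_1}g(n)^2n^{-1}e^{-n\mathcal{L}^2/U}=O(\mathcal{L}^{-3})$. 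Consequently
\[
S(X,U)=T(X)+O(\mathcal{L}^{-3}),\qquad T(X):=\sum_{n>U_1}\frac{g(n)^2}{n}\,e^{-n/X},
\]
and $T(X)$ does not depend on $u$, so the $u$-integration in $\langle\,\cdot\,\rangle$ is trivial.

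The core estimate is
\[
T(X)=\tfrac12+\frac{x-v}{v-u_1}+O(\mathcal{L}^{-1}\log\mathcal{L}),
\]
which I would prove by splitting the range at $n=V$ and invoking the variance estimates for Barban--Vehov weights due to Graham \cite{Graham1}, \cite{GrahamThesis}. For $n\ge V$ one has $\sum_{n\le N}g(n)^2=\bigl(1+O(\mathcal{L}^{-1}\log\mathcal{L})\bigr)N/\log(V/U_1)$ (the relative error being $O(\log\log V/\log V)$ and $\log V\asymp\mathcal{L}$); partial summation against $e^{-t/X}/t$, using the hypothesis $x_0>v$ so that the flat range $V<n<X$ is non-empty, then shows that $n\ge V$ contributes $\log(X/V)/\log(V/U_1)+O(\mathcal{L}^{-1}\log\mathcal{L})=(x-v)/(v-u_1)+O(\mathcal{L}^{-1}\log\mathcal{L})$. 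On the ``ramp'' $U_1\le n\le V$ one uses the truncated companion $\sum_{n\le N}g(n)^2\sim N\log(N/U_1)/(\log(V/U_1))^2$, i.e.\ the fact that the Barban--Vehov density climbs linearly (in $\log N$) from $0$ at $U_1$ to $1/\log(V/U_1)$ at $V$; since $e^{-n/X}=1+O(q^{v-x_0})$ throughout this range, a further partial summation shows that $U_1\le n\le V$ contributes exactly $\tfrac12+O(\mathcal{L}^{-1})$. Averaging $S(X,U)=T(X)+O(\mathcal{L}^{-3})$ over $u$ (trivial) and over $x\in[x_0,x_1]$ (which replaces $x$ by $\tfrac12(x_0+x_1)$) finally gives
\[
\sum_{n=1}^\infty|b_n|^2=\frac12+\frac{\tfrac12(x_0+x_1)-v}{v-u_1}+O(\mathcal{L}^{-1}\log\mathcal{L})=\bigl(1+O(\mathcal{L}^{-1}\log\mathcal{L})\bigr)\frac{x_1+x_0-u_1-v}{2(v-u_1)},
\]
the last step being legitimate because the main term lies between two positive absolute constants.

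The one real obstacle is the truncated Barban--Vehov estimate on the ramp $U_1\le n\le V$: it is exactly the $\tfrac12$ coming from this range that converts the naive answer (which would feature $u_0$, the exponent of the lower cut-off $U/\mathcal{L}^2\le U_1$) into the stated one (which features $v$), and this range lies a little outside the usual range of validity of Graham's theorem, so some care — or a direct appeal to \cite{GrahamThesis} — is needed there. Everything else — the divisor-sum manipulations behind Graham's estimate (with the elementary identity $\sum_{m\ge1}m^{-1}(e^{-am}-e^{-bm})=\log\frac{1-e^{-b}}{1-e^{-a}}$ that appears when one expands $T(X)$ via its divisor sum), the Mellin/partial-summation bookkeeping, and the harmless tail $n>X$ contributing $O(\mathcal{L}^{-1})$ — is routine and already carried out in \cite{HB}.
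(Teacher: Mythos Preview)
Your proposal is correct and follows essentially the same route as the paper. The paper's own proof is terser: it simply quotes \cite{HB}[equation (11.14)] to obtain
\[
\sum_{n\ge 1}\Bigl(\sum_{d\mid n}\psi_d\Bigr)^2 n^{-1}\bigl(e^{-n/X}-e^{-n\mathcal{L}^2/U}\bigr)=(1+O(\mathcal{L}^{-1}\log\mathcal{L}))\,\frac{2x-u_1-v}{2(v-u_1)}
\]
(noting that $\psi_d$ is now defined with $U_1,V$ in place of Heath--Brown's $U,V$, whence the appearance of $u_1$ rather than $u$), and then averages over $u\in[u_0,u_1]$ and $x\in[x_0,x_1]$ exactly as you do. Your write-up simply unpacks the content of that citation --- the vanishing of $g(n)$ on $2\le n\le U_1$, the negligibility of the $e^{-n\mathcal{L}^2/U}$ tail for $n>U_1$ (which is why the main term is independent of $u$), and Graham's variance estimate on the remaining range --- so the two arguments are the same in substance.
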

\begin{proof}
The argument leading to equation (11.14) of \cite{HB}[Page 319] shows (recalling our definition of $\psi_d$ used parameters $U_1$ and $V$ rather than $U$ and $V$) that provided $x>v$ we have
\begin{equation}\sum_{n=1}^{\infty}\left(\sum_{d|n}\psi_d\right)^2n^{-1}(e^{-n/X}-e^{-n\mathcal{L}^2/U})=(1+O(\mathcal{L}^{-1}\log{\mathcal{L}}))\frac{2x-u_1-v}{2(v-u_1)}.\end{equation}
Since we have $x\ge x_0>v$ this holds in our case.

Therefore, integrating with respect to $x\in[x_0,x_1]$ and $u\in[u_0,u_1]$ and dividing through by $(x_1-x_0)(u_1-u_0)$ gives
\begin{align}\sum_{n=1}^{\infty}\left(\sum_{d|n}\psi_d\right)^2n^{-1}&\frac{\int_{x_0}^{x_1}\int_{u_0}^{u_1}(e^{-n/X}-e^{-n\mathcal{L}^2/U})dudx}{(x_1-x_0)(u_1-u_0)}\nonumber\\
&=(1+O(\mathcal{L}^{-1}\log{\mathcal{L}}))\frac{x_1+x_0-u_1-v}{2(v-u_1)}.\end{align}
Hence the result holds.
\end{proof}
Therefore in order to use Lemma \ref{lmm:DualityPrinciple} we want to find a bound $B$ such that
\begin{equation}\label{duality}
\sum_{n=1}^{\infty}\left|\sum_{j,m}a_{n,j,m}C_{j,m}\right|^2\le B\sum_{j,m}\left|C_{j,m}\right|^2
\end{equation}
for any possible choice of $C_{j,m}$.

Expanding the left hand side, terms are of the form
\begin{align}
\sum_{n=1}^\infty &a_{n,j_1,m_1}a_{n,j_2,m_2}C_{j_1,m_1}\overline{C}_{j_2,m_2}\nonumber\\
&=
C_{j_1,m_1}\overline{C}_{j_2,m_2}w_{j_1,m_1}w_{j_2,m_2}\sum_{n=1}^{\infty}\left(\sum_{d|n}\theta_d\right)^2\chi^{(j_1)}(n)\overline{\chi}^{(j_2)}(n)n^{1-\rho^{(j_1,m_1)}-\overline{\rho}^{(j_2,m_2)}}j(n).
\end{align}
To ease notation we let
\begin{equation}\rho_{(1)}=\rho^{(j_1,m_1)},\qquad \rho_{(2)}=\rho^{(j_2,m_2)},\end{equation}
and correspondingly define $\chi_{(1)}, \chi_{(2)},\beta_{(1)}, \beta_{(2)},\lambda_{(1)},\lambda_{(2)},\gamma_{(1)},\gamma_{(2)}$.

We first deal with the terms when $\chi_{(1)}\ne \chi_{(2)}$.

We put
\begin{equation}J_2(s,\chi)=\sum_{w_1,w_2\le W}\theta_{w_1}\theta_{w_2}\chi([w_1,w_2])[w_1,w_2]^{-s}.\end{equation}
(Here $[a,b]$ denotes the least common multiple of $a$ and $b$).

By the inverse Laplace transform of the exponential function we have
\begin{align}
\sum_{n=1}^{\infty}&\left(\sum_{d|n}\theta_d\right)^2\chi_{(1)}(n)\overline{\chi}_{(2)}(n)n^{1-\rho_{(1)}-\overline{\rho}_{(2)}}(e^{-n/X}-e^{-n\mathcal{L}^2/U})\nonumber\\
&=\frac{1}{2\pi i}\int^{1+i\infty}_{1-i\infty}L(s+\rho_{(1)}+\overline{\rho}_{(2)}-1,\chi_{(1)}\overline{\chi}_{(2)})(X^s-(U\mathcal{L}^{-2})^s)\nonumber\\
&\qquad\qquad\qquad\qquad\qquad\times \Gamma(s)J_2(s+\rho_{(1)}+\overline{\rho}_{(2)}-1,\chi_{(1)}\overline{\chi}_{(2)})d s\nonumber\\
&=\frac{1}{2\pi i}\int^{2-\beta_{(1)}-\beta_{(2)}-1/k+i\infty}_{2-\beta_{(1)}-\beta_{(2)}-1/k-i\infty}L(s+\rho_{(1)}+\overline{\rho}_{(2)}-1,\chi_{(1)}\overline{\chi}_{(2)})(X^s-(U\mathcal{L}^{-2})^s)\nonumber\\
&\qquad\qquad\qquad\qquad\qquad\times \Gamma(s)J_2(s+\rho_{(1)}+\overline{\rho}_{(2)}-1,\chi_{(1)}\overline{\chi}_{(2)})ds.
\end{align}
where $k>10$ is a fixed constant (to be declared later).

On $\Re(s)=2-\beta_{(1)}-\beta_{(2)}-1/k$ with $\chi\ne\chi_0$ we have
\begin{align}L(s+\rho_{(1)}+\overline{\rho}_{(2)}-1,\chi)&\ll_k q^{\phi_\chi/k+1/k^2}(1+|t|),\\
\Gamma(s)&\ll e^{-|t|},\\
J_2(s+\rho_{(1)}+\overline{\rho}_{(2)}-1,\chi)&\ll \sum [w_1,w_2]^{-1+1/k}\nonumber\\
&\ll \sum_{n\le W^2}n^{-1+1/k}d(n)^2\nonumber\\
&\ll W^{2/k}\mathcal{L}^3.\end{align}
Thus, letting $\chi=\chi_{(1)}\overline{\chi}_{(2)}$, we obtain
\begin{align}
\frac{1}{2\pi i}\int_{1-\beta_{(1)}-\beta_{(2)}-1/k-i\infty}^{1-\beta_{(1)}-\beta_{(2)}-1/k+i\infty}&L(s+\rho_{(1)}+\overline{\rho}_{(2)}-1,\chi)\Gamma(s)(X^s-(U\mathcal{L}^{-2})^s)\nonumber\\
&\times J_2(s+\rho_{(1)}+\overline{\rho}_{(2)}-1,\chi)d s\nonumber\\
&\qquad\qquad\ll (q^{\phi_\chi} W^2U^{-1}\mathcal{L}^3)^{1/k}q^{1/k^2}\mathcal{L}^2(U\mathcal{L}^{-2})^{2-\beta_{(1)}-\beta_{(2)}}\nonumber\\
&\qquad\qquad\ll (q^{\phi_\chi} W^2U^{-1})^{1/k}q^{2/k^2}.
\end{align}
(Recalling $1-\beta_{(1)}$ and $1-\beta_{(2)}$ are $o(1)$)

This is $O(\mathcal{L}^{-1})$ provided that $k$ is chosen sufficiently large and (recalling $\phi_\chi\le 1/3$ for all $\chi$) provided we have that
\begin{equation}\label{cond2}
u_0>2w+1/3.
\end{equation}
The terms with $\chi_{(1)}\ne \chi_{(2)}$ therefore contribute
\begin{equation}\label{nondiagonal}
\ll \mathcal{L}^{-1}\left(\sum_{j,m}|C_{j,m}|w_{j,m}\right)^2\ll \mathcal{L}^{-1}\left(\sum_{j,m}w_{j,m}^2\right)\sum_{j,m}|C_{j,m}|^2.
\end{equation}
We now consider terms with $\chi_{(1)}=\chi_{(2)}$. Such terms are of the form
\begin{equation}\label{terms}
C_{(1)}\overline{C}_{(2)}(w_{(1)}w_{(2)})\sum_{n=1}^{\infty}\left(\sum_{d|n}\theta_d\right)^2\chi_0(n)n^{1-\rho_{(1)}-\overline{\rho}_{(2)}}j(n).
\end{equation}
\begin{lmm}
For $x>v$ we have:
\begin{align*}
&\left|\sum_{n=1}^{\infty}\left(\sum_{d|n}\theta_d\right)^2\chi_0(n)n^{1-\rho_{(1)}-\overline{\rho}_{(2)}}j(n)\right|\le\left|\frac{(1+O(\mathcal{L}^{-1}\log{\mathcal{L}}))}{w\mathcal{L}^2(2-\rho_{(1)}-\overline{\rho}_{(2)})^2}\right|\\
&\qquad\qquad\times\left|\frac{X_1^{2-\rho_{(1)}-\overline{\rho}_{(2)}}-X_0^{2-\rho_{(1)}-\overline{\rho_{(2)}}}}{x_1-x_0}-\frac{U_1^{2-\rho_{(1)}-\overline{\rho}_{(2)}}-U_0^{2-\rho_{(1)}-\overline{\rho_{(2)}}}}{u_1-u_0}\right|+O(\mathcal{L}^{-1}).
\end{align*}
\end{lmm}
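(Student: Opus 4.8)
The plan is to reduce the sum, by Mellin inversion, to a single residue computation whose main term is an explicit Selberg-type diagonal sum in the $\theta_d$. Set $s=\rho_{(1)}+\overline{\rho}_{(2)}-1$, so that $n^{1-\rho_{(1)}-\overline{\rho}_{(2)}}=n^{-s}$ and $2-\rho_{(1)}-\overline{\rho}_{(2)}=1-s$; since $\Re(1-s)=2-\beta_{(1)}-\beta_{(2)}=(\lambda_{(1)}+\lambda_{(2)})/\mathcal{L}$ and $\Im(1-s)=-(\gamma_{(1)}-\gamma_{(2)})$, while $\lambda_{(i)}\geq\eta$ and $\lambda_{(i)}\leq R$ in the present setting, we have $|1-s|\asymp\mathcal{L}^{-1}$. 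First I would treat the un-averaged sum, in which $j(n)$ is replaced by $e^{-n/X}-e^{-n\mathcal{L}^{2}/U}$ for single parameters $X=q^{x}$ and $U=q^{u}$ with $x\in[x_0,x_1]$ and $u\in[u_0,u_1]$. By the Mellin identity $e^{-t}=\frac{1}{2\pi i}\int_{(c)}\Gamma(w)t^{-w}\,dw$ (valid for $t>0$ and $c>0$) and the complete multiplicativity of $\chi_0$, this sum equals
\[\frac{1}{2\pi i}\int_{(c)}\Gamma(w)\big(X^{w}-(U\mathcal{L}^{-2})^{w}\big)L(s+w,\chi_0)\,T(s+w)\,dw,\]
where $c>\Re(1-s)$ is a small positive constant, $L(z,\chi_0)=\zeta(z)\prod_{p|q}(1-p^{-z})$ is analytic except for a simple pole at $z=1$ with residue $\phi(q)/q$, and
\[T(z)=\sum_{d_1,d_2\leq W}\theta_{d_1}\theta_{d_2}\chi_0([d_1,d_2])[d_1,d_2]^{-z}\]
is entire.

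The heart of the matter, and the step I expect to be the main obstacle, is the asymptotic evaluation of $T(1)$. Using the diagonalisation $[d_1,d_2]^{-1}=(d_1d_2)^{-1}\sum_{e|(d_1,d_2)}\phi(e)$ one rewrites $T(1)$ as $\sum_{(e,q)=1}\phi(e)\big(\sum_{e|d,\,(d,q)=1}\theta_d/d\big)^{2}$; evaluating the inner sums by the elementary estimate for $\sum_{f\leq W,\,(f,m)=1}\mu(f)\log(W/f)/f$ (with main term $m/\phi(m)$) and the outer sum by $\sum_{e\leq W,\,(e,q)=1}\mu^{2}(e)/\phi(e)$ (with main term $(\phi(q)/q)\log W$), one should obtain
\[T(1)=\frac{q}{\phi(q)\,w\mathcal{L}}\big(1+O(\mathcal{L}^{-1}\log\mathcal{L})\big).\]
This is in essence the computation of Heath-Brown in \cite{HB}[Section 11]; the delicate points are that the leading constant must come out to be exactly $1$ (after the factor $q/\phi(q)$) and that the error must be held to $O(\mathcal{L}^{-1}\log\mathcal{L})$.

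Next I would shift the $w$-contour to $\Re(w)=-\epsilon'$ for a small fixed $\epsilon'>0$, crossing the simple pole of $L(s+w,\chi_0)$ at $w=1-s$ and the simple pole of $\Gamma$ at $w=0$; the residue at $w=0$ vanishes because $X^{0}=(U\mathcal{L}^{-2})^{0}$. On the line $\Re(w)=-\epsilon'$ I would bound the integrand using $\Gamma(w)\ll e^{-|\Im w|}$, a convexity bound for $L(s+w,\chi_0)$ (noting $\prod_{p|q}|1-p^{-z}|\ll q^{o(1)}$), the trivial bound $T(z)\ll W^{2(1-\Re z)}\mathcal{L}^{3}$, and $|X^{w}|+|(U\mathcal{L}^{-2})^{w}|\ll X^{-\epsilon'}+U^{-\epsilon'}\mathcal{L}^{2\epsilon'}$; since the hypothesis $x>v$ and the constant constraints $v>u_1>u_0>2w$ give $x>2w$ and $u\geq u_0>2w$, both $(W^{2}/X)^{\epsilon'}$ and $(W^{2}/U)^{\epsilon'}$ are $\ll q^{-c}$ for some fixed $c>0$, so the shifted integral contributes $O(\mathcal{L}^{-1})$. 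Retaining the residue at $w=1-s$, namely $\Gamma(1-s)\big(X^{1-s}-(U\mathcal{L}^{-2})^{1-s}\big)(\phi(q)/q)\,T(1)$, substituting the value of $T(1)$, and using $\Gamma(1-s)=(1-s)^{-1}(1+O(\mathcal{L}^{-1}))$ together with $(U\mathcal{L}^{-2})^{1-s}=U^{1-s}(1+O(\mathcal{L}^{-1}\log\mathcal{L}))$ and the fact that $|X^{1-s}-U^{1-s}|\gg 1$ (because $\lambda_{(i)}\geq\eta$ and $x\geq x_0>u_1\geq u$), the un-averaged sum equals
\[\frac{X^{1-s}-U^{1-s}}{(1-s)\,w\mathcal{L}}\big(1+O(\mathcal{L}^{-1}\log\mathcal{L})\big)+O(\mathcal{L}^{-1}).\]

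Finally I would integrate this over $x\in[x_0,x_1]$ and $u\in[u_0,u_1]$ and divide by $(x_1-x_0)(u_1-u_0)$, which by the definition of $j(n)$ recovers the original sum. Since $\int_{x_0}^{x_1}q^{x(1-s)}\,dx=(X_1^{1-s}-X_0^{1-s})/((1-s)\mathcal{L})$ and likewise $\int_{u_0}^{u_1}q^{u(1-s)}\,du=(U_1^{1-s}-U_0^{1-s})/((1-s)\mathcal{L})$, the main term assembles into
\[\frac{1}{w\mathcal{L}^{2}(1-s)^{2}}\left(\frac{X_1^{1-s}-X_0^{1-s}}{x_1-x_0}-\frac{U_1^{1-s}-U_0^{1-s}}{u_1-u_0}\right)\big(1+O(\mathcal{L}^{-1}\log\mathcal{L})\big);\]
recalling $1-s=2-\rho_{(1)}-\overline{\rho}_{(2)}$ and taking absolute values gives the claimed inequality, the additive $O(\mathcal{L}^{-1})$ being the contribution of the shifted integral.
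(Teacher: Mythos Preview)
Your overall strategy is the same as the paper's: Mellin inversion, a contour shift past the pole of $L(\cdot,\chi_0)$, evaluation of the residue, and integration in $x$ and $u$. One genuine difference is your treatment of $T(1)$: the paper rewrites $T(1)$ as $(q/\phi(q))N^{-1}\sum_{n\le N,(n,q)=1}(\sum_{d|n}\theta_d)^2$, drops the coprimality condition, and cites Graham's asymptotic for the Selberg sieve sum, obtaining only an \emph{upper} bound $T(1)\le (1+O(\mathcal{L}^{-1}))q/(\phi(q)w\mathcal{L})$; your diagonalisation via $\sum_{e\mid(d_1,d_2)}\phi(e)$ would in principle give an asymptotic equality. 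Either suffices for the lemma, which is an inequality.

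There is, however, a real gap. Your claim that $|1-s|\asymp\mathcal{L}^{-1}$ is false in the present setting: only $\Re(1-s)=(\lambda_{(1)}+\lambda_{(2)})/\mathcal{L}$ is of size $\mathcal{L}^{-1}$, whereas $\Im(1-s)=\gamma_{(2)}-\gamma_{(1)}$ ranges over $[-2R,2R]$ since $\rho_{(1)}$ and $\rho_{(2)}$ are zeros of the same $L(s,\chi)$ lying in \emph{different} rectangles $\mathcal{R}_{m_1},\mathcal{R}_{m_2}$ with $|m_i|\le R\mathcal{L}$. Consequently your estimate $\Gamma(1-s)=(1-s)^{-1}(1+O(\mathcal{L}^{-1}))$, which relies on the Taylor expansion of $\Gamma$ at $0$, is unjustified when $|\Im(1-s)|$ is a fixed nonzero constant. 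The paper handles this correctly by using the Weierstrass product to show $|\Gamma(1-s)|\le(1+O(\mathcal{L}^{-1}\log\mathcal{L}))|1-s|^{-1}$, an argument that only needs $\Re(1-s)=O(\mathcal{L}^{-1}\log\mathcal{L})$ and works uniformly in the bounded imaginary part. The same oversight surfaces in your step $(U\mathcal{L}^{-2})^{1-s}=U^{1-s}(1+O(\mathcal{L}^{-1}\log\mathcal{L}))$: the modulus $|\mathcal{L}^{-2(1-s)}|=\mathcal{L}^{-2\Re(1-s)}$ is indeed $1+O(\mathcal{L}^{-1}\log\mathcal{L})$, but the phase $e^{-2i\Im(1-s)\log\mathcal{L}}$ is not close to $1$, so this is not a valid complex equality. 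Once you replace the $\Gamma$ step by the Weierstrass argument and carry the factor $\mathcal{L}^{-2(1-s)}$ through the $u$-integration (taking absolute values only at the end, as the paper does), your proof goes through.
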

\begin{proof}
We have that
\begin{align}
\sum_{n=1}^\infty \left(\sum_{d|n}\theta_d\right)^2&\chi_0(n)n^{1-\rho_{(1)}-\overline{\rho}_{(2)}}\left(e^{-k[d_1,d_2]/X}-e^{k[d_1,d_2]\mathcal{L}^2/U}\right)\nonumber\\
&=\sum_{d_1,d_2}\theta_{d_1}\theta_{d_2}\chi_0([d_1,d_2])[d_1,d_2]^{1-\rho_{(1)}-\overline{\rho}_{(2)}}\nonumber\\
&\qquad\qquad\times\sum_{k=1}^\infty k^{1-\rho_{(1)}-\overline{\rho}_2}\chi_0(k)\left(e^{-k[d_1,d_2]/X}-e^{k[d_1,d_2]\mathcal{L}^2/U}\right).
\end{align}
By the inverse Laplace transform of the exponential function again we have
\begin{align}
&\sum_{k=1}^{\infty}\chi_0(k)k^{1-\rho_{(1)}-\overline{\rho}_{(2)}}(e^{-k[d_1,d_2]/X}-e^{-k[d_1,d_2]\mathcal{L}^2/U})\nonumber\\
&=\frac{1}{2\pi i}\int^{1+i\infty}_{1-i\infty}L(s+\rho_{(1)}+\overline{\rho}_{(2)}-1,\chi_0)\Gamma(s)\left(\left(\frac{X}{[d_1,d_2]}\right)^s-\left(\frac{U}{\mathcal{L}^2[d_1,d_2]}\right)^s\right)ds.
\end{align}
We again move the line of integration to $\Re(s)=2-\beta_{(1)}-\beta_{(2)}-1/k$, and by exactly the same reasoning, we have that the integral over this contour is negligible when $u_0>2w$. We encounter a pole at $s=2-\rho_{(1)}-\overline{\rho}_{(2)}$, however, which contributes
\begin{equation}\frac{\phi(q)}{q}\Gamma(2-\rho_{(1)}-\overline{\rho}_{(2)})\left(\left(\frac{X}{[d_1,d_2]}\right)^{2-\rho_{(1)}-\overline{\rho}_{(2)}}-\left(\frac{U}{\mathcal{L}^{2}[d_1,d_2]}\right)^{2-\rho_{(1)}-\overline{\rho}_{(2)}}\right).\end{equation}
Thus
\begin{align}
\sum_{n=1}^\infty& \left(\sum_{d|n}\theta_d\right)^2\chi_0(n)n^{1-\rho_{(1)}-\overline{\rho}_{(2)}}\left(e^{-k[d_1,d_2]/X}-e^{k[d_1,d_2]\mathcal{L}^2/U}\right)\nonumber\\
&=\frac{\phi(q)}{q}\Gamma(2-\rho_{(1)}-\overline{\rho}_{(2)})\left(X^{2-\rho_{(1)}-\overline{\rho}_{(2)}}-(U\mathcal{L}^{-2})^{2-\rho_{(1)}-\overline{\rho}_{(2)}}\right)\sum_{d_1,d_2}\frac{\theta_{d_1}\theta_{d_2}\chi_0([d_1,d_2])}{[d_1,d_2]}\nonumber\\
&\qquad\qquad+O(\mathcal{L}^{-1}).
\end{align}
We now perform the integrations with respect to $x$ and $u$. We have
\begin{align}
&\frac{1}{(x_1-x_0)(u_1-u_0)}\int_{x_0}^{x_1}\int_{u_0}^{u_1}\left(X^{2-\rho_{(1)}-\overline{\rho}_{(2)}}-(U\mathcal{L}^{-2})^{2-\rho_{(1)}-\overline{\rho}_{(2)}}\right)du d x\nonumber\\
&=\frac{1}{\mathcal{L}(2-\rho_{(1)}-\overline{\rho}_{(2)})}\left(\frac{X_1^{2-\rho_{(1)}-\overline{\rho}_{(2)}}-X_0^{2-\rho_{(1)}-\overline{\rho_{(2)}}}}{x_1-x_0}-\frac{U_1^{2-\rho_{(1)}-\overline{\rho}_{(2)}}-U_0^{2-\rho_{(1)}-\overline{\rho_{(2)}}}}{u_1-u_0}\right).
\end{align}
Thus
\begin{align}
&\left|\sum_{n=1}^\infty \left(\sum_{d|n}\theta_d\right)^2\chi_0(n)n^{1-\rho_{(1)}-\overline{\rho}_{(2)}}j(n)\right|\nonumber\\
&\le\frac{\phi(q)}{\mathcal{L}q}\left|\frac{\Gamma(2-\rho_{(1)}-\overline{\rho}_{(2)})}{2-\rho_{(1)}-\overline{\rho}_{(2)}}\right|\times\left|\sum_{d_1,d_2}\frac{\theta_{d_1}\theta_{d_2}\chi_0([d_1,d_2])}{[d_1,d_2]}\right|\nonumber\\
&\qquad\qquad\times\left|\frac{X_1^{2-\rho_{(1)}-\overline{\rho}_{(2)}}-X_0^{2-\rho_{(1)}-\overline{\rho_{(2)}}}}{x_1-x_0}-\frac{U_1^{2-\rho_{(1)}-\overline{\rho}_{(2)}}-U_0^{2-\rho_{(1)}-\overline{\rho_{(2)}}}}{u_1-u_0}\right|+O(\mathcal{L}^{-1}).
\end{align}
We now estimate the sum over $d_1,d_2$. We have
\begin{align}
\left|\sum_{d_1,d_2}\theta_{d_1}\theta_{d_2}[d_1,d_2]^{-1}\chi_0([d_1,d_2])\right|&=\frac{1}{N}\Biggl|\sum_{d_1,d_2\le W}\theta_{d_1}\theta_{d_2}\Biggl(\frac{q}{\phi(q)}\sum_{\substack{[d_1,d_2]|n\\n\le N\\(n,q)=1}}1+O(q)\Biggr)\Biggr|\nonumber\\
&=\frac{q}{\phi(q)N}\sum_{\substack{n\le N\\(n,q)=1}}\left(\sum_{d|n}\theta_d\right)^2+O(qW^2N^{-1})\nonumber\\
&\le \frac{q}{\phi(q)N}\sum_{n\le N}\left(\sum_{d|n}\theta_d\right)^2+O(qW^2N^{-1}).
\end{align}
Graham \cite{Graham2} has shown that for $N>q^2W^2$ we have
\begin{equation}N^{-1}\sum_{n\le N}\left(\sum_{d|n}\theta_d\right)^2=\frac{1+O(\mathcal{L}^{-1})}{\log{W}}.\end{equation}
Hence for $N>q^2W^2$ we have
\begin{align}
\left|\sum_{d_1,d_2}\theta_{d_1}\theta_{d_2}[d_1,d_2]^{-1}\chi_0([d_1,d_2])\right|&\le\frac{q}{\phi(q)N}\sum_{n\le N}\left(\sum_{d|n}\theta_d\right)^2+O\left(q^{-1}\right)\nonumber\\
&=\frac{(1+O(\mathcal{L}^{-1}))q}{\phi(q)\log{W}}\nonumber\\
&=(1+O(\mathcal{L}^{-1}))\frac{q}{\phi(q)w\mathcal{L}}.
\end{align}
Thus
\begin{align}
&\left|\sum_{n=1}^\infty \left(\sum_{d|n}\theta_d\right)^2\chi_0(n)n^{1-\rho_{(1)}-\overline{\rho}_{(2)}}j(n)\right|\le\frac{(1+O(\mathcal{L}^{-1}))}{\mathcal{L}^2w}\left|\frac{\Gamma(2-\rho_{(1)}-\overline{\rho}_{(2)})}{2-\rho_{(1)}-\overline{\rho}_{(2)}}\right|\nonumber\\
&\qquad\qquad\times\left|\frac{X_1^{2-\rho_{(1)}-\overline{\rho}_{(2)}}-X_0^{2-\rho_{(1)}-\overline{\rho_{(2)}}}}{x_1-x_0}-\frac{U_1^{2-\rho_{(1)}-\overline{\rho}_{(2)}}-U_0^{2-\rho_{(1)}-\overline{\rho_{(2)}}}}{u_1-u_0}\right|+O(\mathcal{L}^{-1}).
\end{align}
We recall the Weierstrass product expansion of $\Gamma(s)$
\begin{equation}\Gamma(s)=\frac{e^{-\gamma s}}{s}\prod_{n=1}^{\infty}\left(1+\frac{s}{n}\right)^{-1}e^{s/n}.\end{equation}
We see that when $s=2-\rho_{(1)}-\overline{\rho}_{(2)}$, since $2-\beta_{(1)}-\beta_{(2)}=O(\mathcal{L}^{-1}\log{\mathcal{L}})$, we have
\begin{align}
\left|\Gamma(s)\right|&\le \frac{e^{-\gamma\Re(s)}}{|s|}\prod_{n=1}^\infty\left|1+\frac{s}{n}\right|^{-1}e^{\Re(s)/n}\nonumber\\
&\le \frac{1+O(\mathcal{L}^{-1}\log{\mathcal{L}})}{|s|}\prod_{n=1}^\infty\left(1+\frac{\Re(s)}{n}\right)^{-1}e^{\Re(s)/n}\nonumber\\
&\le \frac{1+O(\mathcal{L}^{-1}\log{\mathcal{L}})}{|2-\rho_{(1)}-\overline{\rho}_{(2)}|}\prod_{n=1}^\infty\left(1+O\left(\frac{\Re(s)}{n^2}\right)\right)\nonumber\\
&\le\frac{1+O(\mathcal{L}^{-1}\log{\mathcal{L}})}{|2-\rho_{(1)}-\overline{\rho}_{(2)}|}.
\end{align}
This completes the proof.
\end{proof}
To simplify notation we put
\begin{equation}j_2(\rho_{(1)},\rho_{(2)})=\frac{1}{\mathcal{L}^2(2-\rho_{(1)}-\overline{\rho}_{(2)})^2}\left(\frac{X_1^{2-\rho_{(1)}-\overline{\rho}_{(2)}}-X_0^{2-\rho_{(1)}-\overline{\rho_{(2)}}}}{x_1-x_0}-\frac{U_1^{2-\rho_{(1)}-\overline{\rho}_{(2)}}-U_0^{2-\rho_{(1)}-\overline{\rho_{(2)}}}}{u_1-u_0}\right).\end{equation}
Thus the sum over all the terms of the form \eqref{terms} with $\chi_{(1)}=\chi_{(2)}$ is
\begin{align}\le \frac{(1+O(\mathcal{L}^{-1}\log{\mathcal{L}}))}{w}\sum_{\substack{\rho_{(1)},\rho_{(2)}\\ \chi_{(1)}=\chi_{(2)}}}\left|C_{(1)}C_{(2)}w_{(1)}w_{(2)}j_2(\rho_{(1)},\rho_{(2)})\right|\nonumber\\
+O(\mathcal{L}^{-1}\sum_{\substack{\rho_{(1)},\rho_{(2)}\\ \chi_{(1)}=\chi_{(2)}}}\left| C_{(1)}C_{(2)}w_{(1)}w_{(2)})\right|.\end{align}
We put
\begin{equation}
G_2=\max_{\rho_{(1)}}\sum_{\substack{\rho_{(2)}\\ \chi_{(2)}=\chi_{(1)}}}\left|w_{(1)}w_{(2)}j_2(\rho_{(1)},\rho_{(2)})\right|.\label{eq:G2def}
\end{equation}
Hence
\begin{equation}\label{diagonal}
\sum_{\substack{\rho_{(1)},\rho_{(2)}\\ \chi_{(1)}=\chi_{(2)}}}\left|C_{(1)}C_{(2)}w_{(1)}w_{(2)}j_2(\rho_{(1)},\rho_{(2)})\right|\le G_2\sum_{\rho_{(1)}}|C_{(1)}|^2.
\end{equation}
Combining \eqref{nondiagonal} and \eqref{diagonal} we have
\begin{equation}\sum_{n=1}^{\infty}\left|\sum_{j,m}a_{n,j,m}C_{j,m}\right|^2\le \left(\frac{G_2}{w}\left(1+O(\mathcal{L}^{-1}\log{\mathcal{L}})\right)+O\left((\mathcal{L}^{-1}\sum_{j,m}w_{j,m}^2)\right)\right)\sum_{j,m}\left|C_{j,m}\right|^2\end{equation}
for any choice of the coefficients $C_{j,m}$.

Therefore by Lemma \ref{lmm:DualityPrinciple} and Lemma \ref{lmm:bnSum} we have
\begin{equation}\sum_{j,m}w_{j,m}^2\le \left(1+O(\mathcal{L}^{-1}\log{\mathcal{L}})\right)\left(\frac{G_2}{w}+O\left(\mathcal{L}^{-1}\sum_{j,m}w_{j,m}^2\right)\right)\left(\frac{x_1+x_0-u_1-v}{2(v-u_1)}\right)\end{equation}
which gives
\begin{equation}
\sum_{j,m}w_{j,m}^2\le \left(1+O(\mathcal{L}^{-1}\log{\mathcal{L}})\right)\left(\frac{x_1+x_0-u_1-v}{2w(v-u_1)}\right)G_2.
\end{equation}
We are therefore left to choose suitable weights $w_{j,m}$, bound $G_2$ and choose suitable constants $w,u_0,u_1,v,x_0,x_1$.

We note that, using Cauchy's inequality, we have
\begin{align}
&\left|\frac{X_1^{2-\rho_{(1)}-\overline{\rho_{(2)}}}-X_0^{2-\rho_{(1)}-\overline{\rho}_{(2)}}}{x_1-x_0}-\frac{U_1^{2-\rho_{(1)}-\overline{\rho_{(2)}}}-U_0^{2-\rho_{(1)}-\overline{\rho}_{(2)}}}{u_1-u_0}\right|\nonumber\\
&\qquad\le\left(\frac{e^{(\lambda_{(1)}+\lambda_{(2)})x_1}+e^{(\lambda_{(1)}+\lambda_{(2)})x_0}}{x_1-x_0}+\frac{e^{(\lambda_{(1)}+\lambda_{(2)})u_1}+e^{(\lambda_{(1)}+\lambda_{(2)})u_0}}{u_1-u_0}\right)\nonumber\\
&\qquad\le\left(\frac{e^{2\lambda_{(1)}x_1}+e^{2\lambda_{(1)}x_0}}{x_1-x_0}+\frac{e^{2\lambda_{(1)}u_1}+e^{2\lambda_{(1)}u_0}}{u_1-u_0}\right)^{1/2}\nonumber\\
&\qquad\qquad\times\left(\frac{e^{2\lambda_{(2)}x_1}+e^{2\lambda_{(2)}x_0}}{x_1-x_0}+\frac{e^{2\lambda_{(2)}u_1}+e^{2\lambda_{(2)}u_0}}{u_1-u_0}\right)^{1/2}.
\end{align}
Also
\begin{align}
\sum_{\rho_{(2)}}\left|\mathcal{L}^{-2}(2-\rho_{(1)}-\rho_{(2)})^{-2}\right|&=\sum_{\rho_{(2)}}\frac{1}{(\lambda_{(1)}+\lambda_{(2)})^2+(v_{(1)}-v_{(2)})^2}\nonumber\\
&\le 2\sum_{m=0}^\infty\frac{1}{(\lambda_{(1)}+\lambda_{(2)})^2+m^2},
\end{align}
since $|\Im(\rho^{(j,m_1)})-\Im(\rho^{(j,m_2)})|\ge (|m_1-m_2|-1)/\mathcal{L}$ by our choice of the rectangles $\mathcal{R}_m$.

Motivated by these observations we choose
\begin{equation}w_{j,m}=\left(\frac{e^{2\lambda^{(j,m)}x_1}+e^{2\lambda^{(j,m)}x_0}}{x_1-x_0}+\frac{e^{2\lambda^{(j,m)}u_1}+e^{2\lambda^{(j,m)}u_0}}{u_1-u_0}\right)^{-1/2}.\end{equation}
We assume from here on that we are only considering zeros $\rho^{(j,m)}$ with $\lambda^{(j,m)}\ge \lambda_{min}$.

We now wish to estimate $G_2$, and so bound $\sum_{\rho_{(2)}}|w_{(1)}w_{(2)}j_2(\rho_{(1)},\rho_{(2)})|$. We assume $\rho_{(1)}$ is in a rectangle $\mathcal{R}_{m_1}$ and then consider the contributions $G_{2,c}$ from zeros in rectangles $\mathcal{R}_{m_2}$ where $|m_1-m_2|=c\in\mathbb{Z}$ (since we have picked a fixed zero in each rectangle, there are at most 2 zeros corresponding to each choice of $c$).

We fist consider $c=0$. In this case $\rho_{(2)}=\rho_{(1)}$ (and there is only one zero). This contributes at most
\begin{align} G_{2,0}&\le\sup_{\rho_{(1)}}\left|j(\rho_{(1)},\rho_{(1)})w_{(1)}^2\right|\nonumber\\
&=\sup_{\rho_{(1)}}\left(\frac{X_1^{2-2\beta_{(1)}}-X_0^{2-2\beta_{(1)}}}{x_1-x_0}-\frac{U_1^{2-2\beta_{(1)}}-U_0^{2-2\beta_{(1)}}}{u_1-u_0}\right)\nonumber\\
&\qquad\qquad\times\left(\frac{X_1^{2-2\beta_{(1)}}+X_0^{2-2\beta_{(1)}}}{x_1-x_0}+\frac{U_1^{2-2\beta_{(1)}}+U_0^{2-2\beta_{(1)}}}{u_1-u_0}\right)^{-1}(2\lambda_{(1)})^{-2}\nonumber\\
&=\sup_{\lambda_{(1)}\ge\lambda_{min}}\left(\frac{e^{2x_1\lambda_{(1)}}-e^{2x_0\lambda_{(1)}}}{x_1-x_0}-\frac{e^{2u_1\lambda_{(1)}}-e^{2u_0\lambda_{(1)}}}{u_1-u_0}\right)\nonumber\\
&\qquad\qquad\times\left(\frac{e^{2x_1\lambda_{(1)}}+e^{2x_0\lambda_{(1)}}}{x_1-x_0}+\frac{e^{2u_1\lambda_{(1)}}+e^{2u_0\lambda_{(1)}}}{u_1-u_0}\right)^{-1}(2\lambda_{(1)})^{-2}.
\end{align}
We now deal with zeros with $1\le c\le 6$. This means that $c-1\le |\Im(\rho_{(1)})-\Im(\rho_{(2)})|\le c+1$. Thus these zeros contribute at most
\begin{align}
&2\sup_{\substack{\lambda_{(1)},\lambda_{(2)}\ge\lambda_{min}\\ c-1\le t\le c+1}}\left|\frac{e^{x_1(\lambda_{(1)}+\lambda_{(2)}+it)}-e^{x_0(\lambda_{(1)}+\lambda_{(2)}+it)}}{x_1-x_0}-\frac{e^{u_1(\lambda_{(1)}+\lambda_{(2)}+it)}-e^{u_0(\lambda_{(1)}+\lambda_{(2)}+it)}}{u_1-u_0}\right|\nonumber\\
&\qquad\times \left(\frac{e^{2x_1\lambda_{(1)}}+e^{2x_0\lambda_{(1)}}}{x_1-x_0}+\frac{e^{2u_1\lambda_{(1)}}+e^{2u_0\lambda_{(1)}}}{u_1-u_0}\right)^{-1/2}\left((\lambda_{(1)}+\lambda_{(2)})^2+t^2\right)^{-1}\nonumber\\
&\qquad\times \left(\frac{e^{2x_1\lambda_{(2)}}+e^{2x_0\lambda_{(2)}}}{x_1-x_0}+\frac{e^{2u_1\lambda_{(2)}}+e^{2u_0\lambda_{(2)}}}{u_1-u_0}\right)^{-1/2}.
\end{align}
By Cauchy's inequality
\begin{equation}e^{2\lambda_{(1)}x_1+2\lambda_{(2)}x_0}+e^{2\lambda_{(1)}x_0+2\lambda_{(2)}x_1}\ge 2e^{(\lambda_{(1)}+\lambda_{(2)})(x_1+x_0)},\end{equation}
and so
\begin{equation}(e^{2\lambda_{(1)}x_1}+e^{2\lambda_{(1)}x_0})(e^{2\lambda_{(2)}x_1}+e^{2\lambda_{(2)}x_0})\ge (e^{(\lambda_{(1)}+\lambda_{(2)})x_1}+e^{(\lambda_{(1)}+\lambda_{(2)})x_0})^2.\end{equation}
Similarly
\begin{equation}(e^{2\lambda_{(1)}u_1}+e^{2\lambda_{(1)}u_0})(e^{2\lambda_{(2)}u_1}+e^{2\lambda_{(2)}u_0})\ge (e^{(\lambda_{(1)}+\lambda_{(2)})u_1}+e^{(\lambda_{(1)}+\lambda_{(2)})u_0})^2.\end{equation}
Using Cauchy's inequality again we have
\begin{equation}e^{2\lambda_{(1)}x_i}+e^{2\lambda_{(1)}u_j}\ge2e^{(\lambda_{(1)}+\lambda_{(2)})(x_i+u_j)}\end{equation}
for any $i,j\in \{0,1\}$. Summing over all such $i,j$ gives
\begin{align}
(e^{2\lambda_{(1)}x_1}+e^{2\lambda_{(1)}x_0})&(e^{2\lambda_{(2)}u_1}+e^{2\lambda_{(2)}u_0})+(e^{2\lambda_{(2)}x_1}+e^{2\lambda_{(2)}x_0})(e^{2\lambda_{(1)}u_1}+e^{2\lambda_{(1)}u_0})\nonumber\\
&\ge 2(e^{(\lambda_{(1)}+\lambda_{(2)})x_1}+e^{(\lambda_{(1)}+\lambda_{(2)})x_0})(e^{(\lambda_{(1)}+\lambda_{(2)})u_1}+e^{(\lambda_{(1)}+\lambda_{(2)})u_0}).
\end{align}
Putting these together gives
\begin{align}
\left(\frac{e^{2x_1\lambda_{(2)}}+e^{2x_0\lambda_{(2)}}}{x_1-x_0}+\frac{e^{2u_1\lambda_{(2)}}+e^{2u_0\lambda_{(2)}}}{u_1-u_0}\right)\left(\frac{e^{2x_1\lambda_{(1)}}+e^{2x_0\lambda_{(1)}}}{x_1-x_0}+\frac{e^{2u_1\lambda_{(1)}}+e^{2u_0\lambda_{(1)}}}{u_1-u_0}\right)\nonumber\\
\ge
\left(\frac{e^{(\lambda_{(1)}+\lambda_{(2)})x_1}+e^{(\lambda_{(1)}+\lambda_{(2)})x_0}}{x_1-x_0}+\frac{e^{(\lambda_{(1)}+\lambda_{(2)})u_1}+e^{(\lambda_{(1)}+\lambda_{(2)})u_0}}{u_1-u_0}\right)^2.
\end{align}
Hence
\begin{align}
G_{2,c}&\le 2\sup_{\substack{\lambda\ge\lambda_{min}\\ c-1\le t\le c+1}}\left|\frac{e^{x_1(2\lambda+it)}-e^{x_0(2\lambda+it)}}{x_1-x_0}-\frac{e^{u_1(2\lambda+it)}-e^{u_0(2\lambda+it)}}{u_1-u_0}\right|\left(4\lambda^2+t^2\right)^{-1}\nonumber\\
&\qquad \times \left(\frac{e^{2x_1\lambda}+e^{2x_0\lambda}}{x_1-x_0}+\frac{e^{2u_1\lambda}+e^{2u_0\lambda}}{u_1-u_0}\right)^{-1}.
\end{align}
When $c\ge 7$ we use a simple estimate:
\begin{align}
G_{2,c}&\le 2\sup_{\substack{\lambda_{(1)},\lambda_{(2)}\ge \lambda_{min}\\ c-1\le t\le c+1}}\left((\lambda_{(1)}+\lambda_{(2)})^2+t^2\right)^{-1}\nonumber\\
&\le \frac{2}{4\lambda_{min}^2+(c-1)^2}.
\end{align}
For given constants $x_1,x_0,u_1,u_0,w,v$ and $\lambda_{min}$ we use \textit{Mathematica's} NMaximize function to calculate the bounds above for $G_{2,0}$ and $G_{2,c}$ for $1\le c\le 6$. We can estimate the bound given for $G_{2,c}$ when $7\le c\le 101$ exactly, and then for $c\ge 102$ we use an integral comparison to see that
\begin{align}
\sum_{c\ge 102}G_{2,c}&\le \sum_{m\ge 101}\frac{2}{4\lambda_{min}^2+m^2}\nonumber\\
&\le \int_{100}^\infty\frac{2}{4\lambda_{min}^2+t^2}dt\nonumber\\
&\le \frac{\tan^{-1}(\lambda_{min}/50)}{\lambda_{min}}.
\end{align}
We can then use this information to estimate $G_2$.
\begin{equation}G_2\le G_{2,0}+\sum_{1\le c\le 6}G_{2,c}+\sum_{6\le m\le 100}\frac{2}{4\lambda_{min}^2+m^2}+\frac{\tan^{-1}(\lambda_{min}/50)}{\lambda_{min}}.\end{equation}
As with the case in \cite{HB} it is optimal to choose $u_0=2w+1/3+\delta$ and $x_0=u_1+v+1/3+\delta$ with $\delta$ small. We will take $\delta=10^{-10}$ for our purposes. We are then left to choose suitable positive constants $w$, $u_1\ge u_0$, $v\ge u_1$ and $x_1\ge x_0$. We fix these now as
\begin{align}w=0.115 ,\quad u_0= 0.564, \quad u_1=0.620,\\
v=0.964,\quad x_0=1.413,\quad x_1=1.623.\end{align}
We consider $\lambda_{min}=0.35$. For this value we calculate that
\begin{equation}G_2\le 0.650.\end{equation}
Putting everything together we obtain
\begin{equation}\sum_{\substack{j,m\\ \lambda^{(j,m)}\ge 0.35}}\left(\frac{e^{3.246\lambda^{(j,m)}}+e^{2.826\lambda^{(j,m)}}}{0.210}+\frac{e^{1.240\lambda^{(j,m)}}+e^{1.128\lambda^{(j,m)}}}{0.056}\right)^{-1}\le 11.9288.\end{equation}
\subsection{Third Zero Density Estimate}
We now prove Lemma \ref{lmm:NewZeroDensity}. The proof uses the ideas \cite{HB}[Section 12] to obtain a stronger zero density estimate close to $1$, but agan we extend this to our slightly larger region with $\Im(\rho)\ll 1$. Specifically we wish to estimate
\begin{equation}N^*(\lambda):=\#\left\{\rho^{(j,m)}\in\mathcal{R}:\lambda^{(j,m)}\le \lambda\right\}\end{equation}
in the range $0\le \lambda\le 2$. We note that from the log-free zero density bound, that for $0\le \lambda\le 2$ we have that $N^*(\lambda)$ is uniformly bounded in $q$ and $\lambda$.

We adopt the notation of \cite{HB}. We put
\begin{equation}K(s,\chi):=\sum_{n=1}^\infty \Lambda(n)\Re\left(\frac{\chi(n)}{n^s}\right)g\left(\mathcal{L}^{-1}\log{n}\right)\end{equation}
for some function $g$ which satisfies:
\begin{cndtn}\label{Cndtn:NewCondition1}
$g:[0,\infty)\rightarrow \mathbb{R}$ is continuous, $g$ is supported on $[0,x_0)$ for some $x_0>0$, $g$ is twice differentiable on $(0,x_0)$ and $g''$ is bounded on $(0,x_0)$.
\end{cndtn}
\begin{cndtn}\label{Cndtn:NewCondition2}
$g$ is non-negative and its Laplace transform $G$ satisfies $\Re(G(z))\ge 0$ for $\Re(z)\ge 0$.
\end{cndtn}
We start with the following estimate
\begin{lmm}
Let $g$ be a function satisfying conditions 1 and 2 and let $\delta>0$. Then for $q>q_0(\delta,g)$ and $\lambda_1\ge \lambda_{11}$:

If
\begin{equation}\label{eq:NewZeroConditions}
G(\lambda-\lambda_{11})>g(0)/6\qquad \text{and} \qquad (G(\lambda-\lambda_{11})-g(0)/6)^2>G(-\lambda_{11})g(0)/6
\end{equation}
then we have
\begin{equation*}N^*(\lambda)\le \frac{G(-\lambda_{11})G_3}{(G(\lambda-\lambda_{11})-g(0)/6)^2-G(-\lambda_{11})g(0)/6}+\delta\end{equation*}
Where $G_3$ is defined in equation \eqref{eq:G3def}.
\end{lmm}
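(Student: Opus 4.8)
The plan is to run Heath-Brown's zero-detection argument from \cite{HB}[Section 12], with its mean-value input extended from the range $|\Im(\rho)|\ll\mathcal{L}^{-1}$ to the range $|\Im(\rho)|\ll 1$. There are two ingredients: a pointwise lower bound for $\mathcal{L}^{-1}|K(s,\chi)|$ produced by each zero we wish to count, and an upper bound for a weighted second moment of the Dirichlet polynomial underlying $K$; balancing them yields a quadratic inequality for $N^*(\lambda)$ whose solution is the stated bound.

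First I would record the basic \emph{zero-detection inequality}. From the Hadamard product for $L(s,\chi)$ and the estimates of \cite{HB}[Lemma 5.2] and \cite{HB}[Lemma 5.3] (applicable here because the detecting points have $|\Im(s)|\le\mathcal{L}$), one gets, for $\chi\ne\chi_0$ and $s$ to the right of (or on the line through) every zero of $\prod_\chi L(s,\chi)$,
\[
\sum_{\rho\in\mathcal{Z}(\chi)}\Re G\bigl((s-\rho)\mathcal{L}\bigr)\le\frac{g(0)\phi_\chi}{2}+\mathcal{L}^{-1}\bigl|K(s,\chi)\bigr|+\delta .
\]
Condition \ref{Cndtn:NewCondition1} ensures convergence of $K$ and validity of the contour shifts; Condition \ref{Cndtn:NewCondition2} ($\Re G\ge 0$ on $\Re z\ge 0$) lets us drop unwanted zeros from the left-hand side. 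Now put $\sigma_0=1-\lambda_{11}/\mathcal{L}$; the hypothesis $\lambda_1\ge\lambda_{11}$ gives $\sigma_0\ge\beta_1$, so $\sigma_0$ lies to the right of every zero. For each zero $\rho_r:=\rho^{(j_r,m_r)}$ with $\lambda^{(j_r,m_r)}\le\lambda$, put $s_r=\sigma_0+i\gamma^{(j_r,m_r)}$ and apply the inequality with $\chi=\chi^{(j_r)}$. The contribution of $\rho_r$ itself is $\Re G((s_r-\rho_r)\mathcal{L})=G(\lambda^{(j_r,m_r)}-\lambda_{11})\ge G(\lambda-\lambda_{11})$, since $g\ge 0$ makes $G$ non-increasing on $\mathbb{R}$ and $\lambda^{(j_r,m_r)}\le\lambda$; every other zero $\rho$ has $\Re((s_r-\rho)\mathcal{L})=\lambda_\rho-\lambda_{11}\ge\lambda_1-\lambda_{11}\ge 0$, hence contributes a non-negative amount by Condition \ref{Cndtn:NewCondition2} and is dropped. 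Using $\phi_\chi\le 1/3$ this gives, for every such $r$,
\[
\mathcal{L}^{-1}\bigl|K(s_r,\chi^{(j_r)})\bigr|\ge G(\lambda-\lambda_{11})-\frac{g(0)}{6}-\delta ,
\]
which is positive by the first hypothesis in \eqref{eq:NewZeroConditions}.

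For the upper bound I would combine these $N^*(\lambda)$ lower bounds via Cauchy--Schwarz and the duality principle, exactly as in the second density estimate above: introducing unimodular weights $\omega_r$ to align phases and dualising against the polynomial $\sum_n\Lambda(n)\chi(n)n^{-s}g(\mathcal{L}^{-1}\log n)$ leads to a second moment whose bilinear form over pairs of counted zeros splits into a diagonal part $\ll N^*(\lambda)\,G_3$, with $G_3$ the mean-value constant of \eqref{eq:G3def}, and an off-diagonal part $\ll N^*(\lambda)^2\,g(0)/6$ --- the latter controlled by character orthogonality when the characters differ and, when they coincide, by the vertical spacing $|\Im(\rho^{(j,m_1)})-\Im(\rho^{(j,m_2)})|\ge(|m_1-m_2|-1)/\mathcal{L}$ of the representatives together with the detection inequality itself --- while the norm of the dual vector supplies the overall factor $G(-\lambda_{11})$, computed via the Graham-type mollifier sums of the second estimate. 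Together with the square of the lower bound this yields, for $q>q_0(\delta,g)$,
\[
N^*(\lambda)^2\Bigl(G(\lambda-\lambda_{11})-\tfrac{g(0)}{6}\Bigr)^2\le G(-\lambda_{11})\Bigl(N^*(\lambda)G_3+N^*(\lambda)^2\tfrac{g(0)}{6}\Bigr)+O(\delta).
\]
Dividing by $N^*(\lambda)$ and rearranging --- legitimate since the second hypothesis in \eqref{eq:NewZeroConditions} forces $(G(\lambda-\lambda_{11})-g(0)/6)^2-G(-\lambda_{11})g(0)/6>0$ --- gives the asserted bound after rescaling $\delta$. If any of $\rho_1,\rho_1',\rho_2$ fails to exist, all bounds above only improve, as already observed.

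The main obstacle is the second-moment estimate just described, now carried out in the enlarged region. In \cite{HB} the detecting heights lie in an interval of length $\asymp\mathcal{L}^{-1}$, whereas here they range over an interval of length $\asymp 1$, so one must bound additional off-diagonal contributions from pairs of zeros at vertical distance $O(1)$ --- replacing Heath-Brown's purely local estimate by a hybrid-large-sieve-type input --- and check that the rectangle spacing of the representatives $\rho^{(j,m)}$ still keeps the off-diagonal constant equal to $g(0)/6$. Identifying the precise constant $G_3$ of \eqref{eq:G3def} and confirming that all the prime-number-theorem and divisor-sum errors are genuinely negligible is where most of the remaining work lies; the rest parallels the first two density estimates already established.
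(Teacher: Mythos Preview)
Your overall architecture matches the paper: detect each representative zero by the inequality coming from \cite{HB}[Lemma 5.2], sum over the $N^*(\lambda)$ zeros, Cauchy--Schwarz the resulting expression, expand the square, and solve the quadratic for $N^*(\lambda)$. The final inequality you display is exactly what the paper obtains.

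However, the machinery you invoke for the second-moment step is heavier than what is actually used, and in places misidentified. Three specific points:

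\begin{enumerate}
\item There is no duality principle here. After summing the detection inequality over $r$ one has
\[
N^*(\lambda)\bigl(G(\lambda-\lambda_{11})-\tfrac{g(0)}{6}\bigr)\le -\mathcal{L}^{-1}\sum_r K(\beta_{11}+i\gamma_r,\chi^{(j_r)})=\mathcal{L}^{-1}\sum_n\Lambda(n)n^{-\beta_{11}}g(\cdot)\,\Re\Bigl(-\sum_r\chi^{(j_r)}(n)n^{-i\gamma_r}\Bigr),
\]
and one applies Cauchy--Schwarz \emph{directly to the $n$-sum} with weight $\Lambda(n)\chi_0(n)n^{-\beta_{11}}g(\cdot)$. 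No unimodular phases or dual vectors are introduced.
\item The factor $G(-\lambda_{11})$ is not produced by any Graham-type mollifier computation; it is simply $\Sigma_1=\mathcal{L}^{-1}K(\beta_{11},\chi_0)$, bounded immediately by \cite{HB}[Lemma 5.3].
\item The off-diagonal terms ($j_1\ne j_2$) in $\Sigma_2$ are not handled by character orthogonality or a hybrid large sieve; they are $\mathcal{L}^{-1}K(\beta_{11}+i(\gamma_{r_1}-\gamma_{r_2}),\chi^{(j_1)}\overline{\chi}^{(j_2)})$, and \cite{HB}[Lemma 5.2] gives the pointwise bound $\le g(0)\phi_\chi/2\le g(0)/6$ directly. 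The same-character terms ($j_1=j_2$) are handled by \cite{HB}[Lemma 5.3], which replaces $K$ by $\Re G$; the vertical spacing of the $\rho^{(j,m)}$ enters only later when one \emph{numerically} bounds $G_3$, not in the proof of the lemma itself.
\end{enumerate}

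So the proof is considerably shorter and more elementary than your sketch suggests: once Lemmas 5.2 and 5.3 of \cite{HB} are in hand (and they apply verbatim since all detecting points have $|\Im s|\ll 1\le\mathcal{L}$), everything is a few lines. There is no additional large-sieve input to supply.
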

\begin{proof}
The first inequality of \cite{HB}[Section 12] shows that for $q>q_0(g,\delta_1)$ and $\beta_{11}=1-\lambda_{11}/\mathcal{L}$ we have
\begin{equation}\mathcal{L}^{-1}K(\beta_{11}+i\gamma^{(j,m)},\chi^{(j)})\le g(0)\phi_{\chi^{(j)}}/2+\delta_1-G(\lambda^{(j,m)}-\lambda_{11}).\end{equation}
Therefore, for any zero $\rho^{(j,m)}$ with $G(\lambda^{(j,m)}-\lambda_{11})>g(0)\phi_{\chi^{(j)}}/2$ we have
\begin{equation}0<G(\lambda^{(j,m)}-\lambda_{11})-g(0)\phi_{\chi^{(j)}}/2\le -\mathcal{L}^{-1}K(\beta_{11}+i\gamma^{(j,m)},\chi^{(j)})+\delta_1.\end{equation}
We note that $G(\lambda^{(j,m)}-\lambda_{11})$ is a decreasing function in $\lambda^{(j,m)}$ and recall that $\phi_\chi\le 1/3$ for all characters $\chi$. Therefore, if 
\begin{equation}\label{eq:NewZeroCondition}
G(\lambda-\lambda_{11})>g(0)/6,
\end{equation}
then for any $\lambda^{(j,m)}\le \lambda$ we have that
\begin{align}
0\le G(\lambda-\lambda_{11})-g(0)/6&\le G(\lambda^{(j,m)}-\lambda_{11})-g(0)\phi_{\chi^{(j)}}/2\nonumber\\
&\le -\mathcal{L}^{-1}K(\beta_{11}+i\gamma^{(j,m)},\chi^{(j)})+\delta_1.
\end{align}
We sum over all $j,m$ for which $\lambda^{(j,m)}\le \lambda$. Thus for $q>q_0(g,\delta_1)$ we have
\begin{align}
N^*(\lambda)(G(\lambda-&\lambda_{11})-g(0)/6)\nonumber\\
&\le \sum_{\substack{j,m\\ \lambda^{(j,m)}\le \lambda}}G(\lambda^{(j,m)}-\lambda_{11})-g(0)/6\nonumber\\
&\le -\mathcal{L}^{-1}\sum_{\substack{j,m\\ \lambda^{(j,m)}\le \lambda}}K(\beta_{11}+i\gamma^{(j,m)},\chi^{(j)})+\sum_{\substack{j,m\\ \lambda^{(j,m)}\le \lambda}}\delta_1\nonumber\\
&= -\mathcal{L}^{-1}\sum_{n=1}^\infty \Lambda(n)n^{-\beta_{11}}g(\mathcal{L}^{-1}\log{n})\Re\left(\sum_{\substack{j,m\\ \lambda^{(j,m)}\le \lambda}}\chi^{(j)}(n)n^{-i\gamma^{(j,m)}}\right)+\delta_2\nonumber\\
&\le \mathcal{L}^{-1}\sum_{n=1}^\infty\Lambda(n)n^{-\beta_{11}}\chi_0(n)g(\mathcal{L}^{-1}\log{n})\left|\sum_{\substack{j,m\\ \lambda^{(j,m)}\le \lambda}}\chi^{(j)}(n)n^{-i\gamma^{(j,m)}}\right|+\delta_2\nonumber\\
&\le\Sigma_1^{1/2}\Sigma_2^{1/2}+\delta_2\label{eq:MainNewIneq}
\end{align}
where
\begin{align}
\delta_2&=\sum_{\substack{j,m\\ \lambda^{(j,m)}\le \lambda}}\delta_1,\\
\Sigma_1&=\mathcal{L}^{-1}\sum_{n=1}^\infty\Lambda(n)n^{-\beta_{11}}\chi_0(n)g(\mathcal{L}^{-1}\log{n}),\\
\Sigma_2&=\mathcal{L}^{-1}\sum_{n=1}^\infty\Lambda(n)n^{-\beta_{11}}g(\mathcal{L}^{-1}\log{n})\left|\sum_{\substack{j,m\\ \lambda^{(j,m)}\le \lambda}}\chi^{(j)}(n)n^{-i\gamma^{(j,m)}}\right|^2.
\end{align}
By \cite{HB}[Lemma 5.3] for $q>q_0(g,\delta_1)$ we have
\begin{align}
\Sigma_1&=\mathcal{L}^{-1}K(\beta_{11},\chi_0)\nonumber\\ 
&\le G(-\lambda_{11})+\delta_1.\label{eq:NewSigma1}
\end{align}
We expand the square in $\Sigma_2$ and see that
\begin{equation}\Sigma_2=\Re(\Sigma_2)=\mathcal{L}^{-1}\sum_{\substack{j_1,j_2,m_1,m_2\\ \lambda^{(j_1,m_1)},\lambda^{(j_2,m_2)}\le \lambda}}K(\beta_{11}+i(\gamma^{(j_1,m_1)}-\gamma^{(j_2,m_2)}),\chi^{(j_1)}\overline{\chi}^{(j_2)}).\end{equation}
By \cite{HB}[Lemma 5.3] the terms with $j_1=j_2$ contribute a total
\begin{align}\mathcal{L}^{-1}\sum_{j_1,m_1,m_2}&K(\beta_{11}+i(\gamma^{(j_1,m_1)}-\gamma^{(j_1,m_2)}),\chi_0)\nonumber\\
&\le \sum_{j_1,m_1,m_2}\left(\left|\Re\left(G(-\lambda_{11}+i(v^{(j_1,m_1)}-v^{(j_1,m_2)}))\right)\right|+\delta_1\right).\end{align}
By \cite{HB}[Lemma 5.2] the terms with $j_1\ne j_2$ contribute
\begin{align}
\mathcal{L}^{-1}\sum_{j_1\ne j_2,m_1,m_2}K(\beta_{11}+i(\gamma^{(j_1,m_1)}-\gamma^{(j_2,m_2)}),\chi^{(j_1)}\overline{\chi}^{(j_2)})&\le \sum_{j_1\ne j_2,m_1,m_2}\left(g(0)/6+\delta_1\right).
\end{align}
Putting these together we get
\begin{align}\Sigma_2\le \sum_{\substack{j_1,m_1,m_2\\ \lambda^{(j_1,m_1)},\lambda^{(j_1,m_2)}\le \lambda}}\left(\left|\Re\left(G(-\lambda_{11}+i(\nu^{(j_1,m_1)}-\nu^{(j_1,m_2)}))\right)\right|-g(0)/6\right)\nonumber\\
+N^*(\lambda)^2g(0)/6+\delta_3.\end{align}
We put
\begin{equation}
G_3:=\sup_{j_1,m_1}\sum_{m_2}\left(\left|\Re\left(G(-\lambda_{11}+i(\gamma^{(j_1,m_1)}-\gamma^{(j_1,m_2)}))\right)\right|-g(0)/6\right),\label{eq:G3def}
\end{equation}
so
\begin{equation}\label{eq:NewSigma2}
\Sigma_2\le N^*(\lambda)^2g(0)/6+N^*(\lambda)G_3+\delta_3.
\end{equation}
Putting together \eqref{eq:MainNewIneq}, \eqref{eq:NewSigma1} and \eqref{eq:NewSigma2} we obtain
\begin{align}
N^*(\lambda)^2(G(\lambda-\lambda_{11})-g(0)/6)^2&\le \Sigma_1\Sigma_2+\delta_2\nonumber\\
&\le (G(-\lambda_{11})+\delta_1)(N^*(\lambda)^2g(0)/6+N^*(\lambda)G_3+\delta_3)+\delta_2.
\end{align}
Since $N^*(\lambda)$ is bounded uniformly for $0\le \lambda\le 2$ by the log-free zero density estimate, all the sums and terms are finite. Therefore, by a suitable choice of $\delta_1$ we have for given $\delta>0$ and $q>q_0(g,\delta)$ that
\begin{equation}N^*(\lambda)\left((G(\lambda-\lambda_{11})-g(0)/6)^2-G(-\lambda_{11})g(0)/6\right)^2\le G(-\lambda_{11})G_3+\delta\end{equation}
Therefore the lemma holds.
\end{proof}
We are now left to choose a suitable function $g$ and evaluate this expression. As in the work of Heath-Brown \cite{HB} and Xylouris \cite{Xylouris} we choose
\begin{equation}g(t):=\begin{cases}
\int^{\gamma}_{t-\gamma}(\gamma^2-x^2)(\gamma^2-(t-x)^2)dx\\
\qquad=-\frac{1}{30}t^5+\frac{2\gamma^2}{3}t^3-\frac{4\gamma^3}{3}t^2+\frac{16\gamma^5}{15},\qquad &t\in[0,2\gamma),\\
0,&t\ge 2\gamma,
\end{cases}\end{equation}
for some constant $\gamma>0$.

We see that $g$ is the convolution of $\max(0,\gamma^2-x^2)$ with itself, and so satisfies Condition 2 that $\Re(z)\ge 0\Rightarrow \Re(G(z))\ge 0$. We also see that it is twice differentiable on $(0,2\gamma)$ and its second derivative is continuous and bounded, and so also fulfills Condition 1.

We see the Laplace transform $G$ is
\begin{align}
G(z)&=\int_0^\infty e^{-z t}g(t)dt\nonumber\\
&=\begin{cases}
\frac{16\gamma^5}{15}z^{-1}-\frac{8\gamma^3}{3}z^{-3}+4\gamma^2(1+e^{-2\gamma z})z^{-4}\\
\qquad\qquad\qquad+4(-1+e^{-2\gamma z}+2\gamma z e^{-2\gamma z})z^{-6},\qquad &z\ne 0,\\
\frac{8\gamma^6}{9},&z=0.
\end{cases}
\end{align}
We bound $G_3$ in the same manner as we did in proving Lemma \ref{lmm:OldZeroDensity}. We recall
\begin{align}
G_3(\lambda)&=\sup_{m_1,j_1}\sum_{m_2}\left(\left|\Re(G(-\lambda_{11}+i(v^{(j_1,m_1)}-v^{(j_2,m_2)})))\right|-g(0)/6\right).
\end{align}
As in the proof of Lemma \ref{lmm:OldZeroDensity} we consider the contribution $G_{3,c}$ of zeros from rectangles $\mathcal{R}_{m_2}$ with $|m_1-m_2|=c\in\mathbb{Z}$.

We first consider $G_{3,0}$. There is only one zero $\rho^{(j_1,m_2)}=\rho^{(j_1,m_1)}$, if it exists. Thus
\begin{equation}G_{3,0}\le G(-\lambda_{11})-g(0)/6.\end{equation}
For $G_{3,c}$ with $1\le c \le 5$ we see that there are at most 2 zeros both with $c-1\le |v^{(j_1,m_1)}-v^{(j_1,m_2)}|\le c+1$. These contribute
\begin{equation}G_{3,c}\le 2\max\left(\sup_{c-1\le t\le c+1}|\Re(G(-\lambda_{11}+it))|-g(0)/6,0\right).\end{equation}
We estimate these using \textit{Mathematica's} NMaximize function.

We use a simpler bound to estimate $G_{3,c}$ with $c\ge 6$.
\begin{align}
|\Re(G(x+i y))|&\le \left|\frac{16\gamma^5}{15}\Re(z^{-1})\right|+\left|8\frac{\gamma^{3}}{3}\Re(z^{-3})\right|+4\gamma^{2}\left|\Re\left((1+e^{-2\gamma z})z^{-4}\right)\right|\nonumber\\
&\qquad\qquad+4\left|\Re\left((-1+e^{-2\gamma z}+2\gamma z e^{-2\gamma z})z^{-6}\right)\right|\nonumber\\
&\le \frac{16\gamma^5x}{15(x^2+y^2)}+\frac{8\gamma^3(|x|^3+3|x|y^2)}{3(x^2+y^2)^3}+\frac{4\gamma^2(1+e^{-2\gamma x})}{(x^2+y^2)^{2}}\nonumber\\
&\qquad\qquad+4(1+e^{-2\gamma x}+2\gamma (x^2+y^2)^{1/2}e^{-2\gamma x})(x^2+y^2)^{-3}\nonumber\\
&=:G_4(x,y).
\end{align}
We see that $G_4(x,y)$ is decreasing in $y$, and so
\begin{align}
G_{3,c}&\le 2\max\left(\sup_{c-1\le |t|\le c+1}\left|\Re(G(-\lambda_{11}+it))\right|-g(0)/6,0\right)\nonumber\\
&\le 2\max\left(G_4(-\lambda_{11},c-1)-g(0)/6,0\right).
\end{align}
We estimate this directly. We note that if $G_4(-\lambda_{11},c_1-1)\le g(0)/6$ then $G_{3,c}\le 0$ for all $c\ge c_1$.

Using these estimates we can then bound $G_3$ for any given value of our parameter $\gamma$ and a given lower bound for $\lambda_1$.

We consider separately the cases $\lambda_1\ge 0.35$, $\lambda_1\ge 0.40$, $\lambda_1\ge 0.44$, $\lambda_1\ge 0.52$, $\lambda_1\ge 0.60$, $\lambda_1\ge 0.66$ and $\lambda_1\ge 6/7$. In each case we choose $\gamma \in \{1.00,1.01,1.02, \dots,1.60\}$ which gives the best bound whilst ensuring that conditions \eqref{eq:NewZeroConditions} still hold.

We give the results in the following table. We note that in comparison with \cite{HB}[Table 13] these are worse by a factor of approximately 4, but are counting the number of rectangles containing a zero rather than just the number of characters.
\newpage
\begin{center}
\begin{longtable}{|c|c|c|c|c|c|c|c|}
\caption{Third Zero Density Estimate} \label{Table:NewZeroDensityTable}\\
\hline
$\lambda$ &  \multicolumn{7}{|c|}{Bound for $N^*(\lambda)$}\\
\cline{2-8}
 &$0.35\le\lambda_1$&$0.40\le\lambda_1$&$0.44\le\lambda_1$&	$0.52\le\lambda_1$&$0.60\le\lambda_1$&0.66 $\le\lambda_1$&$6/7\le\lambda_1$\\\hline
\endfirsthead
\hline
$\lambda$ &  \multicolumn{7}{|c|}{Bound for $N^*(\lambda)$ \textit{- continued from previous page}}\\
\cline{2-8}
 &$0.35\le\lambda_1$&$0.40\le\lambda_1$&$0.44\le\lambda_1$&	$0.52\le\lambda_1$&$0.60\le\lambda_1$&0.66 $\le\lambda_1$&$6/7\le\lambda_1$\\
\hline
\endhead
\hline
\multicolumn{8}{|c|}{\textit{Continued on next page...}}\\
\hline
\endfoot
\endlastfoot
0.74	 & 30	 & 29	 & 28	 & 27	 & 26	 & 26	 & - \\
0.75	 & 31	 & 30	 & 29	 & 28	 & 27	 & 26	 & - \\
0.76	 & 32	 & 31	 & 30	 & 29	 & 28	 & 27	 & - \\
0.77	 & 33	 & 32	 & 31	 & 30	 & 29	 & 28	 & - \\
0.78	 & 34	 & 33	 & 32	 & 31	 & 29	 & 29	 & - \\
0.79	 & 35	 & 34	 & 33	 & 32	 & 30	 & 29	 & - \\
0.80	 & 36	 & 35	 & 34	 & 32	 & 31	 & 30	 & - \\
0.81	 & 37	 & 36	 & 35	 & 33	 & 32	 & 31	 & - \\
0.82	 & 38	 & 37	 & 36	 & 34	 & 33	 & 32	 & - \\
0.83	 & 40	 & 38	 & 37	 & 35	 & 34	 & 33	 & - \\
0.84	 & 41	 & 39	 & 38	 & 37	 & 35	 & 34	 & - \\
0.85	 & 42	 & 41	 & 40	 & 38	 & 36	 & 35	 & - \\
0.86	 & 44	 & 42	 & 41	 & 39	 & 37	 & 36	 & - \\
0.87	 & 45	 & 44	 & 42	 & 40	 & 38	 & 37	 & 34 \\
0.88	 & 47	 & 45	 & 44	 & 41	 & 39	 & 38	 & 35 \\
0.89	 & 49	 & 47	 & 45	 & 43	 & 41	 & 39	 & 36 \\
0.90	 & 51	 & 49	 & 47	 & 44	 & 42	 & 40	 & 37 \\
0.91	 & 53	 & 50	 & 49	 & 46	 & 43	 & 42	 & 38 \\
0.92	 & 55	 & 52	 & 51	 & 47	 & 45	 & 43	 & 39 \\
0.93	 & 57	 & 54	 & 52	 & 49	 & 46	 & 44	 & 40 \\
0.94	 & 59	 & 57	 & 55	 & 51	 & 48	 & 46	 & 41 \\
0.95	 & 62	 & 59	 & 57	 & 53	 & 49	 & 47	 & 43 \\
0.96	 & 65	 & 61	 & 59	 & 55	 & 51	 & 49	 & 44 \\
0.97	 & 68	 & 64	 & 61	 & 57	 & 53	 & 51	 & 45 \\
0.98	 & 71	 & 67	 & 64	 & 59	 & 55	 & 52	 & 47 \\
0.99	 & 74	 & 70	 & 67	 & 61	 & 57	 & 54	 & 48 \\
1.00	 & 78	 & 73	 & 70	 & 64	 & 59	 & 56	 & 50 \\
1.01	 & 82	 & 77	 & 73	 & 67	 & 62	 & 58	 & 51 \\
1.02	 & 86	 & 80	 & 76	 & 70	 & 64	 & 61	 & 53 \\
1.03	 & 91	 & 84	 & 80	 & 73	 & 67	 & 63	 & 55 \\
1.04	 & 96	 & 89	 & 84	 & 76	 & 70	 & 66	 & 57 \\
1.05	 & 101	 & 94	 & 88	 & 80	 & 73	 & 68	 & 59 \\
1.06	 & 108	 & 99	 & 93	 & 83	 & 76	 & 71	 & 61 \\
1.07	 & 114	 & 105	 & 98	 & 88	 & 79	 & 74	 & 63 \\
1.08	 & 122	 & 111	 & 104	 & 92	 & 83	 & 78	 & 65 \\
1.09	 & 131	 & 118	 & 110	 & 97	 & 87	 & 81	 & 68 \\
1.10	 & 141	 & 127	 & 117	 & 103	 & 91	 & 85	 & 71 \\
1.11	 & 152	 & 136	 & 125	 & 109	 & 96	 & 89	 & 73 \\
1.12	 & 164	 & 146	 & 134	 & 115	 & 101	 & 94	 & 76 \\
1.13	 & 179	 & 157	 & 143	 & 122	 & 107	 & 98	 & 80 \\
1.14	 & 197	 & 171	 & 155	 & 130	 & 113	 & 104	 & 83 \\
1.15	 & 218	 & 186	 & 167	 & 139	 & 120	 & 110	 & 87 \\
1.16	 & 243	 & 205	 & 182	 & 150	 & 128	 & 116	 & 91 \\
1.17	 & 274	 & 226	 & 199	 & 161	 & 136	 & 123	 & 95 \\
1.18	 & 313	 & 253	 & 220	 & 175	 & 146	 & 131	 & 100 \\
1.19	 & 365	 & 286	 & 244	 & 190	 & 156	 & 140	 & 105 \\
1.20	 & 435	 & 328	 & 274	 & 208	 & 169	 & 149	 & 110 \\
1.21	 & 536	 & 383	 & 312	 & 229	 & 183	 & 160	 & 116 \\
1.22	 & 695	 & 458	 & 361	 & 255	 & 199	 & 173	 & 123 \\
1.23	 & 981	 & 568	 & 426	 & 286	 & 218	 & 187	 & 130 \\
1.24	 & 1642	 & 742	 & 518	 & 326	 & 241	 & 203	 & 138 \\
1.25	 & 4835	 & 1063	 & 658	 & 377	 & 268	 & 222	 & 146 \\
1.26	 & $\infty$	 & 1844	 & 895	 & 446	 & 301	 & 245	 & 156 \\
1.27	 & 	 & 6602	 & 1382	 & 543	 & 343	 & 272	 & 167 \\
1.28	 & 	 & $\infty$	 & 2967	 & 690	 & 397	 & 305	 & 179 \\
1.29	 & 	 & 	 & $\infty$	 & 940	 & 470	 & 347	 & 193 \\
1.30	 & 	 & 	 & 	 & 1457	 & 573	 & 400	 & 208 \\
1.31	 & 	 & 	 & 	 & 3156	 & 729	 & 471	 & 226 \\
1.32	 & 	 & 	 & 	 & $\infty$	 & 995	 & 569	 & 247 \\
1.33	 & 	 & 	 & 	 & 	 & 1549	 & 716	 & 272 \\
1.34	 & 	 & 	 & 	 & 	 & 3398	 & 958	 & 302 \\
1.35	 & 	 & 	 & 	 & 	 & $\infty$	 & 1433	 & 338 \\
1.36	 & 	 & 	 & 	 & 	 & 	 & 2782	 & 382 \\
1.37	 & 	 & 	 & 	 & 	 & 	 & 35205	 & 438 \\
1.38	 & 	 & 	 & 	 & 	 & 	 & $\infty$	 & 513 \\
1.39	 & 	 & 	 & 	 & 	 & 	 & 	 & 614 \\
1.40	 & 	 & 	 & 	 & 	 & 	 & 	 & 763 \\
1.41	 & 	 & 	 & 	 & 	 & 	 & 	 & 998 \\
1.42	 & 	 & 	 & 	 & 	 & 	 & 	 & 1430 \\
1.43	 & 	 & 	 & 	 & 	 & 	 & 	 & 2480 \\
1.44	 & 	 & 	 & 	 & 	 & 	 & 	 & 8791 \\
1.45	 & 	 & 	 & 	 & 	 & 	 & 	 & $\infty$ \\
\hline
\end{longtable}
\end{center}
\newpage
\section{Proof of Proposition \ref{Prpstn:OverallResult}}
We wish to estimate
\begin{equation*}\sum_{\chi\ne\chi_0}\sum_{m\in\mathbb{Z}}\sum_{\rho\in\mathcal{R}_m\cap\mathcal{Z}(\chi)}\exp(-M\lambda_\rho).\end{equation*}
We do this by Lemmas \ref{lmm:FirstZeroDensity}, \ref{lmm:OldZeroDensity} and \ref{lmm:NewZeroDensity}.

We split the argument into 2 sections, when there is a zero close to one (in which case it must be a real zero from a real character) and when there are no zeros close to one (and so $\rho_1$ or $\chi_1$ might be complex).

The work in this section follows along the same lines as that of \cite{HB}[Sections 14 and 15].
\subsection{A Zero close to 1}
We consider the case when $\eta\le \lambda_1\le 0.35$. By \cite{Xylouris}[Tabelle 11] we see that such a zero cannot exist if $\chi_1$ or $\rho_1$ is complex, and hence $\rho_1$ must be a real zero corresponding to a real character. Moreover, $\rho_1$ is simple. Since $\chi_1$ is real we have that $\phi_{\chi_1}=1/4$.

We first consider the contribution from characters $\chi^{(j)}\ne\chi_1$.

We note that
\begin{equation}\frac{\exp(-M\lambda)}{B_1(\lambda)}=\left(\frac{\lambda}{\sinh(K\lambda/2)}\right)^2\left(1+\frac{1}{4\lambda^2}\right)e^{-(M-K)\lambda}.\end{equation}
The first two terms in the product are decreasing in $\lambda$, and so for $M\ge K$ this is a decreasing function of $\lambda$. Therefore for all $\rho\in\mathcal{R}_m\cap\mathcal{Z}(\chi^{(j)})$, if $M\ge K$, we have
\begin{equation}\exp(-M\lambda_\rho)\le \frac{\exp(-M\lambda^{(j,m)})}{B_1(\lambda^{(j,m)})}B_1(\lambda_\rho).\end{equation}
Thus by Lemma \ref{lmm:FirstZeroDensity} we have
\begin{align}
\sum_{\rho\in\mathcal{R}_m\cap\mathcal{Z}(\chi^{(j)})}\exp(-M\lambda_\rho)&\le \frac{\exp(-M\lambda^{(j,m)})}{B_1(\lambda^{(j,m)})}\sum_{\rho\in\mathcal{R}_m\cap\mathcal{Z}(\chi^{(j)})}B_1(\lambda_\rho)\nonumber\\
&\le \frac{\exp(-M\lambda^{(j,m)})C_1(\lambda^{(j,m)})}{B_1(\lambda^{(j,m)})}.
\end{align}
We note that
\begin{equation*}\frac{\exp(-2x_1\lambda)}{B_2(\lambda)}\qquad \text{and}\qquad C_1(\lambda)\end{equation*}
are a decreasing functions in $\lambda$. Thus for $M\ge 2x_1+K$ we have that
\begin{equation}\frac{\exp(-M\lambda)C_1(\lambda)}{B_1(\lambda)B_2(\lambda)}\end{equation}
is a decreasing function in $\lambda$. Since for $\chi^{(j)}\ne \chi_1$ we have $\lambda^{(j,m)}\ge \lambda_2$, this gives us
\begin{equation}\label{chinot1}
\sum_{\substack{j,m\\ \chi^{(j)}\ne\chi_1,\chi_0}}\sum_{\rho\in\mathcal{R}_m\cap\mathcal{Z}(\chi^{(j)})}\exp(-M\lambda_\rho)\le \frac{\exp(-M\lambda_2)C_1(\lambda_2)}{B_2(\lambda_2)B_1(\lambda_2)}\sum_{\substack{j,m\\ \chi^{(j)}\ne\chi_1,\chi_0}}B_2(\lambda^{(j,m)}).
\end{equation}
We now consider the contribution from the character $\chi_1$. We give the zero $\rho_1$ close to 1 special treatment, and so treat the rectangle $\mathcal{R}_{0}$ which contains $\rho_1$ differently ($\rho_1\in\mathcal{R}_0$ since $\rho_1$ is real).

We first consider the contribution from rectangles $\mathcal{R}_m$ with $m\ne0$. Using the same ideas as above we have
\begin{equation}\label{chi1mnot0}
\sum_{m\ne 0}\sum_{\rho\in\mathcal{R}_m\cap\mathcal{Z}(\chi_1)}\exp(-M\lambda_\rho)\le \frac{\exp(-M\lambda_1')C_1(\lambda_1')}{B_2(\lambda_1')B_1(\lambda_1')}\sum_{\substack{m\ne0\\ \chi^{(j)}=\chi_1}}B_2(\lambda^{(j,m)}).
\end{equation}
We now consider the rectangle $\mathcal{R}_0$. We have
\begin{align}
\sum_{\rho\in\mathcal{R}_0\cap\mathcal{Z}(\chi_1)}\exp(-M\lambda_\rho)&\le \exp(-M\lambda_1) +\frac{\exp(-M\lambda_1')}{B_1(\lambda_1')}\sum_{\substack{\rho\in\mathcal{R}_0\cap\mathcal{Z}(\chi)\\ \rho\ne\rho_1}}B_1(\lambda_\rho)\nonumber\\
&\le \exp(-M\lambda_1)+\frac{\exp(-M\lambda_1')}{B_1(\lambda_1')}\sum_{\rho\in\mathcal{R}_0\cap\mathcal{Z}(\chi)}B_1(\lambda_\rho)\nonumber\\
&\le \exp(-M\lambda_1)+\frac{\exp(-M\lambda_1')C_1(\lambda_1)}{B_1(\lambda_1')}.
\end{align}
We note that $B_2(\lambda)$ and $C_1(\lambda)$ are both decreasing in $\lambda$. Therefore
\begin{equation}\sum_{\rho\in\mathcal{R}_0\cap\mathcal{Z}(\chi_1)}\exp(-M\lambda_\rho)\le \exp(-M\lambda_1)+\left(\frac{\exp(-M\lambda_1')C_1(0)}{B_1(\lambda_1')B_2(\lambda_1')}\right)B_2(\lambda_1).\end{equation}
Combining this with \eqref{chi1mnot0} and using the fact the $C_1$ is decreasing we obtain
\begin{equation}\label{chi1}
\sum_{\substack{j,m\\ \chi^{(j)}=\chi_1}}\sum_{\rho\in\mathcal{R}_m\cap\mathcal{Z}(\chi_1)}\exp(-M\lambda_\rho)\le\exp(-M\lambda_1)+ \frac{\exp(-M\lambda_1')C_1(0)}{B_1(\lambda_1')B_2(\lambda_1')}\sum_{\substack{j,m\\ \chi^{(j)}=\chi_1}}B_2(\lambda^{(j,m)}).
\end{equation}
Now combining \eqref{chi1} and \eqref{chinot1} we get
\begin{align}\sum_{\chi\ne\chi_0}\sum_{\rho\in\mathcal{R}\cap\mathcal{Z}(\chi)}\exp(-M\lambda_\rho)&\le \exp(-M\lambda_1)+C_4(\lambda_1',\lambda_2)\sum_{j,m}B_2(\lambda^{(j,m)})\nonumber\\
&\le \exp(-M\lambda_1)+C_4(\lambda_1',\lambda_2)C_2,
\end{align}
where
\begin{equation}C_4(\lambda_1',\lambda_2)=\max\left(\frac{\exp(-M\lambda_2)C_1(\lambda_2)}{B_1(\lambda_2)B_2(\lambda_2)},\frac{\exp(-M\lambda_1')C_1(0)}{B_1(\lambda_1')B_2(\lambda_1')}\right).\end{equation}
By \cite{HB}[Lemmas 8.4 and 8.8] for any $\delta>0$ and for all $q\ge q_0(\delta)$ we have
\begin{equation}\lambda_1',\lambda_2\ge \left(\frac{12}{11}-\delta\right)\log(\lambda_1^{-1}).\end{equation}
Also by \cite{HB}[Tables 4 and 7] for $\lambda_1\le 0.35$ we have that
\begin{equation}\lambda_1'\ge 2.19,\qquad\lambda_2\ge 1.42.\end{equation}
Thus, since $C_4(\lambda_1',\lambda_2)$ is decreasing in $\lambda_1'$ and $\lambda_2$, we have for any constant $B$ with $0\le B\le M-K-2x_1$
\begin{align}C_4(\lambda_1',\lambda_2)&\le \exp\left(-\left(\frac{12}{11}-\delta\right)B\log(\lambda_1^{-1})\right)\nonumber\\
&\qquad\times\max\left(\frac{\exp(-(M-B)\times 1.42)C_1(1.42)}{B_1(1.42)B_2(1.42)},\frac{\exp(-(M-B)\times2.19)C_1(0)}{B_1(2.19)B_2(2.19)}\right).
\end{align}
We choose
\begin{equation}B=1, \delta=0.01, K=0.66\end{equation}
and as before
\begin{align}w=0.115 ,\quad u_0= 0.564, \quad u_1=0.620,\\
v=0.964,\quad x_0=1.413,\quad x_1=1.623.\end{align}
Given $M$ we can now explicitly calculate the above quantities. For $M=7.5$ we obtain
\begin{equation}\sum_{\chi\ne\chi_0}\sum_{\rho\in\mathcal{R}\cap\mathcal{Z}(\chi)}\exp(7.5\lambda_\rho)\le \exp(-7.5\lambda_1)+2.38\times \lambda_1^{1.08}.\end{equation}
We see that the right hand side is a function which is 1 when $\lambda_1=0$, and is decreasing at $0$. Moreover, it is convex (has positive second derivative) on $(0,\infty)$ and so can have at most one turning point, which would be a minimum should it exist. Therefore the right hand side is always $<1$ for $\lambda_1\in [\eta,0.35]$ if it is $<1$ at 0.35.

Calculating this at 0.35 with $M=7.5$ gives 0.8628.., and so this is $<1$ for $\lambda_1\in[\eta,0.35]$ provided $M\ge 7.5$.

\subsection{No Zeroes close to 1}
We now consider the case when $\lambda_1\ge 0.35$.

As above, for characters $\chi^{(j)}\ne\chi_1,\overline{\chi}_1$ we have
\begin{align}
\sum_{\rho\in\mathcal{R}\cap\mathcal{Z}(\chi^{(j)})}\exp(-M\lambda_\rho)&\le  \sum_m\frac{\exp(-M\lambda^{(j,m)})}{B_1(\lambda^{(j,m)})}\sum_{\rho\in\mathcal{R}_m\cap\mathcal{Z}(\chi^{(j)})}B_1(\lambda_\rho)\nonumber\\
&\le \sum_m\frac{\exp(-M\lambda^{(j,m)})C_1(\lambda^{(j,m)})}{B_1(\lambda^{(j,m)})}.\label{eq:NotChi1}
\end{align}
We now consider the contributions for the character $\chi_1$ (and $\overline{\chi_1}$ if $\chi_1$ complex). We separate out the contribution of $\rho_1$ (and $\overline{\rho}_1$ if it exists). To do this we put
\begin{align}
n_1(\chi_1)&=
\begin{cases}
2,\quad &\chi_1 \text{ complex}\\
1,&\text{otherwise}
\end{cases}\\
n_2(\chi_1)&=
\begin{cases}
2,\quad &\chi_1 \text{ real and }\rho_1 \text{ complex}\\
1,&\text{otherwise}
\end{cases}\\
n_3(\chi_1)&=
\begin{cases}
2,\quad &\chi_1 \text{ real and }\rho_1 \text{ complex and }\rho_1\notin\mathcal{R}_0\\
1,&\text{otherwise.}
\end{cases}
\end{align}
We then have
\begin{equation}\sum_{\rho\in\mathcal{R}\cap\mathcal{Z}(\chi_1)}\exp(-M\lambda_1)=n_2(\chi_1)\exp(-M\lambda_\rho)+  \sum_m\sum_{\substack{\rho\in\mathcal{R}_m\cap\mathcal{Z}(\chi_1)\\ \rho\ne\rho_1,\overline{\rho}_1}}\exp(-M\lambda_\rho).\end{equation}
We separate out the contribution from the rectangle $\mathcal{R}_{m_1}$ which contains $\rho_1$. If $\chi_1$ is real and $\rho_1$ is complex then we also separate the rectangle $\mathcal{R}_{m_2}$ which contains $\overline{\rho}_1$ if this is different to $\mathcal{R}_{m_1}$. We note that all zeros in either of these rectangles have either $\lambda_\rho=\lambda_1$ or $\lambda_\rho\ge \lambda_1'$. The zeros in any other rectangle $\mathcal{R}_m$ have $\lambda_\rho\ge \lambda^{(j,m)}$. We then use Lemma \ref{lmm:FirstZeroDensity} again. This gives
\begin{align}
\sum_{\rho\in\mathcal{R}\cap\mathcal{Z}(\chi_1)}\exp(-M\lambda_\rho)&=n_2(\chi_1)\exp(-M\lambda_1)+\sum_{\substack{\rho\in(\mathcal{R}_{m_1}\cup\mathcal{R}_{m_2})\cap\mathcal{Z}(\chi_1)\\ \rho\ne\rho_1, \overline{\rho}_1}}\exp(-M\lambda_\rho)\nonumber\\
&\qquad\qquad\qquad+\sum_{m\ne m_1,m_2} \sum_{\rho\in\mathcal{R}_m\cap\mathcal{Z}(\chi_1)}\exp(-M\lambda_\rho)\nonumber\\
&\le \sum_{m\ne m_1,m_2} \frac{\exp(-M\lambda^{(j,m)}) C_1(\lambda^{(j,m)})}{B_1(\lambda^{(j,m)})}\nonumber\\
&\qquad\qquad +\frac{\exp(-M\lambda_1')}{B_1(\lambda_1')}\sum_{\rho\in(\mathcal{R}_{m_1}\cup\mathcal{R}_{m_2})\cap\mathcal{Z}(\chi_1)}B_1(\lambda_\rho)\nonumber\\
&\qquad\qquad+n_2(\chi_1)\left(\exp(-M\lambda_1)-\frac{\exp(-M\lambda_1')}{B_1(\lambda_1')}B_1(\lambda_1)\right)\nonumber\\
&\le \sum_{m\ne m_1,m_2} \frac{\exp(-M\lambda^{(j,m)})C_1(\lambda^{(j,m)})}{B_1(\lambda^{(j,m)})} +n_3(\chi_1)\frac{\exp(-M\lambda_1')C_1(\lambda_1)}{B_1(\lambda_1')}\nonumber\\
&\qquad\qquad+n_2(\chi_1)\left(\exp(-M\lambda_1)-\frac{\exp(-M\lambda_1')}{B_1(\lambda_1')}B_1(\lambda_1)\right)\nonumber\\
&\le \left(n_2(\chi_1)B_1(\lambda_1)-n_3(\chi_1)C_1(\lambda_1)\right)\left(\frac{\exp(-M\lambda_1)}{B_1(\lambda_1)}-\frac{\exp(-M\lambda_1')}{B_1(\lambda_1')}\right) \nonumber\\
&\qquad\qquad+\sum_{m}\frac{\exp(-M\lambda^{(j,m)})C_1(\lambda^{(j,m)})}{B_1(\lambda^{(j,m)})}.\label{eq:Chi1}
\end{align}
If $\chi_1$ is complex we follow the same argument and obtain the same result for $\overline{\chi}_1$.

Putting together \eqref{eq:NotChi1} and \eqref{eq:Chi1} we obtain
\begin{equation}\label{applied3}
\sum_{\chi\ne\chi_0}\sum_{\rho\in\mathcal{R}\cap\mathcal{Z}(\chi)}\exp(-M\lambda_\rho)\le\sum_{m,j}\frac{\exp(-M\lambda^{(j,m)})C_1(\lambda^{(j,m)})}{B_1(\lambda^{(j,m)})}+A_1,
\end{equation}
where
\begin{equation}A_1=n_1(\chi_1)\left(n_2(\chi_1)B_1(\lambda_1)-n_3(\chi_1)C_1(\lambda_1)\right)\left(\frac{\exp(-M\lambda_1)}{B_1(\lambda_1)}-\frac{\exp(-M\lambda_1')}{B_1(\lambda_1')}\right).\end{equation}
We now use Lemmas \ref{lmm:OldZeroDensity} and \ref{lmm:NewZeroDensity} to estimate the sum on the right hand side of \eqref{applied3}. We fix a constant $\Lambda$ (to be declared later) and consider separately the terms with $\lambda^{(j,m)}>\Lambda$ and $\lambda^{(j,m)}\le \Lambda$. We use Lemma \ref{lmm:OldZeroDensity} to estimate the first set of terms, and Lemma \ref{lmm:NewZeroDensity} to estimate the second set.

We first consider the terms with $\lambda^{(j,m)}>\Lambda$.
\begin{equation}\sum_{\substack{j,m\\ \lambda^{(j,m)}>\Lambda}}\frac{\exp(-M\lambda^{(j,m)})C_1(\lambda^{(j,m)})}{B_1(\lambda^{(j,m)})}=\sum_{j,m}\left(\frac{\exp(-M\lambda^{(j,m)})C_1(\lambda^{(j,m)})}{B_1(\lambda^{(j,m)})B_2(\lambda^{(j,m)})}\right)B_2(\lambda^{(j,m)}).\end{equation}
Again we note that
\begin{equation*}\frac{\exp(-K\lambda)}{B_1(\lambda)},\qquad \frac{\exp(-2x_1\lambda)}{B_2(\lambda)},\qquad\text{and}\qquad C_1(\lambda)\end{equation*}
are all decreasing functions of $\lambda$. Therefore, provided $M\ge K+2x_1$ we have
\begin{align}
\sum_{\substack{j,m\\ \lambda^{(j,m)}>\Lambda}}&\frac{\exp(-M\lambda^{(j,m)})C_1(\lambda^{(j,m)})}{B_1(\lambda^{(j,m)})}\nonumber\\
&\qquad\le \left(\frac{\exp(-M\Lambda)C_1(\Lambda)}{B_1(\Lambda)B_2(\Lambda)}\right)\sum_{\substack{j,m\\  \lambda^{(j,m)}>\Lambda}}B_2(\lambda^{(j,m)})\nonumber\\
&\qquad=\left(\frac{\exp(-M\Lambda)C_1(\Lambda)}{B_1(\Lambda)B_2(\Lambda)}\right)\sum_{j,m}B_2(\lambda^{(j,m)})\nonumber\\
&\qquad\qquad-\left(\frac{\exp(-M\Lambda)C_1(\Lambda)}{B_1(\Lambda)B_2(\Lambda)}\right)\sum_{\substack{j,m\\ \lambda^{(j,m)}\le\Lambda}}B_2(\lambda^{(j,m)})\nonumber\\
&\qquad\le \frac{\exp(-M\Lambda)C_1(\Lambda)C_2}{B_1(\Lambda)B_2(\Lambda)}-\left(\frac{\exp(-M\Lambda)C_1(\Lambda)}{B_1(\Lambda)B_2(\Lambda)}\right)\sum_{\substack{j,m\\ \lambda^{(j,m)}\le\Lambda}}B_2(\lambda^{(j,m)}).
\end{align}
Hence
\begin{align}
\sum_{m,j}&\frac{\exp(-M\lambda^{(j,m)})C_1(\lambda^{(j,m)})}{B_1(\lambda^{(j,m)})} \le\frac{\exp(-M\Lambda)C_1(\Lambda)C_2}{B_1(\Lambda)B_2(\Lambda)}\nonumber\\ 
&\qquad\qquad+\sum_{\substack{j,m\\ \lambda^{(j,m)}\le\Lambda}}\left(\frac{\exp(-M\lambda^{(j,m)})C_1(\lambda^{(j,m)})}{B_1(\lambda^{(j,m)})B_2(\lambda^{(j,m)})}-\frac{\exp(-M\Lambda)C_1(\Lambda)}{B_1(\Lambda)B_2(\Lambda)}\right)B_2(\lambda^{(j,m)})\label{applied1}.
\end{align}
We therefore are left to evaluate
\begin{equation}\sum_{\substack{j,m\\ \lambda^{(j,m)}\le\Lambda}}\left(\frac{\exp(-M\lambda^{(j,m)})C_1(\lambda^{(j,m)})}{B_1(\lambda^{(j,m)})B_2(\lambda^{(j,m)})}-\frac{\exp(-M\Lambda)C_1(\Lambda)}{B_1(\Lambda)B_2(\Lambda)}\right)B_2(\lambda^{(j,m)}).\end{equation}
To ease notation we put
\begin{equation}D(\lambda)=\left(\frac{\exp(-M\lambda)C_1(\lambda)}{B_1(\lambda)B_2(\lambda)}-\frac{\exp(-M\Lambda)C_1(\Lambda)}{B_1(\Lambda)B_2(\Lambda)}\right)B_2(\lambda).\end{equation}
We note that $D(\lambda)$ is a decreasing function of $\lambda$ (and is non-negative for $\lambda\le \Lambda$).

We separate the terms for $\lambda_1$ and put $\lambda^*=\min(\lambda_1',\lambda_2)$. This gives
\begin{equation}\sum_{\substack{j,m\\ \lambda^{(j,m)}\le \Lambda}}D(\lambda^{(j,m)})=n_3(\chi_1)n_1(\chi_1)D(\lambda_1)+\sum_{\substack{j,m\\  \lambda^*\le \lambda^{(j,m)}\le \Lambda}}D(\lambda^{(j,m)}).\end{equation}
We put $\Lambda_r=\Lambda-(0.01)r$ and define $s$ such that $\Lambda_{s+1}\le \lambda^*< \Lambda_s$. We then split the sum into a sum over the different ranges $\Lambda_{r+1}\le \lambda^{(j,m)}< \Lambda_r$.
\begin{align}
\sum_{\substack{j,m\\ \lambda^*\le \lambda^{(j,m)}\le \Lambda}}D(\lambda^{(j,m)})&\le \sum_{r=0}^{s-1}\sum_{\substack{j,m\\ \Lambda_{r+1}\le \lambda^{(j,m)}\le \Lambda_r}}D(\lambda^{(j,m)})+\sum_{\substack{j,m\\ \lambda^*\le \lambda^{(j,m)}\le \Lambda_s}}D(\lambda^{(j,m)})\nonumber\\
&\le (N^*(\Lambda_s)-n_1(\chi_1)n_3(\chi_1))D(\lambda^*)\nonumber\\
&\qquad+\sum_{r=0}^{s-1}(N^*(\Lambda_r)-N^*(\Lambda_{r+1}))D(\Lambda_{r+1}).
\end{align}
Note that we have used the fact that $D(\lambda)$ is decreasing in $\lambda$.

By Abel's identity we have
\begin{align}
\sum_{\substack{j,m\\ \lambda^*\le \lambda^{(j,m)}\le \Lambda}}D(\lambda^{(j,m)})&\le -n_1(\chi_1)n_3(\chi_1)D(\lambda^*)+N^*(\Lambda_s)(D(\lambda^*)-D(\Lambda_s))\nonumber\\
&\qquad\qquad\qquad+\sum_{r=0}^{s-1}N^*(\Lambda_r)(D(\Lambda_{r+1})-D(\Lambda_r)),
\end{align}
since $D(\Lambda)=0$.

Since $D(\Lambda_{r+1})\ge D(\Lambda_r)$ and $D(\lambda^*)\ge D(\Lambda_s)$ we may replace $N^*(\lambda)$ with an upper bound, say $N_0^*(\lambda)$. This gives
\begin{align}
\sum_{\substack{j,m\\ \lambda*\le \lambda^{(j,m)}\le \Lambda}}D(\lambda^{(j,m)})&\le -n_1(\chi_1)n_3(\chi_1)D(\lambda^*)+N^*_0(\Lambda_s)D(\lambda^*)\nonumber\\
&\qquad\qquad\qquad+\sum_{r=0}^{s-1}(N^*_0(\Lambda_r)-N^*_0(\Lambda_{r+1}))D(\Lambda_{r+1}).
\end{align}

Hence
\begin{align}
\sum_{\substack{j,m\\ \lambda^{(j,m)}\le \Lambda}}D(\lambda^{(j,m)})&\le n_1(\chi_1)n_3(\chi_1)(D(\lambda_1)-D(\lambda^*))+N^*_0(\Lambda_s)D(\lambda^*)\nonumber\\
&\qquad\qquad+\sum_{r=0}^{s-1}(N^*_0(\Lambda_r)-N^*_0(\Lambda_{r+1}))D(\Lambda_{r+1}).\label{applied2}
\end{align}
Putting \eqref{applied3}, \eqref{applied1} and \eqref{applied2} together we obtain
\begin{align}
\sum_{\chi\ne\chi_0}\sum_{\rho\in\mathcal{R}\cap\mathcal{Z}(\chi)}\exp(-M\lambda_\rho)&\le \frac{\exp(-M\Lambda)C_1(\Lambda)C_2}{B_1(\Lambda)B_2(\Lambda)}+\sum_{r=0}^{s-1}(N^*_0(\Lambda_r)-N^*_0(\Lambda_{r+1}))D(\Lambda_{r+1})\nonumber\\
&\qquad+N^*_0(\Lambda_s)D(\lambda^*)+A_1',\label{RHS}
\end{align}
where
\begin{align}
A_1'&=n_1(\chi_1)n_2(\chi_1)B_1(\lambda_1)\left(\frac{\exp(-M\lambda_1)}{B_1(\lambda_1)}-\frac{\exp(-M\lambda_1')}{B_1(\lambda_1')}\right)\nonumber\\
&+n_1(\chi_1)n_3(\chi_1)\left(D(\lambda_1)-D(\lambda^*)-C_1(\lambda_1)\left(\frac{\exp(-M\lambda_1)}{B_1(\lambda_1)}-\frac{\exp(-M\lambda_1')}{B_1(\lambda_1')}\right)\right).
\end{align}
We now wish to bound this when we consider $\lambda_1,\lambda_1'$ and $\lambda_2$ constrained in size. Specifically, we consider $\lambda_1\in[\lambda_{11},\lambda_{12}]$, $\lambda_2\ge\lambda_{21}$ and $\lambda_1'\ge \lambda_{11}'$.

By definition $N_0^*(\Lambda_s)\ge n_1(\chi_1)n_3(\chi_1)$, and so the coefficient of $D(\lambda^*)$ is $>0$. Since $D$ is a decreasing function, the right hand side of \eqref{RHS} is decreasing as a function of $\lambda_2$. The term $B_1(\lambda_1)$ occurs $n_2(\chi_1)/n_3(\chi_1)$ times in the sum
\begin{equation*}\sum_{\rho\in\mathcal{R}_0\cap\mathcal{Z}(\chi_1)}B_1(\lambda_\rho).\end{equation*}
Since the sum is $\le C_1(\lambda_1)$, and all terms in the sum are positive we have that
\begin{equation}n_2(\chi_1)B_1(\lambda_1)\le n_3(\chi_1)C_1(\chi_1).\end{equation}
Therefore, by expanding out $A'$ we see that the right hand side of \eqref{RHS} is also decreasing as a function of $\lambda_1'$.

Therefore we may replace them $\lambda_1'$ and $\lambda_2$ with their lower bounds $\lambda_{11}'$ and $\lambda_{21}$ respectively.

Considering this bound as a function of $\lambda_1$ we find that the right hand side is
\begin{align}
&n_1(\chi_1)n_2(\chi_1)B_1(\lambda_1)\left(\frac{\exp(-M\lambda_1)}{B_1(\lambda_1)}-\frac{\exp(-M\lambda_{11}')}{B_1(\lambda_{11}')}\right)\nonumber\\
&\qquad\qquad+n_1(\chi_1)n_3(\chi_1)\left(\frac{\exp(-M\lambda_{11}')}{B_1(\lambda_{11}')}C_1(\lambda_1)-\frac{\exp(-M\Lambda)C_1(\Lambda)}{B_1(\Lambda)B_2(\Lambda)}B_2(\lambda_1)\right)+C,
\end{align}
where $C$ is independent of $\lambda_1$. We see this is
\begin{align}
&\le 2B_1(\lambda_{11})\left(\frac{\exp(-M\lambda_{11})}{B_1(\lambda_{11})}-\frac{\exp(-M\lambda_{11}')}{B_1(\lambda_{11}')}\right)\nonumber\\
&\qquad\qquad+n_1(\chi_1)n_3(\chi_1)\left(\frac{\exp(-M\lambda_{11}')}{B_1(\lambda_{11}')}C_1(\lambda_{11})-\frac{\exp(-M\Lambda)C_1(\Lambda)}{B_1(\Lambda)B_2(\Lambda)}B_2(\lambda_{12})\right)+C.
\end{align}
Therefore we obtain
\begin{align}
\sum_{\chi\ne\chi_0}\sum_{\rho\in\mathcal{R}\cap\mathcal{Z}(\chi)}\exp(-M\lambda_\rho)&\le \frac{\exp(-M\Lambda)C_1(\Lambda)C_2}{B_1(\Lambda)B_2(\Lambda)}+\sum_{r=0}^{s-1}(N^*_0(\Lambda_r)-N^*_0(\Lambda_{r+1}))D(\Lambda_{r+1})\nonumber\\
&\qquad\qquad+N^*_0(\Lambda_s)D(\lambda^*)+A_1'',\label{FinalSetup}
\end{align}
where
\begin{align}
A_1''&=
2B_1(\lambda_{11})\left(\frac{\exp(-M\lambda_{11})}{B_1(\lambda_{11})}-\frac{\exp(-M\lambda_{11}')}{B_1(\lambda_{11}')}\right)\nonumber\\
&\qquad\qquad+n_4\left(\frac{\exp(-M\lambda_{11}')}{B_1(\lambda_{11}')}C_1(\lambda_{11})-\frac{\exp(-M\Lambda)C_1(\Lambda)}{B_1(\Lambda)B_2(\Lambda)}B_2(\lambda_{12})-D(\lambda^*)\right),
\end{align}
and $n_4$ is chosen to be 1 or 2 so as to give the largest value for $A_1''$.

We now proceed to estimate \eqref{FinalSetup} for various ranges of $\lambda_1$ which cover the region $\lambda_1\ge0.35$. We consider
\begin{equation}M=7.999.\end{equation}
For each range of $\lambda_1$ we use the lower bounds for $\lambda_1'$ and $\lambda_2$ as given by \cite{Xylouris}[Tabelle 2, 3, 7] and \cite{HB}[Table 4 and 7]. We use the upper bounds for $N_0^*$ as calculated in Table \ref{Table:NewZeroDensityTable}.

We give these bounds on $\lambda_1'$ and $\lambda_2$, our choices of $\Lambda$ and the calculation of the right hand side of \eqref{FinalSetup} in Table \ref{Table:ResultsTable}.

We see that for each range of $\lambda_1$ we obtain an upper bound for \eqref{FinalSetup} which is $<0.99$. Since the expression is decreasing in $M$, this holds for all $M\ge 7.8$. We have therefore established Proposition \ref{Prpstn:OverallResult} by taking $\epsilon=10^{-3}$.

\begin{table}[t]\label{Table:ResultsTable}
\begin{center}
\caption{Calculation of the RHS of \eqref{FinalSetup} for different ranges of $\lambda_1$.}
\begin{tabular}{|c|c|c|c|c|c|}
\hline
$\lambda_{11}$ & $\lambda_{12}$ & $\lambda_{21}$ & $\lambda_{11}'$ & $\Lambda$ & Total RHS of \eqref{FinalSetup}\\
\hline
0.35 & 0.40 & 1.29 & 2.10 & 1.29 & 0.8579...\\
0.40 & 0.44 & 1.18 & 2.03 & 1.27 & 0.9821...\\
0.44 & 0.46 & 1.08 & 1.66 & 1.28 & 0.9213...\\
0.46 & 0.48 & 1.08 & 1.53 & 1.28 & 0.9120...\\
0.48 & 0.50 & 1.08 & 1.47 & 1.28 & 0.9041...\\
0.50 & 0.52 & 1.00 & 1.40 & 1.28 & 0.9304...\\
0.52 & 0.54 & 1.00 & 1.34 & 1.31 & 0.8049...\\
0.54 & 0.56 & 0.92 & 1.28 & 1.31 & 0.8427...\\
0.56 & 0.58 & 0.92 & 1.23 & 1.31 & 0.8385...\\
0.58 & 0.60 & 0.92 & 1.18 & 1.31 & 0.8349...\\
0.60 & 0.62 & 0.85 & 1.13 & 1.34 & 0.7782...\\
0.62 & 0.64 & 0.85 & 1.09 & 1.34 & 0.7756...\\
0.64 & 0.66 & 0.79 & 1.04 & 1.34 & 0.8363...\\
0.66 & 0.68 & 0.79 & 1.00 & 1.36 & 0.7652...\\
0.68 & 0.70 & 0.79 & 0.96 & 1.36 & 0.7636...\\
0.70 & 0.72 & 0.745 & 0.93 & 1.36 & 0.8241...\\
0.72 & 0.74 & 0.745 & 0.91 & 1.36 & 0.8229...\\
0.74 & 0.76 & 0.745 & 0.89 & 1.36 & 0.8219...\\
0.76 & 0.78 & 0.76 & 0.86 & 1.36 & 0.7988...\\
0.78 & 0.80 & 0.78 & 0.84 & 1.36 & 0.7708..\\
0.80 & 0.82 & 0.80 & 0.83 & 1.36 & 0.7463...\\
0.82 & 0.86 & 0.82 & 0.827 & 1.36 & 0.7243...\\
0.86 & $\infty$ & 0.86 & 0.86 & 1.44 & 0.5110...\\
\hline
\end{tabular}
\end{center}
\end{table}
\section{Acknowledgment}
I would like to thank my supervisor, Prof. Heath-Brown, for suggesting this problem, for providing a huge number of helpful comments and for his encouragement.
\bibliographystyle{acm}
\bibliography{bibliography}
\end{document}